\newcommand{\br}{\mathbb{R}}
\newcommand{\varep}{\varepsilon}
\newtheorem{thm}{Theorem}[section]
\newtheorem{lemma}[thm]{Lemma}
\newtheorem{prop}[thm]{Proposition}
\newtheorem{definition}[thm]{Definition}
\newtheorem{remark}[thm]{Remark}
\numberwithin{equation}{section}
\begin{document}

\bibliographystyle{amsplain}

\title{Layer Potential Methods \\
for Elliptic Homogenization Problems}

\author{Carlos E. Kenig\thanks{Supported in part by NSF grant DMS-0456583}
 \and Zhongwei Shen\thanks{Supported in part by NSF grant DMS-0855294}}

\date{ }

\maketitle

\begin{abstract}

In this paper we use the method of layer potentials to study $L^2$
boundary value problems in a bounded Lipschitz domain $\Omega$
for a family of second order elliptic systems with rapidly oscillating periodic
coefficients, arising in the
theory of homogenization.
Let $\mathcal{L}_\varep=-\text{div}\big(A(\varep^{-1}X)\nabla \big)$.
Under the assumption that $A(X)$ is elliptic, symmetric, periodic and
H\"older continuous, we establish the solvability of the
$L^2$ Dirichlet, regularity, and Neumann problems for $\mathcal{L}_\varep
(u_\varep)=0$ in $\Omega$ with optimal
estimates uniform in $\varep>0$.

\end{abstract}

\section{Introduction}

This paper continues the study in \cite{kenig-shen-1}
of elliptic homogenization problems in 
Lipschitz domains. 
Let $\Omega$ be a bounded Lipschitz domain in $\mathbb{R}^{d}$, $d\ge 3$.
Consider a family of
second order elliptic systems $\mathcal{L}_\varepsilon (u_\varepsilon) =0$
in $\Omega$, where $u_\varepsilon
=(u_\varepsilon^1, \dots, u_\varepsilon^m)$ and
\begin{equation}
\mathcal{L}_\varepsilon
=-\frac{\partial}{\partial x_i}
\left[ a_{ij}^{\alpha\beta} \left(\frac{X}{\varepsilon}\right) \frac{\partial }{\partial x_j}\right]
=-\text{div}\left[ A\left(\frac{X}{\varepsilon}\right)\nabla \right],\quad
\quad \varepsilon>0.
\end{equation}
We will assume that the coefficient matrix 
$A(X)=(a_{ij}^{\alpha\beta} (X))$ 
is real and
satisfies the ellipticity condition, 
\begin{equation}\label{ellipticity}
 \mu |\xi|^2 \le a_{ij}^{\alpha\beta} (X) \xi_i^\alpha \xi_j^\beta \le \frac{1}{\mu} |\xi|^2 
\quad \text{ for }X\in \mathbb{R}^{d} \text{ and }  \xi=(\xi_i^\alpha)\in \mathbb{R}^{dm},
\end{equation}
where $\mu>0$,
and the periodicity condition,
\begin{equation}\label{periodicity}
A(X+Z)=A(X) \quad \text{ for } X\in \mathbb{R}^{d} \text{ and }
Z\in \mathbb{Z}^{d}.
\end{equation}
We shall also impose the smoothness condition,
\begin{equation}\label{smoothness}
| A(X)-A(Y)| \le \tau|X-Y|^\lambda
\quad \text{ for some } \lambda\in (0,1) \text{ and } \tau\ge 0,
\end{equation}
and the symmetry condition $A^*=A$, i.e.,
\begin{equation}\label{symmetry}
a_{ij}^{\alpha\beta} (X) =a_{ji}^{\beta\alpha} (X)  \quad \text{ for } 1\le i,j\le d
\text{ and } 1\le \alpha, \beta \le m.
\end{equation}
Under these conditions, we establish the solvability of
 the $L^2$ Dirichlet, regularity and Neumann problems
for $\mathcal{L}_\varep (u_\varep)=0$ in $\Omega$ with optimal estimates
that are uniform in the parameter $\varep>0$.

We say $A\in \Lambda (\mu, \lambda,\tau)$ if it satisfies conditions 
(\ref{ellipticity}), (\ref{periodicity}) and (\ref{smoothness}). 
The following are the main results of this paper.

\begin{thm}\label{e-Dirichlet-theorem}
Let $\Omega$ be a bounded Lipschitz domain in $\mathbb{R}^d$, $d\ge 3$
with connected boundary.
Let $\mathcal{L}_\varep =-\text{div}\big(A(\varep^{-1}X)\nabla \big)$
with $A\in \Lambda (\mu,\lambda,\tau)$ and $A^*=A$.
Then for any $f\in L^2(\partial\Omega, \mathbb{R}^m)$, 
there exists a unique $u_\varep$ such that
$\mathcal{L}_\varep (u_\varep)=0$ in $\Omega$,
$(u_\varep)^*\in L^2(\partial\Omega)$ and
$u_\varep=f$ n.t. on $\partial\Omega$.
Moreover, the solution $u_\varep$ satisfies the
estimate $\|(u_\varep)^*\|_2 \le C\| f\|_2$ with 
constant $C$ independent of $\varep>0$.
Furthermore, $u_\varep$ may be represented by a double layer potential
with density $g_\varep\in L^2(\partial\Omega, \mathbb{R}^m)$ and
$\| g_\varep\|_2 \le C\| f\|_2$.
\end{thm}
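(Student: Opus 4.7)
The plan is to reduce the $L^2$ Dirichlet problem to an invertibility statement for a layer potential operator on $L^2(\partial\Omega)$, with every bound uniform in $\varep$. First I would invoke the matrix-valued fundamental solution $\Gamma_\varep(X,Y)$ for $\mathcal{L}_\varep$, whose pointwise and first/second derivative estimates uniform in $\varep$ are available from the Avellaneda--Lin theory (and as developed in \cite{kenig-shen-1}); the H\"older hypothesis on $A$ is what makes these sharp. Using $\Gamma_\varep$ I would define the double layer potential
$$\mathcal{D}_\varep g(X)=\int_{\partial\Omega} \bigl(\partial_{\nu(Y)}^{*} \Gamma_\varep(Y,X)\bigr)^T g(Y)\, d\sigma(Y), \qquad X\notin\partial\Omega,$$
where $\partial_\nu^{*}$ is the conormal derivative associated to the adjoint coefficients. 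The first technical step is to establish the uniform nontangential bound $\|(\mathcal{D}_\varep g)^*\|_{L^2(\partial\Omega)}\le C\|g\|_{L^2(\partial\Omega)}$ together with the jump relation $\mathcal{D}_\varep g\big|_{+}=(\tfrac12 I+K_\varep)g$ nontangentially. Boundedness comes from a $T1$/Calder\'on--Zygmund argument on the Lipschitz graph, using the uniform Calder\'on--Zygmund kernel estimates for $\nabla_Y\Gamma_\varep$.

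With boundedness, everything reduces to showing that $\tfrac12 I+K_\varep$ is invertible on $L^2(\partial\Omega,\br^m)$ with the operator norm of the inverse independent of $\varep$. I would split into two regimes according to whether $\varep\gtrsim \operatorname{diam}(\Omega)$ or $\varep\ll \operatorname{diam}(\Omega)$. In the large-$\varep$ regime the rescaling $X\mapsto \varep^{-1}X$ converts the problem into one for a fixed, H\"older continuous, non-oscillating coefficient system on a family of Lipschitz domains of bounded diameter, where invertibility is classical via Rellich identities and the Fredholm alternative (and the symmetry $A^{*}=A$ ensures the Rellich estimate applies). In the small-$\varep$ regime I would argue by a compactness/approximation method: one shows $K_\varep \to K_0$ in a sufficiently strong sense as $\varep\to 0^{+}$, where $K_0$ is the layer potential for the homogenized constant-coefficient operator $\mathcal{L}_0=-\diverg(A^0\nabla)$, and $\tfrac12 I+K_0$ is invertible on $L^2(\partial\Omega)$ by the Dahlberg--Kenig--Verchota theory for symmetric constant-coefficient systems.

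The main obstacle is the last step: one needs a quantitative convergence $K_\varep\to K_0$ strong enough to transfer invertibility, combined with a uniform Rellich-type identity for $\mathcal{L}_\varep$ that lets one close the loop and preclude the inverse norm from blowing up along a subsequence. The symmetry $A^{*}=A$ is essential for the Rellich step, and the H\"older regularity of $A$ enters through the mixed derivative estimate on $\Gamma_\varep$ needed for singular integral boundedness and for the comparison with $\Gamma_0$. Once $\tfrac12 I+K_\varep$ is invertible uniformly, setting $g_\varep=(\tfrac12 I+K_\varep)^{-1}f$ and $u_\varep=\mathcal{D}_\varep g_\varep$ yields simultaneously $\|g_\varep\|_2\le C\|f\|_2$ and, through the maximal bound on $\mathcal{D}_\varep$, $\|(u_\varep)^{*}\|_2\le C\|f\|_2$; uniqueness follows from the standard argument that an $L^2$ Dirichlet solution with zero boundary data and $(u_\varep)^{*}\in L^2(\partial\Omega)$ must vanish, via Green's representation and the adjoint regularity problem.
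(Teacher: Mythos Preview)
Your overall architecture---reduce to invertibility of a boundary layer operator with uniform bounds, via Calder\'on--Zygmund estimates on the fundamental solution plus Rellich identities---matches the paper. But the specific mechanism you propose for the small-$\varep$ regime has a real gap. The operator-norm convergence $K_\varep\to K_0$ that you want does \emph{not} hold. The Avellaneda--Lin asymptotics (Lemma~\ref{global-behavior-lemma} here) give, after rescaling,
\[
\nabla_Y\Gamma_\varep(X,Y)\approx\bigl(I+\nabla\chi^*(\varep^{-1}Y)\bigr)\nabla_Y\Gamma_{A_0}(X,Y),
\]
and the corrector factor $I+\nabla\chi^*(\varep^{-1}Y)$ is uniformly bounded but oscillates without settling down as $\varep\to 0$. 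So $K_\varep$ stays a bounded distance from $K_0$ in operator norm; you cannot perturb off the homogenized inverse. You do hedge by saying one also needs ``a uniform Rellich-type identity for $\mathcal{L}_\varep$'', but note that such an identity by itself already yields invertibility via the Verchota argument, making the $K_\varep\to K_0$ step redundant---and you have not indicated how to obtain the Rellich estimate uniformly.

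The paper avoids this entirely. It rescales once to $\varep=1$ (Remark~\ref{remark-rescaling}), so the task becomes proving Rellich estimates for $\mathcal{L}=\mathcal{L}_1$ on Lipschitz domains of \emph{arbitrary} diameter with constants depending only on the Lipschitz character. For small scales $r\le 1$ this is done by a three-step approximation through coefficients smooth off $\partial\Omega$ (Sections~6--7). For large scales $r>1$ the paper introduces a genuinely new Rellich-type identity based on the discrete difference $Q(u)(x',x_d)=u(x',x_d+1)-u(x',x_d)$: periodicity of $A$ makes $Q(u)$ a solution whenever $u$ is, and integrating $\tfrac{\partial u}{\partial\nu}\cdot Q(u)$ by parts controls $\int|\nabla u|^2$ over a unit boundary layer (Lemmas~\ref{Neumann-Rellich}--\ref{regularity-Rellich}). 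This is the substitute for the missing $\partial_{x_d}$-based Rellich identity when coefficients are merely periodic rather than constant. Finally, the passage from $C^2$ to H\"older coefficients is done by approximating $A$ (not $\varep$), using Theorem~\ref{operator-approximation-theorem} to transfer the uniform inverse bounds. Your large-$\varep$ regime is essentially the paper's small-scale argument, but the heart of the matter---uniformity as $\varep\to 0$, equivalently large scales for $\mathcal{L}_1$---requires the $Q$-operator idea, not homogenization of the layer potential.
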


Here $(u_\varep)^*$ denotes the usual nontangential maximal function of $u_\varep$ and
$\|\cdot\|_p$ the norm in $L^p(\partial\Omega)$.
By $u_\varep=f$ n.t. on $\partial\Omega$, 
we mean that $u$ converges to $f$ nontangentially.

\begin{thm}\label{e-regularity-theorem}
Suppose that $\Omega$ and $A$ satisfy the same conditions
as in Theorem \ref{e-Dirichlet-theorem}. Then
for any $f\in W^{1,2}(\partial\Omega, \mathbb{R}^m)$, there exists
a unique $u_\varep$ such that
$\mathcal{L}_\varep (u_\varep)=0$ in $\Omega$,
$(\nabla u_\varep)^*\in L^2 (\partial\Omega)$ and
$u_\varep=f $ n.t. on $\partial\Omega$.
Moreover, the solution $u_\varep$ satisfies the
estimate $\|(\nabla u_\varep)^*\|_2 \le C \|\nabla_{tan} f \|_2$
with $C$ independent of $\varep>0$.
Furthermore, $\nabla u_\varep$ exists n.t. on $\partial\Omega$ and
$u_\varep$ may be represented by a single layer potential
with density $g_\varep\in L^2(\partial\Omega, \mathbb{R}^m)$
and 
$
\| g_\varep \|_2 \le C \{ \|\nabla_{tan} f\|_2 
+[\sigma(\partial\Omega)]^{\frac{1}{1-d}} \|f\|_2\}
$.
\end{thm}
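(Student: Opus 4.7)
I would represent the solution as a single layer potential $u_\varep = \mathcal{S}_\varep g_\varep$, where $\mathcal{S}_\varep$ is constructed from the matrix fundamental solution $\Gamma_\varep$ of $\mathcal{L}_\varep$, and reduce existence to inverting $\mathcal{S}_\varep$ on the boundary. From the companion paper \cite{kenig-shen-1} I would take as given the uniform nontangential bound $\|(\nabla \mathcal{S}_\varep g)^*\|_2 \le C \|g\|_2$ (with $C$ independent of $\varep$), the nontangential existence of $\nabla\mathcal{S}_\varep g$ on $\partial\Omega$, and the jump relations $\partial_{\nu}^{A_\varep} \mathcal{S}_\varep g|_\pm = (\mp \tfrac{1}{2} I + \mathcal{K}_\varep^*) g$. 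With those standard layer-potential inputs in place, the entire theorem reduces to showing that $\mathcal{S}_\varep : L^2(\partial\Omega,\mathbb{R}^m) \to W^{1,2}(\partial\Omega,\mathbb{R}^m)$ is an isomorphism onto its natural range, with the norm of the inverse bounded uniformly in $\varep$.

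\textbf{Rellich-type lower bound.} The symmetry hypothesis $A^*=A$ is used next to derive a Rellich identity for $\mathcal{L}_\varep$-solutions: integrating $\partial_k(\langle A_\varep \nabla v, \nabla v\rangle h_k)$ over $\Omega$ against a smooth vector field $h$ transverse to $\partial\Omega$, and using ellipticity (\ref{ellipticity}), produces the equivalence $\|\partial_{\nu}^{A_\varep} v\|_2 \approx \|\nabla_{tan} v\|_2$ for any $v$ with $\mathcal{L}_\varep v = 0$ and $(\nabla v)^* \in L^2$, the constants depending only on $\mu$ and the Lipschitz character of $\partial\Omega$. Applying this identity in $\Omega$ and in $\mathbb{R}^d \setminus \bar\Omega$ to $v = \mathcal{S}_\varep g$, and using the continuity of $\nabla_{tan}\mathcal{S}_\varep g$ across $\partial\Omega$, I would obtain
\[
\|g\|_2 \le \|(\tfrac12 I + \mathcal{K}_\varep^*)g\|_2 + \|(-\tfrac12 I + \mathcal{K}_\varep^*)g\|_2 \le C \|\nabla_{tan}\mathcal{S}_\varep g\|_2,
\]
which gives injectivity, closed range, and a uniform lower bound for $\mathcal{S}_\varep$.

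\textbf{Surjectivity and the density estimate.} For the homogenized operator $\mathcal{L}_0$ (constant coefficient, elliptic, symmetric), Verchota's theorem for systems gives that $\mathcal{S}_0$ is an isomorphism from $L^2$ onto the appropriate subspace of $W^{1,2}$. I would transfer this invertibility to $\mathcal{L}_\varep$ by a continuity / Fredholm-index argument along a homotopy connecting $A(X/\varep)$ to the homogenized matrix $\widehat A$, using the lower bound of the previous step to maintain closed range throughout. The quantitative input needed — that the comparison of $\Gamma_\varep$ with $\Gamma_0$ plus correctors is strong enough to preserve operator-norm continuity — is where the H\"older condition (\ref{smoothness}) enters, and is also what produces the normalization term $[\sigma(\partial\Omega)]^{\frac{1}{1-d}}\|f\|_2$ in the density estimate, coming from the decay of $\Gamma_\varep$ at infinity and fixing the additive constant in the reconstruction of $g_\varep$ from $\nabla_{tan} f$.

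\textbf{Main obstacle.} The Rellich step, although delicate in keeping its constants independent of $\varep$, is standard in spirit once $A^*=A$ is exploited. The difficult part is the uniform surjectivity: transferring invertibility from $\varep = 0$ to all $\varep>0$ requires that the homogenization asymptotics for $\Gamma_\varep$ be sharp enough that the family $\varep \mapsto \mathcal{S}_\varep$ (and $\mathcal{K}_\varep^*$) is operator-norm continuous modulo a compact error. This quantitative homogenization for layer potentials on Lipschitz boundaries is the main analytic ingredient I would expect to invoke from the preparatory results in \cite{kenig-shen-1}. Uniqueness of $u_\varep$, by contrast, follows from the same Rellich estimate applied inside $\Omega$ together with standard energy arguments.
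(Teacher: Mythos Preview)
Your architecture---represent $u_\varep$ as a single layer potential and invert $\mathcal{S}_\varep$ via Rellich estimates plus a continuity argument---matches the paper, but the Rellich step as you describe it fails, and fixing it is the whole point of the paper. The classical Rellich identity (integrate $\partial_k(h_k\langle A_\varep\nabla v,\nabla v\rangle)$ by parts) produces an interior error $\int_\Omega |h||\nabla A_\varep||\nabla v|^2\,dX$; see (\ref{local-Rellich}). Since $A_\varep(X)=A(X/\varep)$, one has $|\nabla A_\varep|\sim\varep^{-1}$, so the constant blows up as $\varep\to 0$; for merely H\"older $A$ the identity does not even make sense. Your claim that the constants depend only on $\mu$ and the Lipschitz character is exactly what has to be \emph{proved}, and the naive identity does not give it.

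The paper instead rescales to $\varep=1$ and proves Rellich estimates with constants independent of $\mathrm{diam}(\Omega)$, splitting into two regimes. For small scales it replaces $A$ near $\partial\Omega$ by its harmonic extension $\bar A$ off $\partial\Omega$, which satisfies $|\nabla\bar A(X)|\le C\,\mathrm{dist}(X,\partial\Omega)^{\lambda_0-1}$, so the Rellich error is controllable; a three-step approximation then transfers invertibility back to $A$ (Sections 6--7). For large scales the classical identity is abandoned: instead of $\partial_{x_d}u$ one uses the finite difference $Q(u)(x',x_d)=u(x',x_d+1)-u(x',x_d)$, which is again a solution by periodicity, and derives new identities (Lemmas \ref{Neumann-Rellich}--\ref{regularity-Rellich}) that involve no derivative of $A$ at all. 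This is where periodicity is genuinely used, and your proposal never invokes it. A smaller discrepancy: the continuity argument homotopes $A$ to the identity via $sA+(1-s)I$ and uses Theorem \ref{operator-approximation-theorem}, not a homotopy to the homogenized operator; the homogenization asymptotics for $\Gamma_\varep$ are used for the uniform $L^p$ boundedness of the layer potentials (Section 3), not for the invertibility step.
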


Let $\frac{\partial u}{\partial\nu_\varep}$ denote the conormal
derivative associated with the operator $\mathcal{L}_\varep$.
We will use $L^p_0(\partial\Omega, \mathbb{R}^m)$
to denote the subspace of functions in $L^p(\partial\Omega, \mathbb{R}^m)$
with mean value zero.

\begin{thm}\label{e-Neumann-theorem}
Suppose that $\Omega$ and $A$ satisfy the same conditions 
as in Theorem \ref{e-Dirichlet-theorem}.
Then for any $f\in L_0^2(\partial\Omega, \mathbb{R}^m)$,
there exists a $u_\varep$, unique up to constants, such that
$\mathcal{L}_\varep (u_\varep)=0$ in $\Omega$,
$(\nabla u_\varep )^*\in L^2 (\partial\Omega)$ and
$\frac{\partial u_\varep}{\partial\nu_\varep} =f$ 
n.t. on $\partial\Omega$.
Moreover, the solution $u_\varep$ satisfies the estimate
$\|(\nabla u_\varep)^*\|_2 \le C\| f\|_2$ with $C$ independent of $\varep>0$.
Furthermore, $\nabla u_\varep$ exists n.t. on $\partial\Omega$
and $u_\varep$ may be represented 
by a single layer potential with density $g_\varep\in L_0^2 (\partial\Omega,
\mathbb{R}^m)$ and $\| g_\varep\|_2 \le C \| f\|_2$.
\end{thm}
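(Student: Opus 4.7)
The plan is to represent the solution as a single layer potential $u_\varep = \mathcal{S}_\varep(g_\varep)$ for a density $g_\varep \in L^2_0(\partial\Omega,\mathbb{R}^m)$, and reduce Theorem~\ref{e-Neumann-theorem} to the $\varep$-uniform invertibility of the appropriate boundary integral operator $\pm \tfrac{1}{2}I + \mathcal{K}_\varep^*$ on $L^2_0(\partial\Omega,\mathbb{R}^m)$. This operator is produced by the interior/exterior jump formula for the conormal derivative of the single layer,
\[
\frac{\partial}{\partial \nu_\varep} \mathcal{S}_\varep(g)\bigg|_{\pm} = \bigl(\pm \tfrac{1}{2}I + \mathcal{K}_\varep^*\bigr) g \quad \text{n.t.\ on } \partial\Omega,
\]
so that choosing $g_\varep$ to solve the corresponding boundary equation yields a solution with the prescribed Neumann data.

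Three ingredients enter. First, the mapping properties of $\mathcal{S}_\varep$: the uniform bound $\|(\nabla \mathcal{S}_\varep(g))^*\|_2 \le C\|g\|_2$, the n.t.\ existence of $\nabla \mathcal{S}_\varep(g)$ on $\partial\Omega$, and the jump relation above. These are Calder\'on--Coifman--McIntosh--Meyer-type estimates adapted to $\mathcal{L}_\varep$, holding with constants independent of $\varep$ because the fundamental solution of $\mathcal{L}_\varep$ satisfies size and H\"older bounds uniform in $\varep$ (consequences of (\ref{smoothness}) and the Avellaneda--Lin regularity theory). Second, a $\varep$-uniform Rellich-type identity for the symmetric operator $\mathcal{L}_\varep$, yielding
\[
\|\nabla u_\varep\|_{L^2(\partial\Omega)} \;\approx\; \Bigl\|\tfrac{\partial u_\varep}{\partial \nu_\varep}\Bigr\|_{L^2(\partial\Omega)} + \|\nabla_{\tan} u_\varep\|_{L^2(\partial\Omega)}
\]
for any $u_\varep$ with $\mathcal{L}_\varep u_\varep = 0$ in $\Omega$ and $(\nabla u_\varep)^* \in L^2(\partial\Omega)$, with constants independent of $\varep$; the symmetry condition $A^* = A$ is essential here. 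Third, the invertibility on $L^2_0$ of the homogenized boundary operator $\pm \tfrac{1}{2}I + \mathcal{K}_0^*$ for the constant-coefficient symmetric system, which is classical (Fabes--Kenig--Verchota, Dahlberg--Kenig--Verchota).

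With these in hand, applying the Rellich equivalence to $u_\varep = \mathcal{S}_\varep(g_\varep)$ and invoking the jump relation produces the lower bound
\[
\|g_\varep\|_2 \le C \,\bigl\|(\pm \tfrac{1}{2}I + \mathcal{K}_\varep^*) g_\varep \bigr\|_2
\]
on $L^2_0(\partial\Omega,\mathbb{R}^m)$, so that $\pm \tfrac{1}{2}I + \mathcal{K}_\varep^*$ is injective with closed range, uniformly in $\varep$. Surjectivity onto $L^2_0$ follows either by duality (the analogous Rellich argument on the Dirichlet/regularity side, as in Theorems~\ref{e-Dirichlet-theorem}--\ref{e-regularity-theorem}, gives the same bound for the adjoint $\pm \tfrac{1}{2}I + \mathcal{K}_\varep$) or by a continuity-in-$\varep$ argument starting from the known invertibility at $\varep = 0$. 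Setting $g_\varep := (\pm \tfrac{1}{2}I + \mathcal{K}_\varep^*)^{-1} f$ and $u_\varep := \mathcal{S}_\varep(g_\varep)$ then furnishes a solution satisfying $\|g_\varep\|_2 \le C\|f\|_2$ and $\|(\nabla u_\varep)^*\|_2 \le C\|f\|_2$; uniqueness modulo constants comes from the Rellich estimate applied to the difference of two solutions with the same Neumann data.

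The main obstacle, and the genuinely new content of the paper, is the $\varep$-uniform Rellich identity. The naive Rellich computation for $\mathcal{L}_\varep$ generates commutator terms containing $(\partial_k A)(X/\varep)$, which are of size $\varep^{-1}$ and resist any direct absorption. Overcoming this requires comparison with the (constant-coefficient) homogenized operator $\mathcal{L}_0$, combined with sharp corrector and boundary-layer estimates of the type developed in \cite{kenig-shen-1}. Once that key ingredient is available, the remainder of the argument --- invertibility of $\pm \tfrac{1}{2}I + \mathcal{K}_\varep^*$ and the construction of $u_\varep$ --- is largely formal, following the Verchota scheme for constant-coefficient elliptic systems on Lipschitz domains.
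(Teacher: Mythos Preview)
Your overall Verchota-style strategy---single layer representation, jump relations, Rellich estimates to get injectivity with closed range, then an index/continuity argument for surjectivity---is correct and matches the paper's architecture. Two points, however, deserve comment.

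First, your suggested ``continuity-in-$\varep$'' route to surjectivity does not work. The kernel of $\mathcal{K}_\varep$ involves $\nabla_X\Gamma_\varep(X,Y)$, and by Lemma~\ref{global-behavior-lemma} the gradient of the fundamental solution differs from that of the homogenized operator by the oscillatory corrector term $\nabla\chi(X/\varep)\nabla\Gamma_{A_0}$, which does \emph{not} converge as $\varep\to 0$. Hence $\mathcal{K}_\varep$ does not converge to $\mathcal{K}_0$ in operator norm, and one cannot deform through $\varep$ to reach the constant-coefficient case. The paper instead rescales to $\varep=1$ (Remark~\ref{remark-rescaling}), so the issue becomes proving estimates uniform in the \emph{diameter} of the rescaled domain; surjectivity is then obtained by a continuity argument in the coefficient matrix along the homotopy $A^s=sA+(1-s)I$ connecting $A$ to the Laplacian (Lemma~\ref{invertibility-of-Neumann-operator}), using Theorem~\ref{operator-approximation-theorem} for the norm continuity of $\mathcal{K}_{A^s}$ in $s$.

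Second, your sketch of the uniform Rellich estimate (``comparison with $\mathcal{L}_0$, corrector and boundary-layer estimates'') is not quite how the paper proceeds. After rescaling, the small-scale case $r\le 1$ is handled by a three-step approximation of $A$ by matrices satisfying a controlled blow-up condition near $\partial\Omega$ (Sections~6--7), while the large-scale case $r>1$ exploits periodicity directly via the difference operator $Q(u)(x',x_d)=u(x',x_d+1)-u(x',x_d)$, which commutes with $\mathcal{L}$ and replaces the role of $\partial_{x_d}$ in the classical Rellich identity (Section~8). Corrector asymptotics for $\Gamma_A$ are used only for the \emph{boundedness} of the layer potentials (Sections~2--3), not for the Rellich step itself.
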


A few remarks are in order.

\begin{remark}\label{remark-1.1}
{\rm 
In the case of $m=1$, the $L^p$  Dirichlet problem  
for $\mathcal{L}_\varep (u_\varep)=0$ in Lipschitz domains
with uniform estimate $\|(u_\varep)^*\|_p\le C\| u_\varep\|_p$
was solved for $2-\delta<p<\infty$
by Dahlberg \cite{dahlberg-personal}, who extended an earlier work
of Avellaneda and Lin \cite{AL-1987-ho} for domains satisfying the uniform
exterior ball condition.
Recently the authors initiated the study of
the $L^p$ Neumann and regularity problems for 
$\mathcal{L}_\varep (u_\varep)=0$
with uniform estimates on $\|(\nabla u_\varep)^*\|_p$ in \cite{kenig-shen-1} .
Under the assumption that $A$ is elliptic, symmetric, periodic and satisfies
a certain square-Dini condition, we solve the $L^p$ Neumann and regularity problems
in Lipschitz domains for the sharp range $1<p<2+\delta$.
A new proof of Dahlberg's theorem on the Dirichlet problem
is also given in \cite{kenig-shen-1}. 
We mention that $L^p$ Neumann and regularity problems for 
a general second order elliptic equation
were formulated and studied in \cite{KP-1993, KP-1995} (see \cite{Kenig-1994}
for references on related work on the $L^p$ boundary value problems with minimal
smoothness assumptions). 
}
\end{remark}

\begin{remark}\label{remark-layer-potential}
{\rm
Theorems \ref{e-Dirichlet-theorem}, \ref{e-regularity-theorem} and \ref{e-Neumann-theorem}
extend the analogous results for the second order elliptic systems with constant
coefficients satisfying (\ref{ellipticity})
in Lipschitz domains \cite{Verchota-1984,
dahlberg3, fabes2, fabes, Gao-1991}
(in the constant coefficient case, some results
also hold when (\ref{ellipticity}) is relaxed to the so-called
Legendre-Hadamard ellipticity condition; see \cite{dahlberg3, Verchota-1986, Gao-1991}).
As in the case of elliptic systems with constant coefficients,
our results for elliptic systems with periodic coefficients
 are established by the method of layer potentials -the classical method
of integral equations.
We point out that the use of layer potentials in the periodic setting relies on
two crucial developments.
The first one is the proof of Coifman-McIntosh-Meyer \cite{coifman}
of the $L^p$ boundedness of the Cauchy integrals on Lipschitz curves.
By the method of rotation this gives the $L^p$ boundedness of layer
potentials on Lipschitz surfaces
for elliptic systems with constants coefficients.
Using the method of freezing coefficients,
one also obtains the $L^p$ boundedness of layer potentials
on Lipschitz surfaces, in small scales, 
for systems with variable coefficients
(see e.g. \cite{Mitrea-Taylor-1999, Mitrea-Taylor-2000}
on the use of layer potentials in the study of boundary
value problems for the Laplace-Beltrami operator on Lipschitz
sub-domains of Riemannian manifolds).
However, to treat Lipschitz surfaces for large scales,
for operators with periodic coefficients, 
we need to appeal to the work of
Avellaneda and Lin on elliptic homogenization problems.
In a series of remarkable papers \cite{AL-1987, AL-1987-ho, 
AL-1989-ho, AL-1989-II, AL-1991}, Avellaneda and Lin
established the uniform $C^{0,\alpha}$, $C^{0,1}$, $L^p$ estimates
for the family of operators $\{ \mathcal{L}_\varep\}$
in smooth domains.
In \cite{AL-1991} they also obtained certain decay estimates 
for large scales on the matrix
of fundamental solutions of $\mathcal{L}_\varep$.
It is these estimates that enable us to show that 
$\|(\nabla u_\varep)^*\|_p \le C \| g\|_p$, if $u_\varep$ is given by
the single layer potential for the operator $\mathcal{L}_\varep$
with density $g$ (see Sections 2 and 3).
}
\end{remark}

\begin{remark}\label{remark-rescaling}
{\rm
Note that the estimates in terms of nontangential maximal functions,
\begin{equation}\label{nontangential-estimate}
\|(u_\varep)^*\|_2 \le C\| u_\varep \|_2,\quad
\|(\nabla u_\varep )^*\|_2 \le C\| \nabla_{tan} u_\varep\|_2,\quad
\|(\nabla u_\varep )^*\|_2 \le C \|\frac{\partial u_\varep }{\partial\nu_\varep }\|_2
\end{equation}
in Theorems \ref{e-Dirichlet-theorem},
\ref{e-regularity-theorem} and \ref{e-Neumann-theorem} are scale-invariant.
Thus by a simple rescaling argument,
 one may reduce the proof of Theorems \ref{e-Dirichlet-theorem},
\ref{e-regularity-theorem} and \ref{e-Neumann-theorem}
to the case $\varep=1$, provided that in this special case one can show that
the constant $C$ in (\ref{nontangential-estimate}) depends only
on $d$, $m$, $\mu$, $\lambda$, $\tau$ and the Lipschitz character of $\Omega$.
}
\end{remark}

With the availability of the method of layer potentials and
following the approach for elliptic systems with constant coefficients in Lipschitz domains,
we are led to the Rellich estimates $\|\nabla u\|_2
\le C\| \nabla_{tan} u\|_2 $ and $\|\nabla u\|_2
\le C \|\frac{\partial u}{\partial \nu}\|_2$
for suitable solutions of $\mathcal{L}(u)=0$,
where $\mathcal{L}=\mathcal{L}_1$.
Let $\psi:\mathbb{R}^{d-1}\to \mathbb{R}$ be a Lipschitz function
such that $\psi(0)=0$ and $\|\nabla\psi\|_\infty\le M$.
By localization techniques we may further reduce the problem to proving
 the following {\it a-priori} estimates for solutions of $\mathcal{L}(u)=0$ in $D(2r)$,
\begin{equation}\label{Rellich-estimate-1.1}
\aligned
&\int_{\Delta(r)} |\nabla u|^2\, d\sigma
\le C\int_{\Delta (2r)} \big|\frac{\partial u}{\partial \nu}\big|^2\, d\sigma
+\frac{C}{r}
\int_{D(2r)} |\nabla u|^2\, dX,\\
&\int_{\Delta(r)} |\nabla u|^2\, d\sigma
\le C\int_{\Delta (2r)} |\nabla_{tan} u |^2\, d\sigma
+\frac{C}{r}
\int_{D(2r)} |\nabla u|^2\, dX,
\endaligned
\end{equation}
where $\Delta(r)=\{ (x^\prime, \psi(x^\prime))\in \mathbb{R}^d: |x^\prime|<r\}$,
$ D(r)=\{ (x^\prime, x_d): |x^\prime|<r \text{ and }
\psi (x^\prime)<x_d< 10\sqrt{d}(M +1)r\}$, and
the constant $C$ depends only on $d$, $m$, $\mu$, $\lambda$, 
$\tau$ and $M$.

The proof of (\ref{Rellich-estimate-1.1}) is divided into two parts.
Part one deals with the small-scale case $0<r\le 1$.
We mention that in this case, if $A\in C^1(\mathbb{R}^d)$, the desired estimates
follow readily from the same Rellich-Nec\u{a}s-Payne-Weinberger formulas as in the case of
constant coefficients, with constant $C$ depending on $\|\nabla A\|_\infty$.
Our proof of (\ref{Rellich-estimate-1.1}) in small scales for H\"older continuous
coefficients, which also uses the Rellich formulas,
involves a delicate three-step approximation argument (see Sections 6 and 7).
This is needed to obtain the correct dependence 
on the constant $C$ in (\ref{Rellich-estimate-1.1}), in contrast to arguments in \cite{Mitrea-Taylor-2000}
where the dependence of the constants are not clear.
Part two, which is given in Section 8, treats the large-scale case $r>1$
and uses the periodicity assumption on $A$.
Here we first apply the small-scale estimates and reduce the problem to the control
of the integral of $|\nabla u|^2$ on a boundary layer $\{(x^\prime, x_d):
|x^\prime|<r \text{ and } \psi(x^\prime)< x_d< \psi(x^\prime)+1\}$.
The desired estimates of the integral over the boundary layer
follow from certain integral identities we developed in \cite{kenig-shen-1}
for elliptic operators with periodic coefficients.
These identities may be regarded as Rellich type identities
for operators with $x_d$-periodic coefficients.
We point out that in the case of constant coefficients, Rellich identities
are usually derived by using integration by parts
on some forms involving $\frac{\partial u}{\partial x_d}$.
The basic insight here is to replace the $x_d$
derivative of $u$ by the difference $Q(u)(x^\prime, x_d)
=u(x^\prime, x_d+1)-u(x^\prime, x_d)$.
The periodicity of $A$ is used in the fact that
$Q(u)$ is a solution whenever $u$ is a solution.
In \cite{kenig-shen-1}
the approach outlined above was used to solve the $L^p$
boundary value problems for elliptic equations
with periodic coefficients 
by the method of $\mathcal{L}$-harmonic measures
(see Remark \ref{remark-1.1}).
With the method
of layer potentials, the approach works equally well
for elliptic systems with periodic
coefficients, at least in the case $p=2$.
It is worth mentioning that the symmetry assumption (\ref{symmetry}),
although not needed for the uniform boundedness of layer potentials, 
is essential for the Rellich estimates both in small and large scales,
which are needed for the uniform invertibility.

By the stability of 
Fredholm properties of operators on a complex
interpolation scale, the $L^2$ results in Theorems \ref{e-Dirichlet-theorem},
\ref{e-regularity-theorem} and \ref{e-Neumann-theorem}
extend easily to the $L^p$ setting for $p\in (2-\delta, 2+\delta)$,
where $\delta>0$ depends only on $d$, $m$, $\mu$, $\lambda$, $\tau$
and the Lipschitz character of $\Omega$.
Using the $L^p$ techniques developed in \cite{Dahlberg-Kenig-1990,
Shen-2006-ne, Shen-2007-boundary} for constant coefficients,
we may further extend the results for the Dirichlet problem 
with $L^p$ boundary data to the range
$2< p\le\infty$ if $d=3$, and to $2<p<\frac{2(d-1)}{d-3}+\delta$ if $d\ge 4$.
Similarly, the $L^p$ Neumann and regularity problems for
$\mathcal{L}_\varep (u_\varep)=0$
may be solved for $1<p< 2$ if $d=3$, and for $\frac{2(d-1)}{d+1}-\delta
<p<2$ if $d\ge 4$.
These results, as well as uniform Sobolev and Besov estimates
for $\mathcal{L}_\varep$ in nonsmooth domains
(see \cite{Shen-2008} for uniform $W^{1,p}$ estimates in the case $m=1$)
will appear elsewhere.

We end this section with a few notations and definitions that will be used throughout
the paper.

For a ball $B=B(X,r)$ in $\mathbb{R}^d$ with center $X$ and
radius $r$, we will denote $B(X,tr)$ by $tB$.
If $0<\lambda<1$, then
$
\| f\|_{C^{0,\lambda}(\Omega)} = 
\inf \{M: |f(X)-f(Y)|\le M|X-Y|^\lambda \text{ for }X,Y\in \Omega\}$.
We will let
$\| f\|_{C^\lambda (\mathbb{R}^d)}=\| f\|_{L^\infty (\mathbb{R}^d)} +\| f \|_{C^{0, \lambda}(\mathbb{R}^d)}$.

Let $\Omega$ be a bounded Lipschtz domain.
We say $\Omega\in \mathbf{\Pi} (M, N)$ for some $M>0$ and $N>10$, if
there exist $r>0$ and $\{ P_i: i=1,\dots, N\}\subset \partial\Omega$
such that $\partial\Omega \subset \bigcup_i B(P_i, r)$ and for each $i$,
there exists a coordinate system, obtained from the standard Euclidean
system through translation and rotation, so that $P_i=(0,0)$ and
$$
B(P_i,C_M r)\cap\Omega =B(P_i,C_M r)\cap \big\{
(x^\prime, x_d)\in \mathbb{R}^d:\ x^\prime\in \mathbb{R}^{d-1}
\text{ and }
x_d>\psi_i (x^\prime)\big\},
$$
where $C_M =10(M+1)$,
 $\psi_i: \mathbb{R}^{d-1}\to \mathbb{R}$ is a Lipschitz function,
$\psi_i (0)=0$ and $\|\nabla\psi_i\|_\infty\le M$.
Note that if $\Omega\in \mathbf{\Pi} (M,N)$, then its dilation
$\varep \Omega =\{ \varep X: \ X\in \Omega\} \in \mathbf{\Pi} (M, N)$
for any $\varep>0$.
A constant $C$ is said to depend on the Lipschitz character of $\Omega$
if there exist $M$ and $N$ such that $\Omega \in \mathbf{\Pi}(M,N)$ and
the constant can be made uniform for any Lipschitz domain in $\mathbf{\Pi} (M,N)$.
We will call $C$ a ``good'' constant if it depends at most
on $d$, $m$, $\mu$ in (\ref{ellipticity}), $\lambda$ and $\tau$ in 
(\ref{smoothness}), and the Lipschitz character of $\Omega$.

Finally the summation convention will be used throughout this paper.

\section{Matrix of fundamental solutions}

Let $\mathcal{L}=\mathcal{L}^A =-\text{div}(A(X)\nabla)$.
Under the ellipticity condition (\ref{ellipticity}) and H\"older condition
(\ref{smoothness}) on $A(X)$,
it is well known that the gradients of weak solutions to
$\mathcal{L}(u)=\text{div}(f)$ are locally H\"older continuous, provided
that $f$ is H\"older continuous.
 More precisely,
let $B=B(X_0, R)$ for some $X_0\in \mathbb{R}^d$ and $0<R\le 1$.
There exists $C=C(d,m,\mu, \lambda, \tau)>0$ such that
if $u\in W^{1,2}(2B)$ is a weak solution to $\mathcal{L}(u)=\text{div}(f)$ in $2B$, then 
\begin{equation}\label{local-Holder-gradient-estimate}
 \|\nabla u\|_{C^{0, \lambda}(B)}
 \le \frac{C}{R^{1+\lambda}} 
\left\{\frac{1}{|2B|}
\int_{2B} |u|^2\, dX\right\}^{1/2}
+C\| f\|_{C^{0, \lambda} (2B)}
\end{equation}
(see e.g. \cite{Giaquinta}, p.88). Also, the gradient $\nabla u$ is locally bounded and
\begin{equation}\label{local-gradient-estimate}
\|\nabla u\|_{L^\infty(B)}
 \le \frac{C}{R} 
\left\{\frac{1}{|2B|}
\int_{2B} |u|^2\, dX\right\}^{1/2}
+C R^\lambda\| f\|_{C^{0, \lambda} (2B)}.
\end{equation}
If $R>1$, estimates (\ref{local-Holder-gradient-estimate})-(\ref{local-gradient-estimate})
 still hold. However the constant $C$ may depend on $R$.
With the additional periodicity condition (\ref{periodicity}), 
Avellaneda and Lin, among other things,
 were able to establish the following global
gradient estimate in \cite{AL-1987} (p. 826).

\begin{lemma}\label{gradient-estimate-lemma}
Let $B=B(X_0,R)$ for some $X_0\in \mathbb{R}^d$ and $R>0$.
Suppose that 
 $u\in W^{1,2}(2B)$ is a weak solution 
to $\text{div}(A\nabla u)=0$ in $2B$ for some $A\in \Lambda(\mu, \lambda, \tau)$.
Then 
\begin{equation}\label{gradient-estimate}
\sup_{B} |\nabla u|
\le\frac{C}{R}
\left\{ \frac{1}{|2B|}\int_{2B} |u|^2\, dX\right\}^{1/2},\end{equation}
where $C$ depends only on $d$, $m$, $\mu$, $\lambda$ and $\tau$.
\end{lemma}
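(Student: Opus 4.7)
By translation we may take $X_0 = 0$. Rescaling via $v(Y) := u(RY)$ for $Y\in B(0,2)$ turns $\mathcal{L}u = 0$ into $-\text{div}_Y\bigl(A(Y/\varepsilon)\nabla v\bigr) = 0$ in $B(0,2)$ with $\varepsilon := 1/R$, and the target estimate becomes the scale-invariant inequality
\[
\sup_{B(0,1)}|\nabla v| \le C\,\Bigl(\frac{1}{|B(0,2)|}\int_{B(0,2)} |v|^2\,dY\Bigr)^{1/2},
\]
with $C$ independent of $\varepsilon > 0$. When $\varepsilon \ge 1$ the rescaled matrix $A(\cdot/\varepsilon)$ has H\"older seminorm bounded by $\tau$, so the estimate follows at once from the Schauder bound (\ref{local-gradient-estimate}) with $f=0$. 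The substance of the lemma lies in the range $0<\varepsilon<1$, where the periodicity (\ref{periodicity}) must be used.

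For that range my plan is the compactness method of Avellaneda and Lin. The central step is a \emph{one-step improvement lemma}: there exist $\theta\in(0,1/4)$, $\varepsilon_0>0$ and $\sigma\in(0,1)$, depending only on the good parameters, such that for every $0<\varepsilon<\varepsilon_0$ and every weak solution $v$ of $\mathcal{L}_\varepsilon v = 0$ in $B(0,1)$ there is an affine function $\ell(Y) = a + b\cdot Y$ with $|a|+|b|\le C\|v\|_{L^2(B(0,1))}$ satisfying
\[
\Bigl(\frac{1}{|B(0,\theta)|}\int_{B(0,\theta)}|v-\ell|^2\Bigr)^{1/2}
\le \theta^{1+\sigma}\Bigl(\frac{1}{|B(0,1)|}\int_{B(0,1)}|v|^2\Bigr)^{1/2}.
\]
I would prove this by contradiction: if it fails along some $\varepsilon_k\to 0$ with $\|v_k\|_{L^2(B(0,1))}=1$, then Caccioppoli bounds $\{v_k\}$ in $H^1_{loc}$, and Rellich extracts a subsequence converging in $L^2_{loc}$ to some $v_0$. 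The div--curl argument of periodic homogenization (using (\ref{periodicity}) through the correctors) identifies $v_0$ as a weak solution of the constant-coefficient system $-\text{div}(\widehat{A}\nabla v_0) = 0$ with $\widehat{A}$ elliptic depending only on $\mu$. Then $v_0 \in C^\infty$, and its first-order Taylor polynomial at $0$ approximates it on $B(0,\theta)$ to within $C_0\theta^2$; choosing $\theta$ small so that $C_0\theta^2 < \tfrac12\theta^{1+\sigma}$ contradicts the assumed failure.

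The one-step lemma is iterated by setting $w(Y) := \theta^{-(1+\sigma)}[v(\theta Y) - \ell(\theta Y)]$, which solves $\mathcal{L}_{\varepsilon/\theta}\, w = 0$ in $B(0,1)$ with unit $L^2$ norm, so the step may be reapplied as long as $\varepsilon/\theta^k < \varepsilon_0$. The resulting affine approximants $\ell_k$ at scales $\theta^k$ have slopes satisfying $|b_{k+1}-b_k|\le C\theta^{k\sigma}$, hence converge, and produce the pointwise bound $|\nabla v(0)|\le C\|v\|_{L^2(B(0,1))}$. Translating the base point gives the estimate at every point of $B(0,1)$. The iteration halts at the first $k$ for which $\varepsilon/\theta^k\ge\varepsilon_0$; at that terminal scale a final application of the Schauder estimate (\ref{local-gradient-estimate}) on a ball of radius $\sim\theta^k$ (where the rescaled coefficients are again uniformly H\"older) closes the argument at still smaller scales.

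The main obstacle is the one-step lemma, and in particular the identification of the homogenized flux $\widehat{A}\nabla v_0$ as the weak $L^2$-limit of $A(\cdot/\varepsilon_k)\nabla v_k$; this is where the periodic correctors and the div--curl lemma are needed. The H\"older condition (\ref{smoothness}) is used only at the two Schauder steps, and the symmetry (\ref{symmetry}) plays no role here.
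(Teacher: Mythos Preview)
The paper does not prove this lemma; it quotes it from Avellaneda--Lin \cite{AL-1987} (p.~826). Your sketch is indeed an outline of their compactness method, and the overall architecture --- rescale to $\mathcal{L}_\varepsilon$, handle $\varepsilon\ge 1$ by Schauder, and for $\varepsilon<1$ prove a one-step $C^{1,\sigma}$ improvement by compactness and then iterate --- is the right one.

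There is, however, a genuine gap in your iteration. You assert that
\[
w(Y)=\theta^{-(1+\sigma)}\bigl[v(\theta Y)-\ell(\theta Y)\bigr]
\]
solves $\mathcal{L}_{\varepsilon/\theta}w=0$; it does not. The function $v(\theta\,\cdot)$ is a solution of $\mathcal{L}_{\varepsilon/\theta}$, but a pure affine $\ell(Y)=a+b\cdot Y$ is \emph{not}: one has $\mathcal{L}_\varepsilon(b\cdot Y)=-\varepsilon^{-1}\,b_j\,\partial_i a_{ij}(Y/\varepsilon)$, which vanishes only for constant coefficients. Consequently the one-step lemma, as you stated it with pure affines, cannot be reapplied to $w$.

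The remedy is exactly what Avellaneda--Lin do: replace the affine $\ell$ by the \emph{corrected} affine
\[
\ell_\varepsilon(Y)=a+b_j^\beta\bigl(Y_j e^\beta+\varepsilon\,\chi_j^\beta(Y/\varepsilon)\bigr),
\]
built from the correctors $\chi_j^\beta$ of (\ref{corector}), which satisfy $\mathcal{L}\bigl(x_j e^\beta+\chi_j^\beta\bigr)=0$. Then $v-\ell_\varepsilon$ is a genuine solution of $\mathcal{L}_\varepsilon$, and the iteration goes through. The one-step lemma itself must therefore be formulated with these corrected affines; the contradiction argument is unchanged, since $\varepsilon_k\chi(\cdot/\varepsilon_k)\to 0$ uniformly and the limit object is still the Taylor polynomial of the homogenized solution $v_0$. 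You already invoke the correctors in the div--curl step, so you have the ingredient; it simply must also appear in the subtraction at every scale. With this correction your sketch matches the argument in \cite{AL-1987}.
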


Using Lemma \ref{gradient-estimate-lemma}, one can construct a matrix-valued
function $\Gamma (X,Y)=(\Gamma^{\alpha\beta} (X,Y))_{m\times m}$   such that
for each $Y\in \br^d$, 
$\nabla_X \Gamma (X,Y)$ is locally integrable and
\begin{equation}\label{fundamental-solution-representation}
\phi^\gamma (Y)
=\int_{\br^d} a_{ij}^{\alpha\beta} (X) \frac{\partial}{\partial x_j}
\Gamma^{\beta\gamma} (X,Y) \frac{\partial }{\partial x_i} \phi^\alpha (X)\, dX
\end{equation}
for $\phi=(\phi^1, \dots, \phi^m)\in C_0^\infty(\br^d, \br^m)$.
Moreover, $\Gamma(X,Y)$ satisfies
the estimates
\begin{equation}\label{size-estimate}
\aligned
|\Gamma(X,Y)| & \le C|X-Y|^{2-d},\\
|\nabla_X \Gamma(X,Y)|+ |\nabla_Y \Gamma(X,Y)| &\le C|X-Y|^{1-d},
\endaligned
\end{equation}
where $C$ depends only on $d$, $m$, $\mu$, $\lambda$ and $\tau$
(see e.g. \cite{Hofmann-Kim}).
The function $\Gamma(X,Y)=\Gamma_A (X,Y)$ is
called the matrix of fundamental solutions
for the operator $\mathcal{L}$ in $\mathbb{R}^d$, with pole at $Y$. 
Note that 
\begin{equation}\label{adjoint-relation}
\big(\Gamma_A (X,Y)\big)^* =\Gamma_{A^*} (Y,X),
\end{equation}
where $A^*$ denotes the adjoint matrix of $A$.
Since $\nabla_Y \Gamma (\cdot, Y)$ is a weak solution in $\br^d\setminus \{ Y\}$,
we also have
\begin{equation}\label{2-derivative-estimate}
|\nabla_X\nabla_Y \Gamma(X,Y)|\le C|X-Y|^{-d}\quad
\text{ for any }X, Y\in \mathbb{R}^d, X\ne Y.
\end{equation}

If $E$ is a real constant matrix satisfying (\ref{ellipticity}), we will let
$\Theta (X,Y;E)$ to denote $\Gamma_E (X,Y)$, the matrix of fundamental solutions
for the operator $-\text{div}(E\nabla)$. 
Note that
$\Theta (X,Y;E)=\Theta(X-Y,0;E)=\Theta(Y-X,0; E)=\Theta(Y,X;E)$.
Also, $\Theta (X,0;E)$ is homogeneous of degree $2-d$ in $X$ and
\begin{equation}\label{constant-coefficient-estimate}
|\nabla^N_X \Theta (X,0;E)|\le C|X|^{2-d-N}
\end{equation}
for any $N\ge 0$, 
where $C$ depends only on $d$, $m$, $\mu$ and $N$
(see \cite{Morrey-1954}). Moreover, if $E, \widetilde{E}$
are two constant matrices satisfying (\ref{ellipticity}), 
then
\begin{equation}\label{constant-coefficient-difference-estimate}
|\nabla^N_X \Theta (X,0; E)-\nabla_X^N \Theta(X,0;\widetilde{E})
\le C \|E-\widetilde{E}\|\,  |X|^{2-d-N},
\end{equation}
where $C=C(d,m,\mu,N)>0$.
We remark that estimate (\ref{constant-coefficient-difference-estimate})
may be proved by an argument similar to that in the proof of
Lemma \ref{fundamental-difference-lemma}.

For a function $F=F(X,Y,Z)$, we will use the notation
 $$
\nabla_1 F(X,Y,Z)=\nabla_X F(X,Y,Z) \text{ and } \nabla_2 F(X,Y,Z)=\nabla_Y F(X,Y,Z).
$$
The following lemma describes the local behavior of $\Gamma_A (X,Y)$.

\begin{lemma}\label{local-behavior-lemma}
Let $A\in \Lambda (\mu, \lambda, \tau)$.
Then for
 any $X,Y\in \mathbb{R}^{d}$,
\begin{equation}\label{local-behavior-estimate-1}
\aligned
|\Gamma_A (X,Y)-\Theta (X, Y;A(X))| &\le C |X-Y|^{2-d+\lambda},\\
|\nabla_X \Gamma_A (X,Y)-\nabla_1 \Theta (X, Y; A(X))|
& \le C|X-Y|^{1-d+\lambda},\\
|\nabla_X \Gamma_A (X,Y)-\nabla_1 \Theta (X, Y; A(Y))|
& \le C|X-Y|^{1-d+\lambda},\\
\endaligned
\end{equation}
where $C>0$ depends only on $d$, $m$, $\mu$, $\lambda$ and $\tau$.
\end{lemma}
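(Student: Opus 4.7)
The plan is to fix $X,Y\in\mathbb{R}^d$, set $r:=|X-Y|$, and study the error
$$w(Z):=\Gamma_A(Z,Y)-\Theta(Z,Y;A(X))$$
as a function of $Z$. Since $\Gamma_A(\cdot,Y)$ and $\Theta(\cdot,Y;A(X))$ carry the same Dirac source $\delta_Y I$, these sources cancel in $w$, and a direct calculation gives
$$\mathcal{L}_A w=\text{div}_Z G,\qquad G(Z):=\big(A(Z)-A(X)\big)\nabla_Z\Theta(Z,Y;A(X)),$$
together with the pointwise bound $|G(W)|\le C\tau|X-W|^\lambda|W-Y|^{1-d}$ from (\ref{smoothness}) and (\ref{constant-coefficient-estimate}).

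\textbf{The key representation.} The next step is to establish
$$w(Z)=-\int_{\mathbb{R}^d}\nabla_W\Gamma_A(Z,W)\cdot G(W)\,dW.$$
The right-hand side converges absolutely, defines a function $\tilde w$ that decays like $|Z|^{1-d}$ at infinity, and solves $\mathcal{L}_A\tilde w=\text{div}(G)$ weakly. Since $w$ also decays at infinity by (\ref{size-estimate}) and (\ref{constant-coefficient-estimate}), $w-\tilde w$ is a bounded solution of $\mathcal{L}_A u=0$ on $\mathbb{R}^d$ vanishing at infinity, hence identically zero by Lemma \ref{gradient-estimate-lemma} applied at scales $R\to\infty$. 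Setting $Z=X$, bounding $|\nabla_W\Gamma_A(X,W)|\le C|X-W|^{1-d}$ via (\ref{size-estimate}), and invoking the Riesz composition
$$\int_{\mathbb{R}^d}|X-W|^{1-d+\lambda}|W-Y|^{1-d}\,dW\le C|X-Y|^{2-d+\lambda}$$
(convergent since $d>2+\lambda$) delivers the first estimate.

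\textbf{Gradient and third estimates.} For the gradient, I would first extend the integral estimate to arbitrary $Z\in B(X,r/2)$ to get $\|w\|_{L^\infty(B(X,r/2))}\le Cr^{2-d+\lambda}$. Using $|\nabla_1^2\Theta(\cdot,Y;A(X))|\le C|\cdot-Y|^{-d}$ from (\ref{constant-coefficient-estimate}) with $N=2$ and the H\"older continuity of $A$, a routine splitting of $G(Z_1)-G(Z_2)$ yields $[G]_{C^{0,\lambda}(B(X,r/2))}\le C\tau r^{1-d}$. The local gradient estimate (\ref{local-gradient-estimate}) applied to $w$ on $B(X,r/4)$, as a weak solution of $\mathcal{L}_A w=\text{div}\,G$, then produces
$$|\nabla w(X)|\le\frac{C}{r}\cdot r^{2-d+\lambda}+Cr^\lambda\cdot\tau r^{1-d}\le C\tau\,r^{1-d+\lambda},$$
which is the second estimate. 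The third estimate would follow by the triangle inequality together with (\ref{constant-coefficient-difference-estimate}) for $N=1$: since $\|A(X)-A(Y)\|\le\tau r^\lambda$,
$$|\nabla_1\Theta(X,Y;A(X))-\nabla_1\Theta(X,Y;A(Y))|\le C\tau\,r^{1-d+\lambda}.$$

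\textbf{Main obstacle.} The hard part is justifying the global integral representation in the second step: absolute convergence of the integral, decay at infinity of $\tilde w$, and that $\tilde w$ weakly solves $\mathcal{L}_A\tilde w=\text{div}\,G$. All three reduce to the pointwise bound on $G$ together with (\ref{size-estimate}) and (\ref{constant-coefficient-estimate}); the Liouville-type uniqueness itself is immediate from Lemma \ref{gradient-estimate-lemma}.
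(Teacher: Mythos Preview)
Your argument is correct, but it differs from the paper's in two places worth noting.

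\textbf{The representation formula.} What you derive via the Liouville argument is exactly the identity (\ref{fundamental-solution-difference}) specialized to $\widetilde{A}=A(X)$, which the paper simply cites from \cite{Hofmann-Kim}. So for the first inequality the two proofs are the same once the formula is in hand. One small inaccuracy: your $\tilde w$ does not in general decay like $|Z|^{1-d}$ (the factor $|X-W|^\lambda$ in $G$ grows), but it does decay like $|Z|^{2-d+\lambda}\to 0$ since $d\ge 3$, and that is all the Liouville step needs.

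\textbf{The gradient estimate.} Here the paper takes a shorter route: it differentiates the integral representation directly in $X$ and invokes the second-derivative bound (\ref{2-derivative-estimate}), $|\nabla_X\nabla_Z\Gamma_A(X,Z)|\le C|X-Z|^{-d}$, to obtain (\ref{difference-estimate-2}) and hence
\[
|\nabla_X\Gamma_A(X,Y)-\nabla_1\Theta(X,Y;A(X))|\le C\int_{\mathbb{R}^d}\frac{dZ}{|X-Z|^{d-\lambda}|Z-Y|^{d-1}}\le C|X-Y|^{1-d+\lambda}
\]
in one line, uniformly in $r=|X-Y|$. Your route through the local gradient estimate (\ref{local-gradient-estimate}) also works, but it requires computing $[G]_{C^{0,\lambda}(B(X,r/2))}$ and, more importantly, (\ref{local-gradient-estimate}) carries a uniform constant only for radii $R\le 1$. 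You should therefore note explicitly that for $r\ge 4$ the claimed inequality is trivial from the size bounds (\ref{size-estimate}) and (\ref{constant-coefficient-estimate}), since $|X-Y|^{1-d}\le |X-Y|^{1-d+\lambda}$ when $|X-Y|\ge 1$. The third inequality is obtained from the second via (\ref{constant-coefficient-difference-estimate}) in both proofs.
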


\begin{proof}
Let $\widetilde{A}\in \Lambda (\mu,\lambda,\tau)$.
Then
\begin{equation}\label{fundamental-solution-difference}
{\Gamma}_{\widetilde{A}}^{\alpha\delta}(X,Y)-\Gamma_A^{\alpha\delta} (X,Y)
=\int_{\br^d}
\frac{\partial}{\partial z_i}
\Gamma^{\alpha\beta}_A(X,Z) \big\{
a_{ij}^{\beta\gamma} (Z)-\widetilde{a}_{ij}^{\beta\gamma} (Z)\big\}
\frac{\partial}{\partial z_j} {\Gamma}^{\gamma\delta}_{\widetilde{A}} (Z,Y)\, dZ
\end{equation}
for any $X,Y\in \br^d$ (see e.g. \cite{Hofmann-Kim}).
It follows from (\ref{fundamental-solution-difference}) 
and estimates (\ref{size-estimate}) and (\ref{2-derivative-estimate}) that
\begin{equation}\label{difference-estimate-1}
|{\Gamma}_{\widetilde{A}} (X,Y)-\Gamma_A (X,Y)|\le
C\int_{\br^d} \frac{|A(Z)-\widetilde{A}(Z)|}{|Z-X|^{d-1} |Z-Y|^{d-1}}\, dZ
\end{equation}
and
\begin{equation}\label{difference-estimate-2}
|\nabla_1{\Gamma}_{\widetilde{A}} (X,Y)-\nabla_1 \Gamma_A (X,Y)|\le
C\int_{\br^d} \frac{|A(Z)-\widetilde{A}(Z)|}{|Z-X|^{d} |Z-Y|^{d-1}}\, dZ.
\end{equation}

To show the first inequality in (\ref{local-behavior-estimate-1}),
we fix $X\in \mathbb{R}^d$ and let $\widetilde{A}=A(X)$. Then $\Gamma_{\widetilde{A}}
(X,Y)=\Theta(X,Y;A(X))$ and by (\ref{difference-estimate-1}),
$$
\aligned
|\Gamma_A (X,Y)-\Theta (X,Y;A(X))|
& \le C\int_{\mathbb{R}^d} \frac{|A(Z)-A(X)|}{|Z-X|^{d-1}|Z-Y|^{d-1}}\, dZ\\
& \le C\int_{\mathbb{R}^d} \frac{dZ}{|Z-X|^{d-1-\lambda}{|Z-Y|^{d-1}}}\\
&\le C|X-Y|^{2-d+\lambda}.
\endaligned
$$
The second inequality in (\ref{local-behavior-estimate-1})
follows from (\ref{difference-estimate-2})
in the same manner.
Note that by (\ref{constant-coefficient-difference-estimate}),
\begin{equation}\label{constant-coefficient-estimate-3}
|\nabla_1 \Theta (X,Y;A(X))-\nabla_1 \Theta (X,Y; A(Y))|\le C|X-Y|^{1-d+\lambda}.
\end{equation}
The third inequality in (\ref{local-behavior-estimate-1})
follows from the second and (\ref{constant-coefficient-estimate-3}).
\end{proof}

\begin{remark}{\rm
If we fix $Y\in \mathbb{R}^d$ and let $\widetilde{A}=A(Y)$, the same argument as in the proof of 
Lemma \ref{local-behavior-lemma} yields that
\begin{equation}\label{local-behavior-estimate-2}
\aligned
|\Gamma_A (X,Y)-\Theta (X,Y;A(Y))| &\le C|X-Y|^{2-d+\lambda},\\
|\nabla_Y \Gamma_A  (X,Y) -\nabla_2 \Theta (X,Y;A(Y))|
&\le C|X-Y|^{1-d+\lambda},\\
|\nabla_Y \Gamma_A  (X,Y) -\nabla_2 \Theta (X,Y;A(X))|
&\le C|X-Y|^{1-d+\lambda}.
\endaligned
\end{equation}
}
\end{remark}

To study the behavior of $\Gamma_A (X,Y)$ for $|X-Y|\ge 1$, 
we need to introduce the matrix of correctors,
$\chi =\chi (X)=(\chi_{i}^{\alpha\beta} (X))$, $1\le i\le d$, $1\le \alpha, \beta\le m$.
Here for each $i$ and $\alpha$,
$\chi_i^\alpha =(\chi_{i}^{\alpha 1}, \dots, \chi_{i}^{\alpha m})$ 
is the solution of the following cell problem:
\begin{equation}\label{corector}
\left\{
\aligned
& \mathcal{L}(\chi_{i}^\alpha)=\mathcal{L}(e^\alpha x_i) \quad \text{ in } \mathbb{R}^{d},\\
& \chi_{i}^\alpha \text{ is periodic with respect to }\mathbb{Z}^{d},\\
& \int_{[0,1]^{d}} \chi_{i}^{\alpha}\, dX=0,
\endaligned
\right.
\end{equation}
where $e^\alpha =(0, \dots, 1, \dots, 0)\in \mathbb{R}^m$ with $1$ in the $\alpha^{th}$ position.
Note by estimate (\ref{gradient-estimate}), $\|\nabla \chi\|_\infty\le C$
for some $C=C(d,m,\mu,\lambda, \tau)$.
Let $\mathcal{L}_0=-\text{div}(A_0\nabla )$ denote the homogenized
elliptic operator associated with $\{ \mathcal{L}_\varep\}$,
where $A_0$ is a constant matrix in $\Lambda (\mu, \lambda, \tau)$
(see e.g. \cite{Bensoussan-1978}, p.121
for the explicit formula of $A_0$, given in terms of $a_{ij}^{\alpha\beta} (X)$
and $\chi_i^{\alpha\beta}(X))$.

The following lemma was proved
in \cite{AL-1991}.

\begin{lemma}\label{global-behavior-lemma}
Let $A\in \Lambda(\mu, \lambda, \tau)$. Then
\begin{equation}\label{global-behavior-estimate}
\aligned
|\Gamma^{\alpha\beta}_A (X,Y)-\Gamma^{\alpha\beta}_{A_0} (X,Y)| 
&\le C|X-Y|^{2-d-\lambda_0},\\
|\frac{\partial}{\partial x_i}\Gamma^{\alpha\beta}_A(X,Y)-
\frac{\partial}{\partial x_i}\Gamma^{\alpha\beta}_{A_0}(X,Y)-
\frac{\partial }{\partial x_i} \chi_j^{\alpha\gamma} (X)\cdot \frac{\partial}{\partial x_j}
\Gamma_{A_0}^{\gamma\beta} (X,Y)|
& \le C |X-Y|^{1-d-\lambda_0},
\endaligned
\end{equation}
for any $X,Y\in \mathbb{R}^d$,
where $C>0$ and $\lambda_0\in (0,1)$ depend only on $d$, $m$, $\mu$, $\lambda$ and $\tau$.
\end{lemma}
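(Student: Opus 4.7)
The plan is to reduce the large-scale bound to a quantitative homogenization rate via rescaling, and then establish that rate using a two-scale expansion with correctors. Setting $R=|X-Y|\ge 1$ and $\varepsilon = 1/R$, a direct calculation shows that the fundamental solution $\Gamma^\varepsilon$ of $\mathcal{L}_\varepsilon$ satisfies $\Gamma_A(X,Y)=\varepsilon^{d-2}\Gamma^\varepsilon(\varepsilon X,\varepsilon Y)$; since $A_0$ is a constant matrix we likewise have $\Gamma_{A_0}(X,Y)=\varepsilon^{d-2}\Gamma_{A_0}(\varepsilon X,\varepsilon Y)$. Setting $Z=\varepsilon X$, $W=\varepsilon Y$ (so that $|Z-W|=1$), the first inequality in (\ref{global-behavior-estimate}) is equivalent to the unit-scale convergence rate $|\Gamma^\varepsilon(Z,W)-\Gamma_{A_0}(Z,W)|\le C\varepsilon^{\lambda_0}$.

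To establish this unit-scale estimate, fix $W$, let $v_\varepsilon(Z)=\Gamma^\varepsilon(Z,W)$ and $v_0(Z)=\Gamma_{A_0}(Z,W)$, and introduce the corrected difference $w_\varepsilon(Z) = v_\varepsilon(Z)-v_0(Z)-\varepsilon\chi_j^{\beta\gamma}(Z/\varepsilon)\,\partial_j v_0^\gamma(Z)\,\eta(Z)$, where $\eta\in C_0^\infty$ is a cutoff supported in the annulus $\mathcal{A}=\{1/8<|Z-W|<8\}$ and equal to $1$ on $\{1/2<|Z-W|<2\}$. Expanding $\mathcal{L}_\varepsilon w_\varepsilon$ using the corrector equation (\ref{corector}) and the skew-symmetric \emph{flux corrector}---a bounded periodic tensor $\phi$ whose divergence equals the zero-mean oscillating matrix $A(\cdot)+A(\cdot)\nabla\chi(\cdot)-A_0$---yields $\mathcal{L}_\varepsilon w_\varepsilon=\text{div}(F_\varepsilon)$ with $\|F_\varepsilon\|_{L^2(\mathcal{A})}\le C\varepsilon$, where one uses the uniform bound on $\nabla^2 v_0$ on $\mathcal{A}$ that is available because $A_0$ is constant.

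A Caccioppoli-type energy estimate then gives $\|w_\varepsilon\|_{L^2(\mathcal{A})}\le C\varepsilon$. To upgrade this to an $L^\infty$ bound on the sphere $\{|Z-W|=1\}$, I would combine the $L^2$ information on $w_\varepsilon$ with the uniform interior H\"older estimate for $\mathcal{L}_\varepsilon$-solutions---an immediate consequence of Lemma \ref{gradient-estimate-lemma} by rescaling---via the interpolation $\|f\|_\infty\le C\|f\|_2^\theta\|f\|_{C^{0,\alpha}}^{1-\theta}$, producing $\|w_\varepsilon\|_{L^\infty(\{|Z-W|=1\})}\le C\varepsilon^{\lambda_0}$ for some definite $\lambda_0\in(0,1)$. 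The corrector term $\varepsilon\chi(\cdot/\varepsilon)\nabla v_0$ is itself pointwise $O(\varepsilon)$ on the unit sphere and is absorbed; rescaling back in $(X,Y)$ yields the first inequality in (\ref{global-behavior-estimate}).

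The second inequality follows from essentially the same scheme but keeps the first-order corrector in the gradient ansatz: one compares $\nabla v_\varepsilon$ with $[I+\nabla\chi(\cdot/\varepsilon)]\nabla v_0$ on $\mathcal{A}$, which upon rescaling becomes the asserted bound on $\nabla_X\Gamma_A-\nabla_X\Gamma_{A_0}-\nabla\chi(X)\,\nabla_X\Gamma_{A_0}(X,Y)$. The principal obstacle is the construction and use of the skew-symmetric flux corrector: without representing the zero-mean oscillating flux as a divergence of a bounded periodic tensor, the remainder in $\mathcal{L}_\varepsilon w_\varepsilon$ would be only $O(1)$ in $L^2$ rather than $O(\varepsilon)$, and the decisive gain of $R^{-\lambda_0}$ would be lost. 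This is precisely the quantitative homogenization argument carried out in \cite{AL-1991}.
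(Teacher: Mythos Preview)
The paper does not give its own proof of this lemma: it simply states that the result ``was proved in \cite{AL-1991}'' and moves on. So there is no argument in the paper to compare against.

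Your sketch is a faithful outline of the Avellaneda--Lin argument you yourself cite at the end: rescale to unit distance so that the problem becomes a convergence rate for $\Gamma^\varepsilon$ versus $\Gamma_{A_0}$, subtract the first-order two-scale ansatz with corrector $\chi$, use the bounded periodic flux corrector to write the remainder as $\text{div}(F_\varepsilon)$ with $F_\varepsilon=O(\varepsilon)$, and then upgrade an $L^2$ rate to a pointwise $O(\varepsilon^{\lambda_0})$ rate via the uniform interior H\"older estimate. The only step that is a bit compressed is the passage from $\mathcal{L}_\varepsilon w_\varepsilon=\text{div}(F_\varepsilon)$ to $\|w_\varepsilon\|_{L^2(\mathcal{A})}\le C\varepsilon$: a bare Caccioppoli inequality does not control $\|w_\varepsilon\|_{L^2}$ from $\|F_\varepsilon\|_{L^2}$ alone---one needs either a representation formula against the fundamental solution or boundary/far-field control of $w_\varepsilon$ on a larger annulus (available here from the decay of both $v_\varepsilon$ and $v_0$). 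With that caveat your plan is correct and matches what the paper invokes by citation. The case $|X-Y|<1$, which you omit, is trivial since each term is separately $O(|X-Y|^{2-d})$ by (\ref{size-estimate}) and $|X-Y|^{2-d}\le |X-Y|^{2-d-\lambda_0}$ there.
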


\begin{remark}
{\rm Let $I$ denote the identity matrix (or the identity operator). 
For brevity the second estimate in (\ref{global-behavior-estimate}) may be written as
\begin{equation}\label{global-behavior-estimate-1}
|\nabla_X \Gamma_A (X,Y) -(I +\nabla \chi (X) )\nabla_X \Gamma_{A_0} (X,Y)|
\le C |X-Y|^{1-d-\lambda_0}.
\end{equation}
Using (\ref{adjoint-relation}), one may also deduce that
\begin{equation}\label{global-behavior-estimate-2}
|\nabla_Y (\Gamma_A (X,Y))^*
-(I+\nabla \chi^* (Y)) \nabla_Y (\Gamma_{A_0} (X,Y))^*|\le
C|X-Y|^{1-d-\lambda_0},
\end{equation}
where $\chi^*$ is the matrix of correctors for the adjoint operator $\mathcal{L}^* =
-\text{div}(A^*(X)\nabla)$.
}
\end{remark}

The rest of this section is devoted to the estimate of
$\Gamma_A (X,Y)-\Gamma_{\widetilde{A}} (X,Y)$ 
and its derivatives
when
$A$ is close to $\widetilde{A}$ in the space
${C^\lambda}(\mathbb{R}^d)$.
The results on the derivative estimates
are local and will be used in an approximation
argument for domains $\Omega$ with diam$(\Omega)\le 1$.

\begin{lemma}\label{fundamental-difference-lemma}
Let $A,\widetilde{A}\in \Lambda (\mu, \lambda, \tau)$.
Then for any $X,Y\in \mathbb{R}^d$,
\begin{equation}\label{fundamental-difference-estimate-1}
|\Gamma_A (X,Y)-\Gamma_{\widetilde{A}} (X,Y)|
\le C \| A-\widetilde{A}\|_\infty |X-Y|^{2-d},
\end{equation}
where $C=C(d, m, \mu, \lambda, \tau)>0$.
Moreover, for each $R\ge 1$, there exists  a constant $C_R$ depending on
$d$, $m$, $\mu$, $\lambda$, $\tau$ and $R$ such that
\begin{equation}\label{fundamental-difference-estimate-2}
\aligned
|\nabla_ X \Gamma_A (X,Y)-\nabla_X \Gamma_{\widetilde{A}} (X,Y)|
&\le C_R \| A-\widetilde{A}\|_{C^\lambda (\mathbb{R}^d)} |X-Y|^{1-d},\\
|\nabla_X\nabla_Y\Gamma_A (X,Y)-\nabla_X\nabla_Y\Gamma_{\widetilde{A}} (X,Y)|
&\le C_R \| A-\widetilde{A}\|_{C^\lambda(\mathbb{R}^d)} |X-Y|^{-d},
\endaligned
\end{equation}
for any $X,Y\in \mathbb{R}^d$ with $|X-Y|\le R$.
\end{lemma}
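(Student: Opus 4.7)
The plan is to deduce (\ref{fundamental-difference-estimate-1}) directly from the integral identity (\ref{fundamental-solution-difference}), and then bootstrap to the derivative estimates in (\ref{fundamental-difference-estimate-2}) by viewing $u:=\Gamma_A-\Gamma_{\widetilde A}$ as a weak solution of $\mathcal{L}^A u=\text{div}(f)$ and applying the interior estimates (\ref{local-Holder-gradient-estimate})--(\ref{local-gradient-estimate}) on a ball whose radius is comparable to $|X-Y|$. The first inequality is essentially immediate: the pointwise bound (\ref{difference-estimate-1}), already obtained in the proof of Lemma \ref{local-behavior-lemma}, shows that $|\Gamma_A-\Gamma_{\widetilde A}|(X,Y)$ is bounded by $\|A-\widetilde A\|_\infty$ times the convolution $\int_{\br^d}|Z-X|^{1-d}|Z-Y|^{1-d}\,dZ$, which for $d\ge 3$ is of order $|X-Y|^{2-d}$.

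For the first inequality in (\ref{fundamental-difference-estimate-2}), naively differentiating (\ref{fundamental-solution-difference}) in $X$ fails because $|\nabla_X\nabla_Z\Gamma_A(X,Z)|\sim|X-Z|^{-d}$ is not integrable near $X$. Instead, fix $Y$; subtracting the defining identities for $\Gamma_A(\cdot,Y)$ and $\Gamma_{\widetilde A}(\cdot,Y)$ shows that on $\br^d\setminus\{Y\}$,
\[
\mathcal{L}^A u=\text{div}\bigl((A-\widetilde A)\,\nabla_X\Gamma_{\widetilde A}(\cdot,Y)\bigr).
\]
Take $B=B(X,|X-Y|/8)$, so that $2B$ avoids $Y$ by distance $\ge 3|X-Y|/4$, and apply (\ref{local-gradient-estimate}) to $u$ on $2B$. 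The interior $L^2$ term is controlled by (\ref{fundamental-difference-estimate-1}) via $C\|A-\widetilde A\|_\infty|X-Y|^{1-d}$. For the $C^{0,\lambda}$ seminorm of $f=(A-\widetilde A)\nabla\Gamma_{\widetilde A}(\cdot,Y)$, split by the product rule into $\|A-\widetilde A\|_\infty[\nabla\Gamma_{\widetilde A}]_{C^{0,\lambda}(2B)}+[A-\widetilde A]_{C^{0,\lambda}}\|\nabla\Gamma_{\widetilde A}\|_{L^\infty(2B)}$ and bound each factor by applying (\ref{local-Holder-gradient-estimate}) and (\ref{size-estimate}) to the $\mathcal{L}^{\widetilde A}$-solution $\Gamma_{\widetilde A}(\cdot,Y)$, obtaining $\|\nabla\Gamma_{\widetilde A}\|_{L^\infty(2B)}\le C|X-Y|^{1-d}$ and $[\nabla\Gamma_{\widetilde A}]_{C^{0,\lambda}(2B)}\le C|X-Y|^{1-d-\lambda}$. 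Multiplying by $R^\lambda$ and using $|X-Y|\le R$ to absorb the leftover factors of $|X-Y|^\lambda\le R^\lambda$ into a constant $C_R$ produces $|\nabla u(X)|\le C_R\|A-\widetilde A\|_{C^\lambda(\br^d)}|X-Y|^{1-d}$.

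For the mixed-derivative bound, apply the identical scheme to $V(X):=\nabla_Y\Gamma_A(X,Y)-\nabla_Y\Gamma_{\widetilde A}(X,Y)$, which satisfies
\[
\mathcal{L}^A V=\text{div}\bigl((A-\widetilde A)\,\nabla_X\nabla_Y\Gamma_{\widetilde A}(\cdot,Y)\bigr).
\]
The $L^2$ mean of $V$ on $2B$ is controlled by the first part of (\ref{fundamental-difference-estimate-2}) itself, applied to the adjoint coefficients $A^*,\widetilde A^*$ through the identity (\ref{adjoint-relation}), giving $|V(Z)|\le C_R\|A-\widetilde A\|_{C^\lambda}|X-Y|^{1-d}$ on $2B$. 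The $C^{0,\lambda}$ norm of the new source is handled exactly as before, now using (\ref{2-derivative-estimate}) and (\ref{local-Holder-gradient-estimate}) applied to $\nabla_Y\Gamma_{\widetilde A}(\cdot,Y)$ (itself a solution of $\mathcal{L}^{\widetilde A}$ away from $Y$) to obtain $\|\nabla_X\nabla_Y\Gamma_{\widetilde A}\|_{L^\infty(2B)}\le C|X-Y|^{-d}$ and $[\nabla_X\nabla_Y\Gamma_{\widetilde A}]_{C^{0,\lambda}(2B)}\le C|X-Y|^{-d-\lambda}$. Combining as above yields the second inequality.

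The main obstacle is bookkeeping: one has to propagate the correct powers of $R$ and $|X-Y|$ through two successive gradient estimates, carefully tracking where the $R$-dependence enters---both through the interior estimates themselves (whose constants depend on $R$ when the underlying ball has radius $>1$) and through the conversion of $\|\cdot\|_\infty+R^\lambda[\,\cdot\,]_{C^\lambda}$ into $(1+R^\lambda)\|\cdot\|_{C^\lambda}$. Using the adjoint relation (\ref{adjoint-relation}) to promote the $X$-gradient estimate to a $Y$-gradient estimate is a convenient shortcut that avoids repeating the argument with $X,Y$ roles swapped.
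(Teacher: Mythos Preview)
Your proposal is correct and follows essentially the same route as the paper: derive (\ref{fundamental-difference-estimate-1}) from the integral identity and the convolution bound, then obtain the gradient estimates by writing $u=\Gamma_A-\Gamma_{\widetilde A}$ (respectively $V=\nabla_Y\Gamma_A-\nabla_Y\Gamma_{\widetilde A}$) as a solution of $\mathcal{L}^A u=\text{div}\bigl((A-\widetilde A)\nabla\Gamma_{\widetilde A}\bigr)$ on a ball of radius comparable to $|X-Y|$, apply the local interior estimate (\ref{local-gradient-estimate}), and use (\ref{adjoint-relation}) to reduce the bound on $\|V\|_{L^\infty(2B)}$ to the already-proved $\nabla_X$ estimate for the adjoint pair. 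The paper uses the ball $B(X_0,r/4)$ rather than your $B(X,r/8)$, but the argument is otherwise the same.
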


\begin{proof}
It follows from estimate (\ref{difference-estimate-1}) that
$$
\aligned
|\Gamma_A (X,Y)-\Gamma_{\widetilde{A}}(X,Y)|
& \le C\| {A}-\widetilde{A}\|_\infty
\int_{\mathbb{R}^d}
\frac{dZ}{|X-Z|^{d-1}|Z-Y|^{d-1}}\\
& \le C\| {A}-\widetilde{A}\|_\infty |X-Y|^{2-d}.
\endaligned
$$

To see (\ref{fundamental-difference-estimate-2}), we fix $X_0,Y_0\in \mathbb{R}^d$ and
consider $u(X)=\Gamma_A (X, Y_0)-\Gamma_{\widetilde{A}} (X,Y_0)$
in $2B$, where $B=B(X_0,r/4)$ and $r=|X_0-Y_0|\le R$.
It follows from (\ref{fundamental-difference-estimate-1}) that
$\|u\|_{L^\infty(2B)} \le Cr^{2-d}\| A-\widetilde{A}\|_\infty$.
Let $w(X)=\Gamma_{\widetilde{A}}(X,Y_0)$.
By (\ref{size-estimate}), $\|\nabla w\|_{L^\infty(2B)} \le Cr^{1-d}$.
In view of (\ref{local-Holder-gradient-estimate}), we also have
$\| \nabla w\|_{C^{0, \lambda}(2B)}\le C_R r^{1-d-\lambda}$.
Since
$
\mathcal{L}^A( u)=\text{div}(A\nabla w)=\text{div}\big((A-\widetilde{A})\nabla w\big)$
in $2B$,
it follows from (\ref{local-gradient-estimate}) that
$$
\aligned
\| \nabla u\|_{L^\infty(B)}
&\le C r^{-1} \|u\|_{L^\infty(2B)}
+C r^{\lambda} \| (A-\widetilde{A})\nabla w\|_{C^{0,\lambda} (2B)}\\
& \le
C r^{1-d} \| A-\widetilde{A}\|_\infty
+Cr^\lambda \| A-\widetilde{A}\|_{C^\lambda(\mathbb{R}^d)}
\| \nabla w\|_{C^\lambda(2B)}\\
&\le C r^{1-d} \| A-\widetilde{A}\|_{C^\lambda(\mathbb{R}^d)},
\endaligned
$$
where $C$ may depend on $R$. 
This gives the first inequality in (\ref{fundamental-difference-estimate-2}).
The second inequality in (\ref{fundamental-difference-estimate-2})
follows in the same manner.
Indeed, let $v(X)=\nabla_Y \Gamma_A (X,Y_0)-\nabla_Y \Gamma_{\widetilde{A}}(X, Y_0)$
and $g(X)=\nabla_Y \Gamma_{\widetilde{A}} (X,Y_0)$.
Then $\mathcal{L}^A(v)=\text{div}\big( (A-\widetilde{A})\nabla g\big)$ in $2B$.
Thus,
$$
\|\nabla v\|_{L^\infty(B)}
\le Cr^{-1} \| v\|_{L^\infty(2B)}
+Cr^\lambda \| A-\widetilde{A}\|_{C^\lambda(\mathbb{R}^d)}
\|\nabla g\|_{C^\lambda(2B)}.
$$
It follows from (\ref{adjoint-relation}) and
the first inequality in (\ref{fundamental-difference-estimate-2}) 
that
$\|v\|_{L^\infty(2B)}
\le Cr^{1-d} \| A-\widetilde{A}\|_{C^\lambda(\mathbb{R}^d)}$.
By (\ref{local-Holder-gradient-estimate})
and (\ref{size-estimate}), we see that $\| \nabla g\|_{C^\lambda(2B)} \le Cr^{-d-\lambda}$.
Hence 
$$
\|\nabla v\|_{L^\infty(B)}
\le Cr^{-d} \| A-\widetilde{A}\|_{C^\lambda(\mathbb{R}^d)},
$$
where $C$ may depend on $R$. This completes the proof.
\end{proof}

Define
\begin{equation}\label{kernel-difference}
\Pi_A (X,Y)
=\nabla_X \Gamma_A (X,Y)-\nabla_1 \Theta (X,Y;A(X)).
\end{equation}

\begin{lemma}\label{kernel-estimate-lemma}
Let $A, \widetilde{A}\in
\Lambda (\mu, \lambda, \tau)$ and $R\ge 1$.
Then for $X,Y$ with $|X-Y|\le R$,
\begin{equation}\label{kernel-estimate-1}
|\Pi_A (X,Y)-\Pi_{\widetilde{A}} (X,Y)|
\le C_R \| A-\widetilde{A}\|_{C^\lambda(\mathbb{R}^d)} 
|X-Y|^{1-d+\lambda},
\end{equation}
 where $C_R =C(d,m,\mu,\lambda,\tau, R)>0$.
\end{lemma}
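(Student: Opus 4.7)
The plan is as follows. Fix $X_0,Y_0\in\mathbb{R}^d$ with $r:=|X_0-Y_0|\le R$, and set
\begin{equation*}
u(X):=\bigl[\Gamma_A(X,Y_0)-\Theta(X,Y_0;A(X_0))\bigr]-\bigl[\Gamma_{\widetilde A}(X,Y_0)-\Theta(X,Y_0;\widetilde A(X_0))\bigr].
\end{equation*}
Because the frozen-coefficient term in $\Pi_A(X_0,Y_0)$ has argument $A(X_0)$ and is evaluated at $X_0$, one has $\nabla u(X_0)=\Pi_A(X_0,Y_0)-\Pi_{\widetilde A}(X_0,Y_0)$. The strategy is to apply the interior gradient estimate (\ref{local-gradient-estimate}) to $u$ on $B:=B(X_0,r/8)$, which sits at distance $\ge 7r/8$ from the pole $Y_0$ and so is a ball on which $u$ is smooth.

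With $w_A:=\Gamma_A(\cdot,Y_0)-\Theta(\cdot,Y_0;A(X_0))$, the identities $\mathcal{L}^A\Gamma_A=\delta_{Y_0}I=\mathcal{L}^{A(X_0)}\Theta(\cdot,Y_0;A(X_0))$ give $\mathcal{L}^A w_A=\text{div}\bigl((A-A(X_0))\nabla\Theta(\cdot,Y_0;A(X_0))\bigr)$, and similarly for $w_{\widetilde A}$. Using $\mathcal{L}^A=\mathcal{L}^{\widetilde A}+\text{div}((\widetilde A-A)\nabla\cdot)$ then yields $\mathcal{L}^A u=\text{div}(F)$ on $\mathbb{R}^d\setminus\{Y_0\}$ with
\begin{equation*}
F=(A-A(X_0))\nabla\Theta(\cdot,Y_0;A(X_0))-(\widetilde A-\widetilde A(X_0))\nabla\Theta(\cdot,Y_0;\widetilde A(X_0))+(\widetilde A-A)\nabla w_{\widetilde A}.
\end{equation*}
Invoking (\ref{local-gradient-estimate}) on $B$ then reduces the lemma to two auxiliary bounds on $2B$: namely $\|u\|_{L^\infty(2B)}\le C_R\|A-\widetilde A\|_{C^\lambda(\mathbb{R}^d)}r^{2-d+\lambda}$ and $\|F\|_{C^{0,\lambda}(2B)}\le C_R\|A-\widetilde A\|_{C^\lambda(\mathbb{R}^d)}r^{1-d}$, since $r^{-1}\cdot r^{2-d+\lambda}=r^\lambda\cdot r^{1-d}=r^{1-d+\lambda}$.

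The Hölder bound on $F$ is routine. For the first two terms, I would split them as $(A-A(X_0))\bigl[\nabla\Theta(\cdot;A(X_0))-\nabla\Theta(\cdot;\widetilde A(X_0))\bigr]+\bigl[(A-\widetilde A)-(A-\widetilde A)(X_0)\bigr]\nabla\Theta(\cdot;\widetilde A(X_0))$ and apply (\ref{constant-coefficient-estimate}) and (\ref{constant-coefficient-difference-estimate}) together with the Hölder continuity of $A$, $\widetilde A$, and $A-\widetilde A$. For the last term, one first shows $|\nabla w_{\widetilde A}|\le Cr^{1-d+\lambda}$ on $2B$ using Lemma \ref{local-behavior-lemma} together with (\ref{constant-coefficient-difference-estimate}) (to transfer the freezing from $\widetilde A(X)$ to $\widetilde A(X_0)$), and its $C^{0,\lambda}$ seminorm is controlled by applying (\ref{local-Holder-gradient-estimate}) to the divergence-form equation satisfied by $w_{\widetilde A}$.

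The main obstacle is the refined $L^\infty$ bound on $u$, since $|w_A|,|w_{\widetilde A}|$ are each only known to be $O(r^{2-d+\lambda})$ by Lemma \ref{local-behavior-lemma} and (\ref{constant-coefficient-difference-estimate}), with no factor of $\|A-\widetilde A\|_{C^\lambda}$ attached; the factor must come from genuine cancellation. I would rewrite $\Gamma_A-\Gamma_{\widetilde A}$ through the identity (\ref{fundamental-solution-difference}) and rewrite $\Theta(\cdot;A(X_0))-\Theta(\cdot;\widetilde A(X_0))$ through the identical identity applied to the constant-coefficient operators. Splitting $(A-\widetilde A)(Z)=(A-\widetilde A)(X_0)+\bigl[(A-\widetilde A)(Z)-(A-\widetilde A)(X_0)\bigr]$ produces two groups of integrals. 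The increment piece carries a factor $\|A-\widetilde A\|_{C^{0,\lambda}}\min(|Z-X_0|^\lambda,1)$, which together with the bounds $|\nabla_Z\Gamma_A(X,Z)|\le C|X-Z|^{1-d}$ and $|\nabla_Z\Gamma_{\widetilde A}(Z,Y_0)|\le C|Z-Y_0|^{1-d}$ integrates to $C_R\|A-\widetilde A\|_{C^{0,\lambda}}r^{2-d+\lambda}$ by scaling (the cutoff at $|Z-X_0|=1$ is needed for large-$Z$ convergence, which is why $C_R$ depends on $R$). The $(A-\widetilde A)(X_0)$ piece pairs with the $\Theta$-difference; using Lemma \ref{local-behavior-lemma} and (\ref{local-behavior-estimate-2}) to replace $\nabla_Z\Gamma_A(X,Z)$ and $\nabla_Z\Gamma_{\widetilde A}(Z,Y_0)$ by their frozen-coefficient counterparts produces integrands with an extra $|\cdot|^\lambda$ factor, and the resulting integrals are again $\sim r^{2-d+\lambda}$. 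Adding everything yields the $L^\infty$ bound on $u$, and (\ref{local-gradient-estimate}) on $B$ delivers the lemma.
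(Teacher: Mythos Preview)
Your approach is correct but takes a genuinely different route from the paper's. The paper derives an explicit integral representation for $\Pi_A(X,Y)$ by writing the difference $\Gamma_A(X,Y)-\Theta(X,Y;A(P))$ as a solid integral over $\Omega=B(X_0,2R)$ plus boundary terms (via integration by parts), then differentiating in $X$ and setting $P=X$. The difference $\Pi_A-\Pi_{\widetilde A}$ is then telescoped into three solid integrals $I_1,I_2,I_3$ (changing one factor at a time) plus boundary terms; $I_1$ is handled by the second estimate in Lemma~\ref{fundamental-difference-lemma}, $I_2$ by the H\"older bound on $A-\widetilde A$, and $I_3$ by (\ref{constant-coefficient-difference-estimate}), while the boundary terms are harmless since $\partial\Omega$ sits at distance $\ge R$ from $X,Y$.

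Your argument instead writes $\Pi_A(X_0,Y_0)-\Pi_{\widetilde A}(X_0,Y_0)=\nabla u(X_0)$ and applies the interior gradient estimate (\ref{local-gradient-estimate}) directly, reducing to an $L^\infty$ bound on $u$ and a $C^{0,\lambda}$ bound on the source $F$. The $F$--bound is straightforward; the $u$--bound you obtain by applying the global identity (\ref{fundamental-solution-difference}) both to $\Gamma_A-\Gamma_{\widetilde A}$ and to its constant--coefficient analogue, then splitting $(A-\widetilde A)(Z)$ into its value at $X_0$ plus an increment, and finally using Lemma~\ref{local-behavior-lemma} and (\ref{constant-coefficient-difference-estimate}) to control the residual differences. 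This works, and it has the virtue of not invoking Lemma~\ref{fundamental-difference-lemma} (indeed, it parallels the \emph{proof} of that lemma rather than its statement). The paper's route is more direct because the representation on the bounded ball $\Omega$ avoids any tail issues and feeds straight into the already--proven second--derivative difference estimate; your route trades that representation for the PDE gradient estimate, at the cost of a more delicate cancellation argument for $\|u\|_{L^\infty(2B)}$.
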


\begin{proof}
Let $\Omega=B(X_0,2R)$.
For any fixed $P\in B(X_0,R)$, it follows from integration by parts that
\begin{equation}\label{local-difference-formula}
\aligned
& \Gamma_A^{\alpha\delta}(X,Y)- \Theta^{\alpha\delta}
(X,Y;A(P))\\
&=\int_\Omega \frac{\partial}{\partial z_i} \Gamma_A^{\alpha\beta}(X,Z)
\big\{ a_{ij}^{\beta\gamma} (P)-a_{ij}^{\beta\gamma}(Z)\big\}
\frac{\partial}{\partial z_j} \Theta^{\delta\gamma}(Z,Y; A(P))\, dZ\\
&\qquad
+\int_{\partial\Omega} 
\frac{\partial}{\partial z_i} \Gamma_A^{\alpha\beta}(X,Z)
a_{ij}^{\beta\gamma}(Z) n_j(Z) \Theta^{\gamma\delta}(Z,Y; A(P))\, d\sigma(Z)\\
&\qquad
-\int_{\partial\Omega} \Gamma_A^{\alpha\beta}(X,Z)
n_i(Z) a_{ij}^{\beta\gamma}(P) \frac{\partial}{\partial z_j}
\Theta^{\gamma\delta}(Z,Y; A(P))\, d\sigma(Z),
\endaligned
\end{equation}
where $n=(n_1, \dots, n_d)$ denotes the unit outward normal to $\partial\Omega$.
We now take the derivative with respect to $X$ on the both side of
(\ref{local-difference-formula}) and then choose $X=P$. With abuse of notation we may write
\begin{equation}
\aligned
\Pi_A (X,Y)
= &\int_\Omega
\nabla_X \nabla_Z \Gamma_A (X,Z)
\big\{ A(X)-A(Z)\big\} \nabla_1 \Theta (Z,Y;A(X))\, dZ\\
& \qquad +\int_{\partial\Omega} \nabla_Z\nabla_X \Gamma_A (X,Z)
A(Z) n(Z) \Theta (Z,Y;A(X))\, d\sigma(Z)\\
& \qquad-\int_{\partial\Omega}
\nabla_X \Gamma_A (X,Z) n(Z) A(X) \nabla_Z \Theta (Z,Y; A(X))\, d\sigma (Z).
\endaligned
\end{equation}
To estimate
$
\Pi_A (X,Y)-\Pi_{\widetilde{A}} (X,Y)
$,
we split its solid integrals as
$I_1 +I_2 +I_3$, where
$$
\aligned
I_1
&=\int_\Omega
\big\{ \nabla_X\nabla_Z \Gamma_A (X,Z)-\nabla_X\nabla_Z \Gamma_{\widetilde{A}}
(X,Z)\big\} 
\big\{ A(X)-A(Z)\big\} \nabla_1 \Theta (Z,Y;A(X))\, dZ,\\
I_2
&=\int_\Omega
\big\{\nabla_X\nabla_Z \Gamma_{\widetilde{A}}
(X,Z)\big\} 
\big\{ A(X)-A(Z)- (\widetilde{A}(X)-\widetilde{A}(Z))
\big\} \nabla_1 \Theta (Z,Y;A(X))\, dZ,\\
I_3
&=\int_\Omega
\big\{\nabla_X\nabla_Z \Gamma_{\widetilde{A}}
(X,Z)\big\} 
\big\{ \widetilde{A}(X)-\widetilde{A}(Z)\big\} 
\big\{ \nabla_1 \Theta (Z,Y;A(X))
-\nabla_1 \Theta (Z,Y; \widetilde{A}(X))\big\}\, dZ.
\endaligned
$$
It follows from (\ref{fundamental-difference-estimate-2}) that
$$
\aligned
|I_1|& \le C_R\|A-\widetilde{A}\|_{C^\lambda(\mathbb{R}^d)}
\int_\Omega \frac{dZ}{|X-Z|^{d-\lambda}|Z-Y|^{d-1}}\\
& \le C_R \| A-\widetilde{A}\|_{C^{\lambda}(\mathbb{R}^d)} |X-Y|^{1-d+\lambda}.
\endaligned
$$
Similarly, by estimates (\ref{2-derivative-estimate}) 
and (\ref{constant-coefficient-difference-estimate}),
$$
\aligned
& |I_2|
\le C\| A-\widetilde{A}\|_{C^{0, \lambda}(\mathbb{R}^d)} |X-Y|^{1-d+\lambda},\\
&|I_3|\le C\| A-\widetilde{A}\|_\infty |X-Y|^{1-d+\lambda}.
\endaligned
$$

Finally we may split the surface integrals in $\Pi_A (X,Y)-\Pi_{\widetilde{A}}(X,Y)$
in a similar fashion to show that they are bounded by 
$C_R \| A-\widetilde{A}\|_{C^\lambda (\mathbb{R}^d)}$.
\end{proof}

\begin{remark}\label{kernel-remark}
{\rm 
Let 
\begin{equation}
\Delta_A (X,Y)=\nabla_Y \Gamma_A (X,Y)-\nabla_2 \Theta (X,Y;A(Y)).
\end{equation}
Let $A,\widetilde{A} \in \Lambda (\mu, \lambda, \tau)$
and $R\ge 1$. Using (\ref{adjoint-relation}), one may deduce
from (\ref{kernel-estimate-1}) that
for $X,Y\in \mathbb{R}^d$ with $|X-Y|\le R$,
\begin{equation}\label{kernel-estimate-2}
|\Delta_A (X,Y)-\Delta_{\widetilde{A}} (X,Y)|
\le C_R \|A-\widetilde{A}\|_{C^\lambda(\mathbb{R}^d)} |X-Y|^{1-d+\lambda},
\end{equation}
where $C_R$ depends only on $d$, $m$, $\mu$, $\lambda$, $\tau$ and $R$.
}
\end{remark}

\section{Singular integral operators on Lipschitz surfaces}

Let $\Omega$ be a bounded Lipschitz domain in $\mathbb{R}^d$.
Consider two singular integral operators on $\partial\Omega$,
\begin{equation}\label{definition-of-T}
\aligned
T_A^1(f)(P) & =\text{\rm p.v.}\int_{\partial\Omega}
\nabla_1 \Gamma_A(P,Y) f(Y)\, d\sigma(Y)\\
& :=\lim_{\rho\to 0^+}
\int_{Y\in \partial\Omega,\, |Y-P|>\rho} 
\nabla_1 \Gamma_A (P,Y) f(Y)\, d\sigma(Y),\\
T_A^2(f)(P) & =\text{\rm p.v.}\int_{\partial\Omega}
\nabla_2 \Gamma_A(P,Y) f(Y)\, d\sigma(Y),
\endaligned
\end{equation}
where $A\in  \Lambda(\mu, \lambda,\tau)$.
We also introduce the maximal singular integral operators
$T_A^{1,*}$ and $T_A^{2,*}$ on $\partial\Omega$, defined by
\begin{equation}
\aligned
T_A^{1,*}(f)(P)
&=\sup_{\rho>0}
\big| \int_{Y\in \partial\Omega,\,  |Y-P|>\rho} 
\nabla_1 \Gamma_A (P,Y) f(Y)\, d\sigma(Y)\big|,\\
T_A^{2,*}(f)(P)
&=\sup_{\rho>0}
\big| \int_{Y\in \partial\Omega,\, |Y-P|>\rho} 
\nabla_2 \Gamma_A (P,Y) f(Y)\, d\sigma(Y)\big|.
\endaligned
\end{equation}

The main purpose of this section is to establish the following.

\begin{thm}\label{operator-boundedness-theorem}
Let $f\in L^p(\partial\Omega)$ for some $1<p<\infty$.
Then $T^1_A(f)(P)$ and $T^2_A(f)(P)$ exist for a.e. $P\in \partial\Omega$
and
$$
\| T^{1,*}_A (f)\|_p +\|T^{2,*}_A (f)\|_p
\le C_p \| f\|_p,
$$
where $C_p $ depends only on $d$, $m$, $\mu$, $\lambda$, $\tau$, $p$ and
the Lipschitz character of $\Omega$.
\end{thm}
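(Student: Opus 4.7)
The plan is to decompose the kernels of $T^1_A$ and $T^2_A$ (the latter via the symmetric estimates (\ref{local-behavior-estimate-2})) into a Calder\'on--Zygmund principal part plus a weakly singular remainder. Using the definition (\ref{kernel-difference}) of $\Pi_A$ and Lemma \ref{local-behavior-lemma},
\[
\nabla_1 \Gamma_A(P, Y) = \nabla_1 \Theta(P, Y; A(P)) + \Pi_A(P, Y),\qquad |\Pi_A(P, Y)| \le C\,|P - Y|^{1 - d + \lambda},
\]
uniformly for $P, Y \in \mathbb{R}^d$.

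For the remainder, because $\partial\Omega$ is $(d-1)$-dimensional and $\lambda > 0$, the estimate $\sup_P \int_{\partial\Omega} |P - Y|^{1 - d + \lambda}\, d\sigma(Y) \le C\,(\mathrm{diam}\,\Omega)^\lambda$ together with its symmetric counterpart gives, by Schur's test, $L^p(\partial\Omega)$-boundedness of the operator with kernel $\Pi_A$ for every $1 < p < \infty$; the associated maximal operator is controlled pointwise by the integral of $|\Pi_A|\,|f|$ and so is bounded as well. For the principal part $K_A(P, Y) = \nabla_1 \Theta(P, Y; A(P))$, we begin with the constant-coefficient case: for each fixed matrix $E$ in the ellipticity class, $\nabla_1 \Theta(\cdot, \cdot; E)$ is a standard Calder\'on--Zygmund kernel (homogeneous of degree $1 - d$, smooth off the diagonal), and the $L^p$-boundedness of the associated singular integral and its maximal truncations on $\partial\Omega$ follows from the Coifman--McIntosh--Meyer theorem on Cauchy integrals on Lipschitz curves via the method of rotations (cf.~Verchota 1984 and Fabes--Kenig--Verchota), with operator norms uniform in $E$ over compact subsets of the ellipticity class. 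To pass to the $P$-dependent $K_A$, we employ a freezing-coefficients argument: take a partition of unity $\{\phi_j\}$ subordinate to a cover of $\partial\Omega$ by balls $B_j = B(P_j, r)$ of small radius $r$, and for $P \in \mathrm{supp}\,\phi_j$ write
\[
K_A(P, Y) = \nabla_1 \Theta(P, Y; A(P_j)) + \bigl[\nabla_1 \Theta(P, Y; A(P)) - \nabla_1 \Theta(P, Y; A(P_j))\bigr].
\]
The first summand is a constant-coefficient CZ kernel handled above; the second, by (\ref{constant-coefficient-difference-estimate}) and the H\"older continuity of $A$, is dominated by $C\,r^\lambda |P - Y|^{1 - d}$, yielding a CZ operator of norm $O(r^\lambda)$ after summation over the finite partition of unity. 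This is the standard Mitrea--Taylor strategy for variable-coefficient layer potentials on Lipschitz surfaces.

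The maximal operators $T^{1, *}_A$ and $T^{2, *}_A$ then follow from Cotlar's inequality on the space of homogeneous type $(\partial\Omega, d\sigma)$, which dominates the maximal singular integral pointwise by $C\,[M(|T_A^j f|^s)]^{1/s} + C\,Mf$ for some $s \in (0, 1)$, with $M$ the Hardy--Littlewood maximal operator, bounded on $L^p$ for $1 < p < \infty$. Almost-everywhere existence of the principal values then follows by a standard density argument on a dense class of H\"older test functions, for which the principal value is visible directly from the kernel estimates. The main technical obstacle is the freezing-coefficients step: the error kernel from freezing is still of Calder\'on--Zygmund size rather than weakly singular, so its $L^p$-boundedness must be obtained by reinvoking the constant-coefficient CMM bound with an $O(r^\lambda)$ prefactor, which requires the CMM estimate to be uniform across the ellipticity class and a careful summation over the partition of unity so that the perturbation can be absorbed.
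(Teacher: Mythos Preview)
Your decomposition $\nabla_1\Gamma_A = \nabla_1\Theta(\cdot,\cdot;A(P)) + \Pi_A$ with $|\Pi_A(P,Y)|\le C|P-Y|^{1-d+\lambda}$ is correct, but the Schur bound you extract from it,
\[
\int_{\partial\Omega}|P-Y|^{1-d+\lambda}\,d\sigma(Y) \le C\,(\mathrm{diam}\,\Omega)^\lambda,
\]
depends on $\mathrm{diam}(\Omega)$. The theorem requires the constant to depend only on the \emph{Lipschitz character} of $\Omega$, which in this paper is defined via the scale-invariant class $\mathbf{\Pi}(M,N)$ (end of Section~1); dilations $\Omega\mapsto t\Omega$ preserve the Lipschitz character but send the diameter to infinity. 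This is not cosmetic: the whole purpose of Theorem~\ref{operator-boundedness-theorem} is to feed into the rescaling argument of Remark~\ref{remark-rescaling} and Theorem~\ref{e-layer-potential-maximal-theorem}, where one applies the $\varep=1$ result on the rescaled domain $\Omega_\varep=\varep^{-1}\Omega$, whose diameter blows up as $\varep\to 0$. Your bound on $\Pi_A$ is useless at large scales because the exponent $1-d+\lambda$ is \emph{larger} than $1-d$, so the remainder does not decay faster than the principal part when $|P-Y|\gg 1$. The same objection applies to your freezing-coefficients treatment of the principal part: a partition of unity on $\partial\Omega$ by balls of fixed radius $r$ has cardinality growing with $\mathrm{diam}(\Omega)$, and you have not explained how the summation avoids picking up that factor.

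The paper fixes this by splitting at scale $|P-Y|=1$ (Lemma~\ref{approximation-lemma-1}). For $|P-Y|\le 1$ it uses exactly your comparison to $\nabla_1\Theta(P,Y;A(P))$ via Lemma~\ref{local-behavior-lemma}. For $|P-Y|>1$ it invokes the Avellaneda--Lin estimate (Lemma~\ref{global-behavior-lemma}, inequality~(\ref{global-behavior-estimate-1})), which compares $\nabla_X\Gamma_A$ to $(I+\nabla\chi(X))\nabla_X\Gamma_{A_0}$ with a remainder of order $|X-Y|^{1-d-\lambda_0}$ --- note the \emph{negative} extra exponent, integrable at infinity and hence dominated pointwise by $\mathcal{M}_{\partial\Omega}(f)$. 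Both comparison kernels $\nabla_1\Theta(\cdot,\cdot;A(P))$ and $\nabla_1\Gamma_{A_0}$ are constant-coefficient CZ kernels, handled by Coifman--McIntosh--Meyer; the $P$-dependence in the first is treated in one stroke by spherical harmonic decomposition (Lemma~\ref{local-operator-boundedness-lemma}) rather than by freezing. This is precisely where the periodicity of $A$ enters the proof: without it, Lemma~\ref{global-behavior-lemma} is unavailable and no diameter-free bound is known.
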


Let $\psi:\mathbb{R}^{d-1}\to \mathbb{R}$ be a Lipschitz function with $\|\nabla\psi\|_\infty
\le M$ and 
\begin{equation}\label{Lipschitz-region}
D =\big\{ (x^\prime,x_d): x^\prime\in \mathbb{R}^{d-1} \text{ and } x_d> \psi (x^\prime)\big\}.
\end{equation}
By a partition of unity and rotation of coordinate systems, 
it suffices to prove Theorem \ref{operator-boundedness-theorem}
when $\Omega=D$, with constant $C_p$ depending
only on $d$, $m$, $\mu$, $\lambda$, $\tau$, $p$ and $M$.
To this end, the basic idea to treat $T^{1,*}_A$
 is to approximate the integral kernel $\nabla_1\Gamma_A (P,Y)$
by $\nabla_1 \Theta (P,Y;A(P))$ when $|P-Y|\le 1$, and
by $(I+\nabla\chi (P))\nabla_1 \Gamma_{A_0} (P,Y)$ when $|P-Y|\ge 1$.
The operator $T^{2,*}_A$ may be handled in a similar manner.

Let $\mathcal{M}_{\partial D}$ denote the Hardy-Littlewood maximal operator on
$\partial D$.
The proof of Theorem \ref{operator-boundedness-theorem}
relies on the following two lemmas.

\begin{lemma}\label{approximation-lemma-1}
Let $\Omega=D$ be given by (\ref{Lipschitz-region}). 
Then for each $P\in \partial D$,
$$
\aligned
T_A^{1,*}(f)(P)
& \le C\mathcal{M}_{\partial D}(f) (P) +2\sup_{\rho>0} \big| \int_{Y\in \partial D,\, |Y-P|>\rho}
\nabla_1 \Theta (P,Y;A(P)) f(Y)\, d\sigma (Y)\big|\\
&\qquad + C \sup_{\rho>0}
\big|\int_{Y\in \partial D,\, |Y-P|>\rho}
\nabla_1 \Gamma_{A_0}(P,Y) f(Y)\, d\sigma (Y)\big|,\\
T_A^{2,*}(f)(P)
& \le C\mathcal{M}_{\partial D}(f) (P) +2\sup_{\rho>0} \big| \int_{Y\in \partial D, |Y-P|>\rho}
\nabla_2 \Theta (P,Y;A(Y)) f(Y)\, d\sigma (Y)\big|\\
&\qquad +\sup_{\rho>0}
\big|\int_{Y\in \partial D,\, |Y-P|>\rho}
\nabla_2 \Gamma_{A_0}(P,Y) g(Y)\, d\sigma (Y)\big|,
\endaligned
$$
where $C=C(d,m,\mu, \lambda, \tau, M)$
and $|g|\le C|f|$ on $\partial D$.
\end{lemma}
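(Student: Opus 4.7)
The strategy is to split the truncated integral defining $T_A^{1,*}(f)(P)$ at the scale $|Y-P|=1$ and replace $\nabla_1\Gamma_A$ by a constant-coefficient surrogate in each regime. Inside the unit ball I would invoke Lemma \ref{local-behavior-lemma} to replace $\nabla_1\Gamma_A(P,Y)$ by $\nabla_1\Theta(P,Y;A(P))$; outside, I would invoke Lemma \ref{global-behavior-lemma} (in the form (\ref{global-behavior-estimate-1})) to replace it by $(I+\nabla\chi(P))\nabla_1\Gamma_{A_0}(P,Y)$. In both regimes the resulting error kernel is integrable against surface measure on $\partial D$, so its contribution is absorbed by $\mathcal{M}_{\partial D}(f)(P)$; the main parts produce the two supremum terms on the right-hand side.

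Concretely, fix $P\in\partial D$ and $\rho>0$, and first suppose $\rho\le 1$. Write the domain of integration as $\{\rho<|Y-P|\le 1\}\cup\{|Y-P|>1\}$. On the small annulus, Lemma \ref{local-behavior-lemma} yields $|\nabla_1\Gamma_A(P,Y)-\nabla_1\Theta(P,Y;A(P))|\le C|Y-P|^{1-d+\lambda}$, and a standard dyadic decomposition of $\{|Y-P|\le 1\}$ into annuli of radii $\sim 2^{-k}$ gives
\[
\int_{|Y-P|\le 1}|Y-P|^{1-d+\lambda}|f(Y)|\,d\sigma(Y)\le C\mathcal{M}_{\partial D}(f)(P).
\]
The remaining main term on the small annulus is bounded by
\[
\Bigl|\int_{\rho<|Y-P|\le 1}\nabla_1\Theta(P,Y;A(P))f(Y)\,d\sigma(Y)\Bigr|\le 2\sup_{\rho'>0}\Bigl|\int_{|Y-P|>\rho'}\nabla_1\Theta(P,Y;A(P))f(Y)\,d\sigma(Y)\Bigr|,
\]
obtained by writing $\{\rho<|Y-P|\le 1\}=\{|Y-P|>\rho\}\setminus\{|Y-P|>1\}$ and applying the triangle inequality; this accounts for the factor $2$ in the statement.

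On the large piece $\{|Y-P|>\max(\rho,1)\}$ (which is the whole domain of integration when $\rho>1$), estimate (\ref{global-behavior-estimate-1}) provides $|\nabla_1\Gamma_A(P,Y)-(I+\nabla\chi(P))\nabla_1\Gamma_{A_0}(P,Y)|\le C|Y-P|^{1-d-\lambda_0}$. Since $1-d-\lambda_0<1-d$, a dyadic decomposition on $\{|Y-P|>1\}$ absorbs this error into $C\mathcal{M}_{\partial D}(f)(P)$. The residual main term is $(I+\nabla\chi(P))\int_{|Y-P|>\max(\rho,1)}\nabla_1\Gamma_{A_0}(P,Y)f(Y)\,d\sigma(Y)$; pulling $(I+\nabla\chi(P))$ out of the integral and using $\|\nabla\chi\|_\infty\le C$ from Lemma \ref{gradient-estimate-lemma} bounds it by $C\sup_{\rho'>0}\bigl|\int_{|Y-P|>\rho'}\nabla_1\Gamma_{A_0}(P,Y)f(Y)\,d\sigma(Y)\bigr|$. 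Combining the two regimes and taking the supremum over $\rho$ yields the stated bound for $T_A^{1,*}$.

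For $T_A^{2,*}$ the same three-step decomposition applies, using Remark \ref{kernel-remark} for the small-scale approximation $\nabla_2\Gamma_A(P,Y)\approx\nabla_2\Theta(P,Y;A(Y))$ and the dual global estimate (\ref{global-behavior-estimate-2}) for the large-scale one. The key difference is that the large-scale corrector $I+\nabla\chi^*(Y)$ now depends on $Y$ rather than $P$, so it cannot be factored out of the integral; instead it is absorbed into a new density $g(Y)$ satisfying $|g|\le C|f|$ pointwise, since $\|\nabla\chi^*\|_\infty\le C$. The only real obstacle in the argument is bookkeeping: uniting the $\rho\le 1$ and $\rho>1$ cases into a single supremum bound, and, for $T_A^{2,*}$, tracking the matrix/transpose structure carefully enough to see that the correction factor really does bound pointwise by a constant and can legitimately be folded into the density $g$.
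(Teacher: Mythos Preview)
Your proposal is correct and follows essentially the same approach as the paper: split the truncated integral at scale $|Y-P|=1$, apply (\ref{local-behavior-estimate-1}) on the inner annulus and (\ref{global-behavior-estimate-1}) on the outer region, absorb the error kernels into $C\mathcal{M}_{\partial D}(f)(P)$ by dyadic summation, and for $T_A^{2,*}$ fold the $Y$-dependent factor $(I+\nabla\chi^*(Y))$ into the density $g$. One minor correction: the small-scale estimate you need for $T_A^{2,*}$ is (\ref{local-behavior-estimate-2}), not Remark~\ref{kernel-remark}, which bounds $\Delta_A-\Delta_{\widetilde{A}}$ rather than $\Delta_A$ itself.
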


\begin{proof}
Fix $P\in \partial D$ and $\rho>0$. If $\rho\ge 1$, we use estimate
(\ref{global-behavior-estimate-1}) to obtain
$$
\aligned
& \big| \int_{|Y-P|>\rho} \nabla_1 \Gamma_A (P,Y) f(Y)\, d\sigma (Y)\big|\\
& \le C \big|\int_{|Y-P|>\rho}
\nabla_1 \Gamma_{A_0}(P,Y) f(Y)\, d\sigma (Y)\big|
+C\int_{|Y-P|>\rho} |Y-P|^{1-d-\lambda_0} |f(Y)|\, d\sigma (Y)\\
&
\le C \sup_{t>0}\big|\int_{|Y-P|>t}
\nabla_1 \Gamma_{A_0}(P,Y) f(Y)\, d\sigma (Y)\big|
+ C \mathcal{M}_{\partial D} (f) (P).
\endaligned
$$
If $0<\rho<1$, we write $\{ |Y-P|>\rho\}$ as
$\{ |Y-P|>1\} \cup \{ 1\ge |Y-P|>\rho\}$.
The integral of $\nabla_1 \Gamma_A (P,Y) f(Y)$ on $\{|Y-P|>1\} $ may be treated as above.
To handle the integral on $\{ 1\ge |Y-P|>\rho\}$,
we use estimate (\ref{local-behavior-estimate-1}) to obtain
$$
\aligned
&\big|\int_{1\ge |Y-P|>\rho}
\nabla_1 \Gamma_A (P,Y) f(Y)\, d\sigma(Y)\big|\\
&
\le \big|\int_{1\ge |Y-P|>\rho}
\nabla_1 \Theta (P,Y; A(P)) f(Y)\, d\sigma (Y)\big|
+C\int_{|Y-P|\le 1}
|P-Y|^{1-d+\lambda} |f(Y)|\, d\sigma(Y)\\
&\le 2\sup_{t>0}
\big|\int_{|Y-P|>t}
\nabla_1 \Theta (P,Y; A(P)) f(Y)\, d\sigma (Y)\big|
+C \mathcal{M}_{\partial D} (f) (P).
\endaligned
$$
This gives the desired estimate for $T^{1,*}_A$.
The estimate for $T^{2,*}_A$ follows from (\ref{global-behavior-estimate-2})
and (\ref{local-behavior-estimate-2})
in the same manner.
\end{proof}

\begin{lemma}\label{local-operator-boundedness-lemma}
Let $K(X,Y)$ be odd in $X$ and homogeneous of degree $1-d$ in $X$.
Assume that for all $0\le N\le N(d)$ where $N(d)$ is sufficiently large,
$\nabla^N_X K(X,Y)$ is continuous on $\mathbb{S}^{d-1}\times \mathbb{R}^d$
and $|\nabla^N_X K(X,Y)|\le C_0$ for $X\in \mathbb{S}^{d-1}$ and $Y\in \mathbb{R}^d$.
Let $f\in L^p(\partial D)$ for some $1<p<\infty$. Define
$$
\aligned
S^1(f)(P) & =\text{\rm p.v.}\int_{\partial D} K(P-Y, P) f(Y)\, d\sigma (Y),\\
S^2(f)(P) & =\text{\rm p.v.}\int_{\partial D} K(P-Y, Y) f(Y)\, d\sigma (Y),\\
S^{1,*}(f)(P) &= \sup_{\rho>0}\big|
\int_{Y\in \partial D,\, |Y-P|>\rho} K(P-Y, P) f(Y)\, d\sigma (Y)\big|,\\
S^{2,*}(f)(P) &= \sup_{\rho>0}\big|
\int_{Y\in \partial D, \, |Y-P|>\rho} K(P-Y, Y) f(Y)\, d\sigma (Y)\big|.
\endaligned
$$
Then $S^1(f)(P)$ and $S^2(f)(P)$ exist for a.e. $P\in \partial D$ and
$$
\| S^{1,*}(f)\|_p +\| S^{2,*} (f)\|_p \le C C_0 \| f\|_p,
$$
where $C$ depends only on $d$, $p$ and $M$.
\end{lemma}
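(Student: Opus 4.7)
The plan is to reduce the problem, via expansion in spherical harmonics in the first variable, to the $L^p$-boundedness on the Lipschitz graph $\partial D$ of a family of standard odd homogeneous convolution-type singular integrals, which are then handled by the classical Coifman--McIntosh--Meyer theorem (equivalently, the method of rotations applied to the Cauchy integral on Lipschitz curves). The $Y$-dependence, which is the only novelty beyond the purely convolution case, is essentially harmless because it enters as a bounded multiplier.

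More precisely, for each fixed $Y\in\mathbb{R}^d$ expand
\[
K(\omega,Y)=\sum_{k\text{ odd}} a_k(Y)\, Y_k(\omega),\qquad \omega\in\mathbb{S}^{d-1},
\]
in spherical harmonics; only odd $k$ occur because $K$ is odd in $X$. The hypothesis that $\nabla_X^N K(X,Y)$ is uniformly bounded on $\mathbb{S}^{d-1}\times\mathbb{R}^d$ up to order $N(d)$ gives, by standard estimates on spherical harmonic coefficients,
\[
\sup_{Y\in\mathbb{R}^d} |a_k(Y)| \le C\, C_0\, (1+k)^{-N(d)+c(d)},
\]
so the series is absolutely summable once $N(d)$ is chosen large. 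Splitting $K(P-Y,P)=\sum_k a_k(P)\,Y_k\!\bigl(\tfrac{P-Y}{|P-Y|}\bigr)|P-Y|^{1-d}$ reduces $S^{1,*}$ to the sum, with rapidly decaying weights, of operators of the form
\[
f\longmapsto a_k(P)\cdot \sup_{\rho>0}\Big|\int_{Y\in\partial D,\, |Y-P|>\rho} \frac{Y_k\!\bigl(\tfrac{P-Y}{|P-Y|}\bigr)}{|P-Y|^{d-1}}\, f(Y)\,d\sigma(Y)\Big|,
\]
and the multiplier $a_k(P)$ is a bounded pointwise factor. For $S^{2,*}$ the analogous reduction puts $a_k(Y)$ inside the integral, so it acts by pointwise multiplication on $f$ and contributes the same uniform bound.

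The remaining task is to bound the maximal singular integral on $\partial D$ with odd, homogeneous-of-degree $(1-d)$ kernel $Y_k(X/|X|)/|X|^{d-1}$. Here I would invoke the method of rotations of Calderón: write the odd spherical harmonic as a superposition of plane waves, reducing the principal-value operator on the graph $\partial D$ to an average over directions $\theta\in\mathbb{S}^{d-1}$ of the truncated directional Hilbert transform along Lipschitz curves obtained by slicing $\partial D$ by lines parallel to $\theta$. The $L^p(\partial D)$ boundedness of that directional Hilbert transform, uniformly in $\theta$ (with the relevant constants controlled by $M$ and polynomial factors in $k$ from the spherical harmonic norm), is exactly the Coifman--McIntosh--Meyer theorem for the Cauchy integral on Lipschitz curves, after an elementary change of variables from the slice to the curve's graph. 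The $L^p$ boundedness of the maximal truncations follows from that of the principal-value operators by the standard Cotlar inequality, and the existence a.e.\ of $S^1(f)$ and $S^2(f)$ then follows from $L^p$ density arguments applied to $f\in C_c^\infty$, for which the principal values exist by the oddness of $K$ and smoothness of $\partial D$ (again using that $K(0,Y)$-moments vanish).

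The one genuinely hard ingredient is the Coifman--McIntosh--Meyer bound entering at the level of a single spherical harmonic; the rest is bookkeeping. The main thing to watch is that the constants arising from CMM and from the spherical harmonic reconstruction grow only polynomially in $k$, so that the rapid decay of the $a_k(Y)$ guarantees convergence of the spherical harmonic series in operator norm. Once this is secured, the whole estimate assembles with a constant depending only on $d$, $p$, and $M$, as required.
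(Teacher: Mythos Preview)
Your proposal is correct and follows essentially the same route as the paper: expand $K(\cdot,Y)$ in spherical harmonics in the first variable to separate off the $Y$-dependence as bounded multipliers, then invoke Coifman--McIntosh--Meyer (via the method of rotations) on the resulting odd homogeneous convolution kernels, with the rapid decay of the coefficients absorbing the polynomial growth in the degree. The paper's proof is a two-sentence sketch pointing to the spherical harmonic decomposition and to \cite{coifman, Mitrea-Taylor-1999}; you have simply spelled out what that sketch means.
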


\begin{proof} By considering $C_0^{-1} K(X,Y)$, we may clearly assume that $C_0=1$.
In the special case where the integral
kernel $K(X,Y)$ is independent of $Y$, the result is a consequence of 
 \cite{coifman} on Cauchy integrals on Lipschitz curves.
The general case may be deduced from the special case by
the spherical harmonic decomposition (see e.g. \cite{Mitrea-Taylor-1999}).
Note that only the continuity condition in the variable $Y$ is need for
$\nabla_X^N K(X,Y)$.
\end{proof}

We are now in a position to give the proof of Theorem \ref{operator-boundedness-theorem}.

\noindent{\bf Proof of Theorem \ref{operator-boundedness-theorem}.}

If $A_0\in \Lambda(\mu,\lambda,\tau)$ is a constant matrix,
the boundedness of $T^{1,*}_{A_0}$ and $T^{2,*}_{A_0}$ on $L^p(\partial D)$
for $1<p<\infty$ follows from \cite{coifman} and is well known. 
Thus, in view of Lemma \ref{approximation-lemma-1},
we only need to treat the maximal singular integral operators with kernels
$\nabla_1 \Theta (P,Y; A(P))$ and $\nabla_2\Theta(P,Y; A(Y))$.

Let $K_A (X,Y)=\nabla_1 \Theta (X,0; A(Y))$. We may write
\begin{equation}\label{relation-1}
 \nabla_1 \Theta (P,Y; A(P))
=\nabla_1 \Theta (P-Y,0; A(P))=K_A (P-Y,P).
\end{equation}
Similarly, we have
\begin{equation}\label{relation-2}
\nabla_2 \Theta (P,Y; A(Y))
=\nabla_1 \Theta (Y-P,0; A(Y))
=K_A (Y-P,Y).
\end{equation}
Recall that $\Theta (X,0;A(Y))$
is the matrix of fundamental solutions for the constant coefficient operator
$L^Y=-\text{div}(A(Y)\nabla)$ with pole at the origin. It follows that
$K_A (X,Y)$ is odd in $X$ and homogeneous of degree $1-d$ in $X$. Moreover, for any $N\ge 1$,
$\nabla_X^N K_A (X,Y)$ is continuous on $(\mathbb{R}^d\setminus \{ 0\})\times \mathbb{R}^d$ and
\begin{equation}
|\nabla_X^N  K_A (X,Y)|\le C |X|^{1-d-N}
\end{equation}
where $C=C(d,m, \mu, N)>0$.
In view of (\ref{relation-1})-(\ref{relation-2}) and Lemma \ref{local-operator-boundedness-lemma},
we may conclude that the maximal singular integral operators with
kernels $\nabla_1 \Theta (P,Y;A(P))$ and $\nabla_2 \Theta (P,Y; A(Y))$
are bounded on $L^p(\partial D)$ and their operator norms are bounded by
$C(d,m, \mu, \lambda, p, M)$.

Finally we note that for $f$ with compact support,
the existence of $T_A^1(f)(P)$ and $T_A^2(f) (P)$  
for a.e. $P\in \partial D$ follows readily 
from estimates (\ref{local-behavior-estimate-1} ) and (\ref{local-behavior-estimate-2})
 and Lemma \ref{local-operator-boundedness-lemma}.
The general case follows from this and the boundedness of
$T_A^{1,*}$ and $T_A^{2,*}$ on $L^p(\partial D)$.
\qed
\bigskip

The following theorem will be useful to us in an approximation argument.

\begin{thm}\label{operator-approximation-theorem}
Let $\Omega$ be a bounded Lipschitz domain.
Let $T_A^1, \ T_{\widetilde{A}}^1, \
T^2_A,\ T_{\widetilde{A}}^2$ be defined by
(\ref{definition-of-T}), where $A,\widetilde{A}\in \Lambda(\mu, \lambda,\tau)$.
Suppose that diam$(\Omega)\le R$ for some $R\ge 1$.
Then for $1<p<\infty$,
\begin{equation}
\aligned
\| T_A^1 (f)-T_{\widetilde{A}}^1 (f)\|_p  & \le C_R \| A-\widetilde{A}\|_{C^{\lambda}(\mathbb{R}^d)}
\| f\|_p,\\
\| T_A^2 (f)-T_{\widetilde{A}}^2 (f)\|_p  & \le C_R \| A-\widetilde{A}\|_{C^{\lambda}(\mathbb{R}^d)}
\| f\|_p,\\
\endaligned
\end{equation}
where $C_R $ depends only on $d$, $m$, $\mu$, $\lambda$, $\tau$, $p$, the Lipschitz character
of $\Omega$ and $R$.
\end{thm}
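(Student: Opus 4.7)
The plan is to split the kernel $\nabla_1\Gamma_A(P,Y)-\nabla_1\Gamma_{\widetilde A}(P,Y)$ into a principal part that is a genuine (constant coefficient) singular integral in $P-Y$, plus a weakly singular remainder. Using (\ref{kernel-difference}),
\[
\nabla_1 \Gamma_A(P,Y) - \nabla_1 \Gamma_{\widetilde A}(P,Y) = \bigl[\nabla_1 \Theta(P,Y;A(P)) - \nabla_1 \Theta(P,Y;\widetilde A(P))\bigr] + \bigl[\Pi_A(P,Y)-\Pi_{\widetilde A}(P,Y)\bigr],
\]
so that $T_A^1(f) - T_{\widetilde A}^1(f) = U(f) + V(f)$, where $U$ is the singular operator with the first kernel and $V$ is the integral operator with the second.

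The remainder $V$ is easy to control. By Lemma \ref{kernel-estimate-lemma}, since $\text{diam}(\Omega)\le R$,
\[
|\Pi_A(P,Y)-\Pi_{\widetilde A}(P,Y)| \le C_R \|A-\widetilde A\|_{C^\lambda(\mathbb{R}^d)}\, |P-Y|^{1-d+\lambda}
\]
for $P,Y\in\partial\Omega$. The symmetric kernel $|P-Y|^{-(d-1)+\lambda}$ is integrable in $Y$ on the $(d-1)$-dimensional Lipschitz surface $\partial\Omega$ uniformly in $P$, with integral bounded by a multiple of $R^\lambda$; Schur's test then yields $\|V(f)\|_p \le C_R \|A-\widetilde A\|_{C^\lambda(\mathbb{R}^d)} \|f\|_p$ for every $1<p<\infty$.

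For the singular part $U$ I would mimic the proof of Theorem \ref{operator-boundedness-theorem}. Define
\[
K(X,Y) := \nabla_1 \Theta(X,0;A(Y)) - \nabla_1 \Theta(X,0;\widetilde A(Y)),
\]
so that $U(f)(P)=\text{p.v.}\int_{\partial\Omega} K(P-Y,P)\, f(Y)\,d\sigma(Y)$ in view of (\ref{relation-1}). The kernel $K(\cdot,Y)$ is odd and homogeneous of degree $1-d$ in $X$, inherits its smoothness in $X$ from $\Theta(X,0;E)$, and by (\ref{constant-coefficient-difference-estimate}) satisfies, for every $N\ge 0$,
\[
\sup_{X\in\mathbb{S}^{d-1},\, Y\in\mathbb{R}^d} |\nabla_X^N K(X,Y)| \le C\|A-\widetilde A\|_{L^\infty(\mathbb{R}^d)} \le C\|A-\widetilde A\|_{C^\lambda(\mathbb{R}^d)}.
\]
After reducing to a graph piece $\Omega\subset D$ by a partition of unity, Lemma \ref{local-operator-boundedness-lemma} applied to the normalized kernel $K/\|A-\widetilde A\|_{L^\infty(\mathbb{R}^d)}$ gives $\|U(f)\|_p \le C \|A-\widetilde A\|_{C^\lambda(\mathbb{R}^d)} \|f\|_p$ with the required dependence on parameters.

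The estimate for $T_A^2-T_{\widetilde A}^2$ follows by exactly the same scheme: split the kernel as $[\nabla_2\Theta(P,Y;A(Y))-\nabla_2\Theta(P,Y;\widetilde A(Y))] + [\Delta_A(P,Y)-\Delta_{\widetilde A}(P,Y)]$, handle the weakly singular remainder by Remark \ref{kernel-remark} and Schur's test, and handle the singular piece via Lemma \ref{local-operator-boundedness-lemma} using the representation $\nabla_2\Theta(P,Y;A(Y))=K(Y-P,Y)$ from (\ref{relation-2}). The main delicate point is that the gain of an extra factor $|P-Y|^\lambda$ in the error kernels $\Pi_A-\Pi_{\widetilde A}$ and $\Delta_A-\Delta_{\widetilde A}$ is essential to bring them into the Schur regime; but this is exactly the content of Lemma \ref{kernel-estimate-lemma} and Remark \ref{kernel-remark}, so no further work is needed and the rest of the argument is routine.
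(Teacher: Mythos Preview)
Your proposal is correct and follows essentially the same approach as the paper: the same decomposition into a constant-coefficient singular piece and a weakly singular remainder, handled respectively by Lemma \ref{local-operator-boundedness-lemma} (with (\ref{constant-coefficient-difference-estimate})) and by Lemma \ref{kernel-estimate-lemma}/Remark \ref{kernel-remark}. Your explicit invocation of Schur's test for the remainder and the normalization $K/\|A-\widetilde A\|_\infty$ for the singular piece are exactly the mechanisms the paper leaves implicit.
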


\begin{proof}
Recall that $\Pi_A (P,Y)=\nabla_1 \Gamma_A (P,Y)-\nabla_1 \Theta (P,Y; A(P))$.
Then
\begin{equation}
\aligned
&\nabla_1 \Gamma_A (P,Y)-\nabla_1 \Gamma_{\widetilde{A}} (P,Y)\\
& =\Pi_A (P,Y)-\Pi_{\widetilde{A}} (P,Y)
+\big\{ \nabla_1 \Theta (P-Y,0; A(P))
-\nabla_1 \Theta (P-Y,0; \widetilde{A}(P))\big\}.
\endaligned
\end{equation}
It follows from estimate (\ref{kernel-estimate-1}) 
that the norm of the integral operator with kernel
$\Pi_A (P,Y)-\Pi_{\widetilde{A}} (P,Y)$ on $L^p(\partial \Omega)$ is 
bounded by $ C_R \| A-\widetilde{A}\|_{C^\lambda (\mathbb{R}^d)}$
for $1\le p\le \infty$.
Note that
\begin{equation}
\big| \nabla_Z^N \big\{
\Theta (Z,0;A(P))-\Theta (Z,0; \widetilde{A}(P)\big\}\big|
\le C \| A-\widetilde{A}\|_\infty |Z|^{2-d-N}
\end{equation}
for any $N\ge 0$ and any $Z\in \mathbb{R}^d$, where $C$ depends only
on $d, m, \mu$ and $N$.
We may deduce from Lemma \ref{local-operator-boundedness-lemma} that
the norm of the integral operator with kernel
$\nabla_1 \Theta (P-Y,0; A(P))-\nabla_1 \Theta (P-Y, 0; \widetilde{A}(P))$
on $L^p(\partial\Omega)$ for $1<p<\infty$
is bounded by $C \| A-\widetilde{A}\|_\infty$.
This gives the desired estimate for $\| T_A^1(f)-T_{\widetilde{A}}^1 (f)\|_p$.
The estimate for $\| T_A^2 (f)-T^2_{\widetilde{A}} (f)\|_p$
follows from Remark \ref{kernel-remark} and Lemma \ref{local-operator-boundedness-lemma}
in the same manner.
\end{proof}

We end this section with a theorem on the nontangential maximal functions.
For $f\in L^p(\partial \Omega)$, consider the following two functions
\begin{equation}\label{function-u-w}
\aligned
u(X) & =\int_{\partial \Omega}
\nabla_X \Gamma_A (X,Y) f(Y)\, d\sigma (Y),\\
w(X)&=\int_{\partial \Omega}
\nabla_Y \Gamma_A (X,Y) f(Y)\, d\sigma (Y),
\endaligned
\end{equation}
defined on $\mathbb{R}^{d}\setminus \partial \Omega$.

\begin{thm}\label{maximal-function-theorem}
Let $\Omega$ be a bounded Lipschitz domain.
Let $u$ and $w$ be defined by (\ref{function-u-w}), where
$A\in \Lambda(\mu, \lambda,\tau)$.
Then for $1<p<\infty$,
$\| (u)^*\|_p +\| (w)^*\|_p \le C_p \| f\|_p$, where $(\cdot)^*$ denotes the nontangential maximal
function taken with respect to $\Omega_+=\Omega$ or $\Omega_-
=\mathbb{R}^d\setminus \overline{\Omega}$, and
$C_p$ depends only on $d$, $m$, $\mu$, $\lambda$, $\tau$, $p$ and
the Lipschitz character of $\Omega$.
\end{thm}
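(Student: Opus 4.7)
The plan is to reduce the theorem to a pointwise Cotlar-type estimate, namely
$$(u)^*(P)\le C\,T^{1,*}_A f(P)+C\,\mathcal{M}_{\partial\Omega}(f)(P),\qquad (w)^*(P)\le C\,T^{2,*}_A f(P)+C\,\mathcal{M}_{\partial\Omega}(f)(P),$$
for $P\in\partial\Omega$, valid whether the nontangential cone is taken in $\Omega_+$ or in $\Omega_-$. Once this is in hand, the $L^p$ conclusion follows immediately from Theorem \ref{operator-boundedness-theorem} applied to $T^{1,*}_A$ and $T^{2,*}_A$, together with the Hardy--Littlewood maximal theorem on the Lipschitz surface $\partial\Omega$.

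To prove the pointwise bound for $u$ (the argument for $w$ is parallel), fix $P\in\partial\Omega$ and $X$ in a nontangential cone at $P$, so that $|X-P|\le C_0\,\mathrm{dist}(X,\partial\Omega)$ for a constant $C_0$ depending on the Lipschitz character and the aperture. Set $\rho=2|X-P|$ and write $u(X)=I_1+I_2+I_3$ with
$$I_1=\int_{|Y-P|<\rho}\nabla_X\Gamma_A(X,Y)f(Y)\,d\sigma(Y),\qquad I_3=\int_{|Y-P|\ge\rho}\nabla_1\Gamma_A(P,Y)f(Y)\,d\sigma(Y),$$
and $I_2$ the remainder. By the definition of $T^{1,*}_A$, $|I_3|\le T^{1,*}_A f(P)$. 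For $I_1$ the cone condition forces $|X-Y|\gtrsim\rho$ for every $Y\in\partial\Omega$, while the triangle inequality gives $|X-Y|\lesssim\rho$ on the near zone; combining with the size estimate $|\nabla_X\Gamma_A(X,Y)|\le C|X-Y|^{1-d}$ from (\ref{size-estimate}) and the Ahlfors-regularity $\sigma(B(P,\rho)\cap\partial\Omega)\le C\rho^{d-1}$ of the Lipschitz surface yields $|I_1|\le C\mathcal{M}_{\partial\Omega}(f)(P)$.

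The heart of the matter is the estimate of $I_2$, which requires a uniform-in-scale H\"older modulus of continuity in the first variable for the kernel:
$$|\nabla_X\Gamma_A(X,Y)-\nabla_1\Gamma_A(P,Y)|\le C|X-P|^{\lambda}|P-Y|^{1-d-\lambda}\quad\text{when }|X-P|\le |P-Y|/4.$$
On small scales $|P-Y|\le 1$ this is immediate from the local H\"older gradient estimate (\ref{local-Holder-gradient-estimate}) applied to the $\mathcal{L}$-solution $X\mapsto\Gamma_A(X,Y)$ in the ball $B(P,|P-Y|/2)$, together with the size bound (\ref{size-estimate}). On large scales $|P-Y|\ge 1$ I would invoke Lemma \ref{global-behavior-lemma} to replace $\nabla_X\Gamma_A$ by $(I+\nabla\chi(X))\nabla_X\Gamma_{A_0}(X,Y)$ modulo an $O(|X-Y|^{1-d-\lambda_0})$ remainder; the $X$-regularity of the main term then comes from the constant-coefficient bound $|\nabla_X^2\Gamma_{A_0}|\le C|X-Y|^{-d}$ given by (\ref{constant-coefficient-estimate}) and from the H\"older regularity of the corrector gradient $\nabla\chi$ supplied by Schauder theory since $A\in C^\lambda$. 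A dyadic decomposition in $|Y-P|$ then produces $|I_2|\le C\mathcal{M}_{\partial\Omega}(f)(P)$.

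For $w(X)$ the same three-piece decomposition applies, but the kernel-regularity input is cleaner: the mixed derivative bound (\ref{2-derivative-estimate}) gives $|\nabla_X\nabla_Y\Gamma_A(X,Y)|\le C|X-Y|^{-d}$ at all scales, yielding Lipschitz, hence H\"older, continuity of $\nabla_Y\Gamma_A$ in $X$. The main obstacle is therefore the uniform-in-scale H\"older modulus of $\nabla_X\Gamma_A$ in its first variable needed for $I_2$ in the $u$ case; securing it requires patching the small-scale Schauder estimates with the large-scale Avellaneda--Lin homogenization asymptotics from Lemma \ref{global-behavior-lemma}. Once this modulus is established, the remainder of the argument is a routine Calder\'on--Zygmund/Cotlar decomposition followed by Theorem \ref{operator-boundedness-theorem} and the Hardy--Littlewood maximal theorem.
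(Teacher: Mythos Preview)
Your argument for $w$ is fine: the mixed-derivative bound (\ref{2-derivative-estimate}) gives genuine Calder\'on--Zygmund regularity of $\nabla_Y\Gamma_A(X,Y)$ in $X$ at \emph{all} scales, so the Cotlar decomposition goes through and $(w)^*(P)\le C\,T^{2,*}_Af(P)+C\,\mathcal{M}_{\partial\Omega}(f)(P)$ holds.

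For $u$, however, the claimed kernel regularity
\[
|\nabla_X\Gamma_A(X,Y)-\nabla_1\Gamma_A(P,Y)|\le C|X-P|^{\lambda}|P-Y|^{1-d-\lambda}
\]
is \emph{false} at large scales. When you invoke Lemma~\ref{global-behavior-lemma} and subtract, one of the pieces is
\[
\big(\nabla\chi(X)-\nabla\chi(P)\big)\nabla_P\Gamma_{A_0}(P,Y).
\]
H\"older regularity of the corrector gives $|\nabla\chi(X)-\nabla\chi(P)|\le C|X-P|^\lambda$, but the corrector is periodic, not decaying: there is no extra decay in $|P-Y|$. So this term is only $O\big(|X-P|^\lambda|P-Y|^{1-d}\big)$, not $O\big(|X-P|^\lambda|P-Y|^{1-d-\lambda}\big)$. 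Your dyadic sum over the large-scale annuli then acquires a factor $\log(\mathrm{diam}\,\Omega)$, and the bound $|I_2|\le C\,\mathcal{M}_{\partial\Omega}(f)(P)$ fails to be uniform over the dilation-invariant Lipschitz class $\mathbf{\Pi}(M,N)$.

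The paper sidesteps this by \emph{not} comparing $\nabla_X\Gamma_A$ at two base points. Instead, for $|X-P|\ge 1$ it uses (\ref{global-behavior-estimate-1}) to write $u(X)=(I+\nabla\chi(X))\nabla U(X)+O(\mathcal{M}_{\partial\Omega}f(P))$, with $U$ the constant-coefficient single layer for $A_0$; since $|\nabla\chi|\le C$, this reduces $(u)^*$ to $(\nabla U)^*$, which is controlled by the constant-coefficient maximal singular integral. Your Cotlar route can be repaired in the same spirit: do not try to dominate the corrector piece pointwise, but observe that
\[
\big(\nabla\chi(X)-\nabla\chi(P)\big)\int_{|Y-P|\ge\rho}\nabla_P\Gamma_{A_0}(P,Y)f(Y)\,d\sigma(Y)
\]
is bounded by $C\,T^{1,*}_{A_0}f(P)$. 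The correct Cotlar inequality is therefore
\[
(u)^*(P)\le C\,T^{1,*}_A f(P)+C\,T^{1,*}_{A_0} f(P)+C\,\mathcal{M}_{\partial\Omega}(f)(P),
\]
which still yields the theorem via Theorem~\ref{operator-boundedness-theorem}.
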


\begin{proof}
Let $\mathcal{M}_{\partial\Omega}$ denote the usual
Hardy-Littlewood maximal operator on $\partial\Omega$.
We claim that for any $P\in \partial \Omega$,
\begin{equation}\label{claim-1}
\aligned
(u)^* (P)
& \le C \mathcal{M}_{\partial \Omega} (f) (P)
+C\sup_{\rho>0}
\big|\int_{Y\in \partial\Omega,\, |Y-P|>\rho}
\nabla_1 \Theta (P,Y;A(P)) f(Y)\, d\sigma (Y)\big|\\
&\qquad\qquad\qquad
+C\sup_{\rho>0}
\big|
\int_{Y\in \partial\Omega;\, |Y-P|>\rho}
\nabla_1 \Gamma_{A_0} (P,Y) f(Y)\, d\sigma (Y)\big|,\\
(w)^* (P)
& \le C \mathcal{M}_{\partial \Omega} (f) (P)
+C\sup_{\rho>0}
\big|\int_{Y\in \partial\Omega,\, |Y-P|>\rho} 
\nabla_2 \Theta (P,Y;A(Y)) f(Y)\, d\sigma (Y)\big|\\
&\qquad\qquad\qquad
+C\sup_{\rho>0}
\big|
\int_{Y\in \partial\Omega,\, |Y-P|>\rho}
\nabla_2 \Gamma_{A_0} (P,Y) g(Y)\, d\sigma (Y)\big|,
\endaligned
\end{equation}
where $C$ is a ``good'' constant and $|g|\le C|f|$ on $\partial\Omega$.
Estimate $\|(u)^*\|_p +\| (w)^*\|_p\le C_p \| f\|_p$ follows from 
(\ref{claim-1}), as in the proof of Theorem \ref{operator-boundedness-theorem}.
We will give the proof for $(u)^*$. The estimate for $(w)^*$ may be carried out
in the same manner.

Fix $P\in \partial\Omega$. Let $X\in \mathbb{R}^d\setminus \partial\Omega$
such that $|X-P|<C_0 \text{dist} (X, \partial\Omega)$.
Let $r=|X-P|$.
If $r\ge 1$, we may use (\ref{global-behavior-estimate-1}) to show that
$$
|u(X)-(I+\nabla \chi (X)) \nabla U(X)|
\le C\int_{\partial \Omega} \frac{|f(Y)|\, d\sigma(Y)}{|X-Y|^{d-1+\lambda_0}}
\le C \mathcal{M}_{\partial \Omega} (f) (P),
$$
where $U(X)=\int_{\partial \Omega}  \Gamma_{A_0} (X,Y) f(Y)\, d\sigma(Y)$.
It follows that well known estimates for $\nabla U$ that
$$
\aligned
|u(X)| & \le C \mathcal{M}_{\partial \Omega}(f)(P) + C(\nabla U)^*(P)\\
 &\le C \mathcal{M}_{\partial \Omega}(f)(P) +C\sup_{\rho>0}
\big| \int_{|Y-P|>\rho}
\nabla_1 \Gamma_{A_0} (P,Y) f(Y)\, d\sigma(Y)\big|.
\endaligned
$$

Next suppose that $r=|X-P|< 1$. 
We write $u(X)=J_1 +J_2 +J_3$, where $J_1$, $J_2$, $J_3$ denote
the integrals of $\nabla_1\Gamma(X,Y) f(Y)$ over 
$E_1=\{ Y\in \partial \Omega: |Y-P|<r\}$,
$E_2=\{ Y\in\partial \Omega: r\le |Y-P|\le 1\}$,
$E_3=\{ Y\in \partial \Omega:  |Y-P|>1\}$,
respectively.
Clearly, $|J_1|\le C \mathcal{M}_{\partial \Omega} (f)(P)$.
For $J_2$, we use (\ref{local-behavior-estimate-1}) to obtain
$$
\aligned
|J_2| & \le \big|\int_{E_2} \nabla_1 \Theta(X,Y; A(Y)) f(Y)\, d\sigma (Y)\big|
+C\int_{E_2} \frac{|f(Y)|d\sigma (Y)}{|X-Y|^{d-1-\lambda}}\\
& \le \big| \int_{E_2}
\nabla_1 \Theta(P,Y;A(Y)) f(Y)\, d\sigma(Y)\big|
+C\int_{E_2}
\frac{|f(Y)|\, d\sigma (Y)}{|Y-P|^{d-1-\lambda}}\\
&\le 
2\sup_{\rho>0} \big| \int_{|Y-P|>\rho}
\nabla_1 \Theta(P,Y;A(Y)) f(Y)\, d\sigma (Y)\big|
+C \mathcal{M}_{\partial D} (f)(P).
\endaligned
$$
In view of (\ref{global-behavior-estimate-1}), we have
$$
\aligned
|J_3|
& \le C\int_{E_3} \frac{|f(Y)|\, d\sigma(Y)}{|X-Y|^{d-1+\lambda_0}}
+C\big|\int_{E_3}
\nabla_1 \Gamma_{A_0} (X,Y) f(Y)\, d\sigma (Y)\big|\\
&\le C \mathcal{M}_{\partial D} (f)(P)
+C\big|\int_{E_3} \nabla_1 \Gamma_{A_0} (P,Y) f(Y)\, d\sigma (Y)\big|.
\endaligned
$$
This, together with estimates of $J_1$ and $J_2$, yields
the desired estimate for $(u)^*(P)$.
\end{proof}

\section{Method of layer potentials}

In this section we fix $A\in \Lambda (\mu, \lambda,\tau)$
and let $\mathcal{L}=-\text{div}(A\nabla)$,
$\Gamma(X,Y)=\Gamma_A (X,Y)$.
Let $\Omega$ be a bounded Lipschitz domain in $\mathbb{R}^{d}$
and $n=(n_1, \cdots, n_d)$ the outward unit normal to $\partial\Omega$.
For $f\in L^p (\partial\Omega, \mathbb{R}^m)$, the single layer potential
$\mathcal{S}(f)=\mathcal{S}_A (f)=(u^1, \dots, u^m)$ is defined by
\begin{equation}\label{single-layer-potential}
u^\alpha(X)
=\int_{\partial\Omega}
\Gamma^{\alpha\beta} (X,Y) f^\beta(Y)\, d\sigma (Y),
\end{equation}
while the double layer potential $\mathcal{D} (f)
=\mathcal{D}_A (f)=(w^1, \dots, w^m)$ is defined by
\begin{equation}\label{doubel-layer-potential}
\aligned
w^\alpha (X)
& =\int_{\partial\Omega}
n_j (Y) a_{ij}^{\beta\gamma}(Y)
\frac{\partial}{\partial y_i}
\Gamma^{\alpha\beta} (X,Y) f^\gamma (Y)\, d\sigma (Y)\\
& =\int_{\partial\Omega}
\left( \frac{\partial}{\partial \nu_{A^*}}
\Gamma_{A^*}^\alpha (Y,X)\right)^\gamma f^\gamma (Y) \,
d\sigma (Y),
\endaligned
\end{equation}
where $\Gamma_{A^*}^\alpha =(\Gamma_{A^*}^{1,\alpha}, \dots,
\Gamma_{A^*}^{m,\alpha})$ and 
$\Gamma_{A^*} (X,Y)$ is the fundamental solution
for $\mathcal{L}^*=-\text{div}(A^*\nabla)$, with pole at $Y$.
Clearly, both $\mathcal{S} (f)$ and $\mathcal{D} (f)$
are solutions of $\mathcal{L} (u)=0$ in $\mathbb{R}^{d}\setminus \partial \Omega$.

The definitions of single and layer potentials are motivated by the following
Green's representation formula.

\begin{prop}\label{Cauchy-representation-lemma}
Let $u\in C^1 (\overline{\Omega})$. 
Suppose that $\mathcal{L}(u)=0$ in $\Omega$.
Then for any $X\in \Omega$,
\begin{equation}\label{Cauchy-representation-formula}
u(X)=\mathcal{S}\big(\frac{\partial u}{\partial \nu}\big) (X) -\mathcal{D}(u)(X),
\end{equation}
where $\frac{\partial u}{\partial \nu}$ denotes the conormal derivative 
of $u$ on $\partial\Omega$, defined by $\left(\frac{\partial u}{\partial\nu}\right)^\alpha
=n_i a_{ij}^{\alpha\beta} \frac{\partial u^\beta}{\partial x_j}$.
\end{prop}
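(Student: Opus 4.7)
The approach is classical Green's identity / integration by parts, with the (matrix-valued) fundamental solution playing the role of the test function and the pole $X \in \Omega$ excised by a small ball. Fix $X \in \Omega$, fix an index $\alpha_0 \in \{1,\dots,m\}$, and for small $\varep>0$ set $\Omega_\varep = \Omega \setminus \overline{B(X,\varep)}$ and $S_\varep=\partial B(X,\varep)$. Define the vector field
$$
v^\beta(Y) = \Gamma_A^{\alpha_0 \beta}(X,Y) = \Gamma_{A^*}^{\beta\alpha_0}(Y,X), \qquad \beta=1,\dots,m,
$$
where the second equality uses (\ref{adjoint-relation}). Then, as a function of $Y$, $v$ is (up to the choice of the column $\alpha_0$) the fundamental solution of the adjoint operator $\mathcal{L}^* = -\text{div}(A^*\nabla)$ with pole at $X$, so $\mathcal{L}^*_Y v = 0$ in $\Omega_\varep$.

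First I would record the Green's identity for our system. Integration by parts twice, using $(L u)^\alpha = -\partial_i(a_{ij}^{\alpha\beta}\partial_j u^\beta)$, yields for smooth $u,v$ on a domain $U$:
$$
\int_U \bigl[(\mathcal{L}u)\cdot v - u\cdot (\mathcal{L}^*v)\bigr] dY = \int_{\partial U} \Bigl[u\cdot \tfrac{\partial v}{\partial \nu_{A^*}} - v\cdot \tfrac{\partial u}{\partial \nu}\Bigr] d\sigma,
$$
where $\partial u/\partial \nu = n_i a_{ij}^{\alpha\beta}\partial_j u^\beta$ and $\partial v/\partial\nu_{A^*} = n_i a_{ji}^{\beta\alpha}\partial_j v^\beta$. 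Apply this on $U=\Omega_\varep$ with the chosen $v$. Since $\mathcal{L}u=0$ in $\Omega$ and $\mathcal{L}^*v=0$ in $\Omega_\varep$, the left-hand side vanishes. The boundary $\partial \Omega_\varep=\partial\Omega\cup S_\varep$ splits the surface integral into two pieces, giving
$$
0 = \int_{\partial\Omega}\Bigl[u\cdot \tfrac{\partial v}{\partial\nu_{A^*}} - v\cdot \tfrac{\partial u}{\partial \nu}\Bigr]d\sigma + \int_{S_\varep}\Bigl[u\cdot \tfrac{\partial v}{\partial\nu_{A^*}} - v\cdot \tfrac{\partial u}{\partial\nu}\Bigr]d\sigma,
$$
where on $S_\varep$ the unit normal points from $S_\varep$ into $B(X,\varep)$ (i.e.\ towards $X$), being outward with respect to $\Omega_\varep$.

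Next I would identify the $\partial\Omega$-piece with the layer potentials. Matching indices and using the alternate form of $\mathcal{D}$ in (\ref{doubel-layer-potential}) written via $\Gamma_{A^*}$, one checks
$$
\int_{\partial\Omega} u^\beta\, \tfrac{\partial v}{\partial\nu_{A^*}}^{\!\!\beta}\, d\sigma = \mathcal{D}(u)^{\alpha_0}(X), \qquad \int_{\partial\Omega} v^\beta\,\tfrac{\partial u}{\partial\nu}^{\!\!\beta}\, d\sigma = \mathcal{S}\bigl(\tfrac{\partial u}{\partial \nu}\bigr)^{\!\alpha_0}(X),
$$
so the $\partial\Omega$-term equals $\mathcal{D}(u)^{\alpha_0}(X) - \mathcal{S}(\partial u/\partial\nu)^{\alpha_0}(X)$. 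No symmetry assumption on $A$ is needed here, thanks to the second form of (\ref{doubel-layer-potential}).

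The main obstacle is the limit on $S_\varep$. Using $|v|\le C|X-Y|^{2-d}$ from (\ref{size-estimate}) and $\partial u/\partial\nu$ bounded on a compact set, $\int_{S_\varep} v\cdot(\partial u/\partial\nu)\,d\sigma = O(\varep)\to 0$. For the remaining term I would use Lemma \ref{local-behavior-lemma} (or, more precisely, the analogue in the Remark following it, estimate (\ref{local-behavior-estimate-2})) to replace $\nabla_Y \Gamma_A(X,Y)$ by $\nabla_2\Theta(X,Y;A(X))$ with error $O(|X-Y|^{1-d+\lambda})$; this error contributes $O(\varep^\lambda)$ after integration over $S_\varep$. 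Since $u$ is continuous at $X$, one may also replace $u(Y)$ by $u(X)$ with $o(1)$ error. One is then reduced to
$$
\lim_{\varep\to 0}\int_{S_\varep} u^\beta(X)\, n_i(Y)a_{ji}^{\alpha\beta}(X)\,\partial_{y_j}\Theta^{\alpha_0\alpha}(X,Y;A(X))\, d\sigma(Y),
$$
which, being the standard delta-mass behaviour of the constant-coefficient fundamental solution $\Theta(\cdot,\cdot;A(X))$ of $\mathcal{L}^{A(X)}$, equals $u^{\alpha_0}(X)$ once the orientation of the normal (outward to $\Omega_\varep$, hence inward to $B(X,\varep)$) is accounted for (the inward normal flips the sign of the classical $-\phi^{\alpha_0}(X)$ limit obtained from Green's identity on $B(X,\varep)$). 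Combining the two boundary contributions gives
$$
0 = \mathcal{D}(u)^{\alpha_0}(X) - \mathcal{S}\bigl(\tfrac{\partial u}{\partial\nu}\bigr)^{\!\alpha_0}(X) + u^{\alpha_0}(X),
$$
which is exactly (\ref{Cauchy-representation-formula}) after rearrangement; since $\alpha_0$ was arbitrary, the proof is complete.
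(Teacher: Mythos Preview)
Your argument is correct and follows the classical ``excise a small ball and pass to the limit'' route, which is a genuinely different organization from the paper's proof. The paper instead fixes a cutoff $\varphi\in C_0^\infty(B(X,2r))$ with $\varphi\equiv 1$ near $X$, applies the weak representation (\ref{fundamental-solution-representation}) directly to $u\varphi$, and then writes $\partial_i(u^\alpha\varphi)=\partial_i u^\alpha+\partial_i\bigl(u^\alpha(\varphi-1)\bigr)$; after an integration by parts in each piece (using $\mathcal{L}(u)=0$ for the first and $\mathcal{L}^*_Y\Gamma_{A^*}(\cdot,X)=0$ away from $X$ for the second), only the two $\partial\Omega$ boundary terms survive and give (\ref{Cauchy-representation-formula}) immediately. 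Thus the paper never needs to analyze the $S_\varep$ integral or invoke the approximation (\ref{local-behavior-estimate-2}) of $\nabla_Y\Gamma_A$ by $\nabla_2\Theta(\cdot,\cdot;A(X))$: the delta-mass behavior is already packaged in (\ref{fundamental-solution-representation}).

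What each approach buys: the cutoff method is slightly slicker because the singular limit is absorbed once and for all into (\ref{fundamental-solution-representation}), and the remaining manipulations are purely first-order integrations by parts. Your excised-ball argument is more explicit about \emph{where} the value $u(X)$ comes from, at the cost of reproving that sphere limit; your use of (\ref{local-behavior-estimate-2}) to reduce to the constant-coefficient case is a clean way to do this. One small remark: when you write Green's second identity on $\Omega_\varep$ you implicitly need $u\in C^2$, while the hypothesis is only $u\in C^1(\overline{\Omega})$; this is harmless because the identity you actually need,
\[
\int_{\partial\Omega_\varep}\Bigl(v\cdot\tfrac{\partial u}{\partial\nu}-u\cdot\tfrac{\partial v}{\partial\nu_{A^*}}\Bigr)\,d\sigma=0,
\]
follows from equating the two first-order (weak) integrations by parts
$\int_{\Omega_\varep}a_{ij}^{\alpha\beta}\partial_j u^\beta\partial_i v^\alpha\,dY
=\int_{\partial\Omega_\varep}v\cdot\tfrac{\partial u}{\partial\nu}\,d\sigma
=\int_{\partial\Omega_\varep}u\cdot\tfrac{\partial v}{\partial\nu_{A^*}}\,d\sigma$,
valid for $u\in C^1(\overline{\Omega})$ with $\mathcal{L}(u)=0$ and $v$ smooth on $\overline{\Omega_\varep}$. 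You might state it that way to avoid the apparent use of second derivatives of $u$.
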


\begin{proof}
Fix $X\in \Omega$. Choose $r>0$ so small that $B(X,4r)\subset \Omega$.
Let $\varphi\in C_0^\infty (B(X,2r))$ be such that $\varphi=1$ on $B(X,r)$.
It follows from (\ref{fundamental-solution-representation}) that
\begin{equation}
\aligned
u^\gamma (X)
= & (u\varphi)^\gamma (X)
=\int_{\Omega}
a_{ji}^{\beta\alpha}(Y) \frac{\partial}{\partial y_j}
\Gamma_{A^*}^{\beta\gamma} (Y,X) \cdot \frac{\partial }{\partial y_i} 
(u^\alpha \varphi )\, dY\\
& =\int_\Omega a_{ji}^{\beta\alpha} (Y) \frac{\partial}{\partial y_j}
\Gamma_{A^*}^{\beta\gamma} (Y,X)\cdot \frac{\partial u^\alpha}{\partial y_i}\, dX\\
&\qquad\qquad +\int_\Omega a_{ji}^{\beta\alpha} (Y) \frac{\partial}{\partial y_j}
\Gamma_{A^*}^{\beta\gamma} (Y,X)\cdot \frac{\partial }{\partial y_i}
\big\{ u^\alpha (\varphi-1)\big\} \, dX.\\
\endaligned
\end{equation}
Using integration by parts and $\mathcal{L}(u)=0$ in $\Omega$, we obtain
\begin{equation}\label{4-3}
\aligned
u^\gamma (X)
&=\int_{\partial\Omega}
n_j(Y) a_{ji}^{\beta\alpha}(Y)
\frac{\partial u^\alpha}{\partial y_i}
\cdot \Gamma_{A^*}^{\beta\gamma} (Y,X)\, d\sigma (Y)\\
&\qquad -\int_{\partial\Omega}
n_i(Y) a_{ji}^{\beta\alpha}(Y)
\frac{\partial}{\partial y_j}
\Gamma_{A^*} ^{\beta\gamma} (Y,X)\cdot u^\alpha (Y) \, d\sigma (Y).
\endaligned
\end{equation}
Since $\Gamma^{\alpha\beta}_{A^*} (Y,X)=\Gamma_A^{\beta\alpha}(X,Y)$,
this gives (\ref{Cauchy-representation-formula}).
\end{proof}

\begin{remark}\label{Green-representation-remark}
{\rm
Suppose that $u\in C^1(\overline{\Omega})$ 
and $\mathcal{L}(u)=-\frac{\partial f_i}{\partial x_i} + g$, where
$f_i, g\in C(\overline{\Omega}, \mathbb{R}^m)$ and $f_i=0$ on $\partial\Omega$.
Then
\begin{equation}\label{Green-representation-formula-1}
 u(X)=\mathcal{S}\big(\frac{\partial u}{\partial \nu}\big) (X) -\mathcal{D}(u)(X) +v(X),
\end{equation}
where
$$
v^\alpha (X)=\int_\Omega  \frac{\partial}{\partial y_i}
\Gamma ^{\alpha\beta} (X,Y) \cdot f_i^\beta (Y)\, dY
+\int_\Omega \Gamma^{\alpha\beta} (X,Y) g^\beta (Y)\, dY.
$$
}
\end{remark}

\begin{thm}\label{layer-potential-maximal-theorem}
Let $1<p<\infty$. Then
\begin{equation}\label{layer-potential-maximal-estimate}
\| \big(\nabla \mathcal{S}(f)\big)^*\|_p
+\| \big( \mathcal{D}(f)\big)^*\|_p \le
C_p \| f\|_p,
\end{equation}
where $C_p$ depends only on $d$, $m$, $\mu$, $\lambda$, $\tau$, $p$
and the Lipschitz character of $\Omega$.
\end{thm}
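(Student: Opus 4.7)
The plan is to derive the estimate (\ref{layer-potential-maximal-estimate}) as an immediate consequence of Theorem \ref{maximal-function-theorem}, since the kernels appearing in the single and double layer potentials are, up to bounded rearrangement, exactly the two kernels $\nabla_X\Gamma_A(X,Y)$ and $\nabla_Y\Gamma_A(X,Y)$ already handled there.

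For the single layer, I would differentiate (\ref{single-layer-potential}) under the integral sign to obtain
$$\frac{\partial}{\partial x_i}\mathcal{S}(f)^\alpha(X)=\int_{\partial\Omega}\frac{\partial}{\partial x_i}\Gamma^{\alpha\beta}(X,Y)\,f^\beta(Y)\,d\sigma(Y)\qquad\text{for }X\in\mathbb{R}^d\setminus\partial\Omega,$$
which, componentwise in $\alpha$ and $i$, is precisely the function $u$ from (\ref{function-u-w}) (with scalar density $f^\beta$ for each $\beta$). Theorem \ref{maximal-function-theorem} then yields $\|(\nabla\mathcal{S}(f))^*\|_p\le C_p\|f\|_p$ with a good constant, provided that the interchange of the $x_i$-derivative and the boundary integral is justified; this is routine since, away from $\partial\Omega$, the kernel $\nabla_X\Gamma(X,Y)$ is locally bounded in $X$ with an integrable (against $d\sigma$) majorant in $Y$, by the bound $|\nabla_X\Gamma_A(X,Y)|\le C|X-Y|^{1-d}$ from (\ref{size-estimate}).

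For the double layer, I would introduce the bounded density
$$\widetilde{f}_i^\beta(Y):=n_j(Y)\,a_{ij}^{\beta\gamma}(Y)\,f^\gamma(Y),$$
which satisfies $|\widetilde{f}(Y)|\le C|f(Y)|$ pointwise by the ellipticity bound (\ref{ellipticity}) together with $|n(Y)|=1$. Rewriting (\ref{doubel-layer-potential}) as
$$\mathcal{D}(f)^\alpha(X)=\int_{\partial\Omega}\frac{\partial}{\partial y_i}\Gamma^{\alpha\beta}(X,Y)\,\widetilde{f}_i^\beta(Y)\,d\sigma(Y)$$
puts $\mathcal{D}(f)$ in the form of the function $w$ in (\ref{function-u-w}), componentwise in $\alpha$, $i$, and $\beta$. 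Invoking Theorem \ref{maximal-function-theorem} with density $\widetilde{f}$ then gives
$$\|(\mathcal{D}(f))^*\|_p\le C_p\|\widetilde{f}\|_p\le C_p\|f\|_p.$$

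There is no real obstacle in this argument, as the heavy lifting has already been absorbed into Theorem \ref{maximal-function-theorem}. The latter in turn rests on the Coifman--McIntosh--Meyer bound for Cauchy-type singular integrals on Lipschitz graphs, combined with the three-regime kernel decomposition carried out in Lemma \ref{approximation-lemma-1}: small scales $|X-Y|\le 1$ are controlled by the frozen-coefficient kernel $\nabla_1\Theta(P,Y;A(P))$ via (\ref{local-behavior-estimate-1}), large scales are controlled by the Avellaneda--Lin comparison (\ref{global-behavior-estimate-1}) to $(I+\nabla\chi)\nabla\Gamma_{A_0}$, and the Hölder-type error terms are absorbed into the Hardy--Littlewood maximal operator on $\partial\Omega$. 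The proof of Theorem \ref{layer-potential-maximal-theorem} is thus just a matter of matching the kernels in (\ref{single-layer-potential})--(\ref{doubel-layer-potential}) with those covered by Theorem \ref{maximal-function-theorem}.
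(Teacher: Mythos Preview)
Your proposal is correct and follows exactly the paper's approach: the paper's proof is the single sentence ``This follows readily from Theorem \ref{maximal-function-theorem},'' and you have simply spelled out the kernel-matching details that justify this invocation. There is nothing to add.
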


\begin{proof} This follows readily from Theorem \ref{maximal-function-theorem}.
\end{proof}

For a function $u$ defined in $\mathbb{R}^{d}\setminus \partial\Omega$,
we will use $u_+$ and $u_-$ to denote its nontangential limits on $\partial\Omega$,
taken inside $\Omega$ and outside $\overline{\Omega}$ respectively.

\begin{thm}\label{trace-theorem}
Let $u=\mathcal{S}(f)$ for some $f\in L^p(\partial\Omega)$ and $1<p<\infty$.
Then for a.e. $P\in \partial\Omega$,
\begin{equation}\label{trace-formula}
\left(\frac{\partial u^\alpha}{\partial x_i}\right)_\pm  (P)
= \pm \frac12 n_i (P) b^{\alpha\beta}(P) f^\beta (P)
+\text{\rm p.v.}\int_{\partial\Omega}
\frac{\partial}{\partial P_i} \Gamma^{\alpha\beta } (P, Y) f^\beta (Y)\, d\sigma (Y),
\end{equation}
where $(b^{\alpha\beta} (P))_{m\times m}$
is the inverse matrix of $\big(a^{\alpha\beta}_{ij}(P)n_i(P)n_j(P)\big)_{m\times m}$.
\end{thm}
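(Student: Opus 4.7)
The strategy is to freeze the coefficients at $P$ and reduce the computation to the classical jump formula for single layer potentials of constant-coefficient elliptic systems. I decompose
\begin{equation*}
\frac{\partial u}{\partial x_i}(X) = \frac{\partial u_0}{\partial x_i}(X) + w(X),
\end{equation*}
where $u_0(X) = \int_{\partial\Omega} \Theta(X,Y; A(P)) f(Y)\, d\sigma(Y)$ is the single layer potential for the constant-coefficient operator $\mathcal{L}^{A(P)} = -\mbox{div}(A(P)\nabla)$ with density $f$, and $w(X) = \int_{\partial\Omega} R_P(X,Y) f(Y)\, d\sigma(Y)$ is generated by the residual kernel $R_P(X,Y) = \nabla_X \Gamma(X,Y) - \nabla_1 \Theta(X,Y; A(P))$.

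For the constant-coefficient piece $u_0$, the required trace identity
\begin{equation*}
\Bigl(\frac{\partial u_0^\alpha}{\partial x_i}\Bigr)_\pm (P) = \pm \tfrac{1}{2} n_i(P) b^{\alpha\beta}(P) f^\beta(P) + \text{p.v.}\!\int_{\partial\Omega} \frac{\partial}{\partial P_i}\Theta^{\alpha\beta}(P,Y; A(P)) f^\beta(Y)\, d\sigma(Y)
\end{equation*}
is the standard Verchota-type jump relation for constant-coefficient elliptic systems on Lipschitz domains (see \cite{Verchota-1984, fabes2, Gao-1991}); the inverse matrix $(b^{\alpha\beta}(P))$ arises from evaluating, for a.e.\ $P$ where $\partial\Omega$ admits a tangent plane, the flux of $n_j a_{ij}^{\beta\gamma}(P) \partial_{y_j}\Theta(\cdot,\cdot; A(P))$ against a shrinking hemispherical cap centered at $P$, and ellipticity (\ref{ellipticity}) guarantees invertibility of the matrix $(a_{ij}^{\alpha\beta}(P) n_i(P) n_j(P))_{\alpha\beta}$.

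The main new work is to show that the variable-coefficient correction $w$ is continuous across $\partial\Omega$ at $P$, with common boundary value $\int_{\partial\Omega} R_P(P,Y) f(Y)\, d\sigma(Y)$ from both sides. The crucial bound is $|R_P(X,Y)| \le C|X-Y|^{1-d+\lambda}$ for $X$ in a nontangential cone at $P$ and $Y \in \partial\Omega$. I obtain this by splitting
\begin{equation*}
R_P(X,Y) = \bigl[\nabla_X \Gamma(X,Y) - \nabla_1 \Theta(X,Y; A(X))\bigr] + \bigl[\nabla_1 \Theta(X,Y; A(X)) - \nabla_1 \Theta(X,Y; A(P))\bigr],
\end{equation*}
bounding the first bracket by $C|X-Y|^{1-d+\lambda}$ via Lemma \ref{local-behavior-lemma}, bounding the second by $C|X-P|^\lambda |X-Y|^{1-d}$ via (\ref{constant-coefficient-difference-estimate}) and $|A(X)-A(P)| \le \tau |X-P|^\lambda$, and using the nontangential comparison $|X-P| \le C|X-Y|$ for all $Y \in \partial\Omega$ to absorb $|X-P|^\lambda$ into $|X-Y|^\lambda$. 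Dominated convergence with the integrable dominant $C|P-Y|^{1-d+\lambda}$ then yields $w(X) \to \int_{\partial\Omega} R_P(P,Y) f(Y)\, d\sigma(Y)$ as $X \to P$ nontangentially from either side.

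Adding the two contributions and recombining the principal-value integral of $\nabla_1 \Theta(P,\cdot; A(P))$ with the absolutely convergent integral of $R_P(P,\cdot)$ into the single $\text{p.v.}$ integral of $\nabla_X \Gamma(P,\cdot)$ produces (\ref{trace-formula}). The main obstacle is the constant-coefficient jump formula itself, with the correct identification of $(b^{\alpha\beta}(P))$ as the inverse of $(a_{ij}^{\alpha\beta}(P) n_i(P) n_j(P))$; this is invoked from the literature rather than reproved. A minor ancillary point is that existence of the principal-value integral on the right-hand side of (\ref{trace-formula}), for a.e.\ $P \in \partial\Omega$ and $f \in L^p(\partial\Omega)$, is guaranteed by Theorem \ref{operator-boundedness-theorem}.
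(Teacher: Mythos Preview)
Your proposal is correct and follows essentially the same route as the paper: freeze coefficients at $P$, invoke the known constant-coefficient jump relation for the single layer potential of $-\text{div}(A(P)\nabla)$, and show the residual kernel $R_P(X,Y)$ is dominated by $C|P-Y|^{1-d+\lambda}$ so that dominated convergence applies. The only cosmetic difference is that the paper inserts $\nabla_1\Theta(X,Y;A(Y))$ as the intermediate term (using the third estimate of Lemma~\ref{local-behavior-lemma} and then $|A(Y)-A(P)|\le\tau|Y-P|^\lambda$), whereas you insert $\nabla_1\Theta(X,Y;A(X))$ (using the second estimate and $|A(X)-A(P)|\le\tau|X-P|^\lambda$ together with the nontangential comparison $|X-P|\le C|X-Y|$); both yield the same bound after using $|X-Y|\ge c|P-Y|$ for $X\in\gamma(P)$.
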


\begin{proof}
By Theorem \ref{layer-potential-maximal-theorem} we may assume that $f$ is a Lipschitz
function on $\partial\Omega$. 
Also it is known that
there exists  a set $F\subset \partial\Omega$ such that
$\sigma (\partial \Omega -F)=0$ and the trace formula (\ref{trace-formula})
holds for any $P\in F$ and for any $\Gamma (X,Y)=
\Gamma_E (X,Y)$ with constant matrix $E\in \Lambda(\mu, \lambda,\tau)$
(see e.g. \cite{Gao-1991, Dahlberg-Verchota-1990, Mitrea-Taylor-1999}).

Now fix $P\in F$ and
\begin{equation}
v^\alpha (X)= \int_{\partial\Omega}
\Theta^{\alpha\beta}(X,Y; A(P)) f^\beta (Y)\, d\sigma(Y)
\end{equation}
be the single layer potential for the elliptic operator $\mathcal{L}^{A(P)}
=-\text{div}(A(P)\nabla)$ with constant coefficients.
In view of (\ref{local-behavior-estimate-1}) 
and (\ref{constant-coefficient-difference-estimate}), we have
\begin{equation}
\aligned
& |\nabla_1 \Gamma_A (X,Y)-\nabla_1 \Theta(X,Y; A(P))|\\
&  \le |\nabla_1 \Gamma_A (X,Y)-\nabla_1\Theta (X,Y, A(Y))|
+|\nabla_1\Theta (X,Y; A(Y))-\nabla_1\Theta(X,Y; A(P))|\\
& 
\le C|X-Y|^{1-d+\lambda} +C |X-Y|^{1-d} |Y-P|^\lambda \\
& \le C|P-Y|^{1-d+\lambda}
\endaligned
\end{equation}
for any $X\in \gamma (P)=\{ Z\in \mathbb{R}^d\setminus \partial\Omega:
\text{ dist} (Z, \partial \Omega)<C_0 |Z-P|\}$ and $Y\in \partial\Omega$.
By Lebesgue's dominated convergence theorem, this implies that
\begin{equation}\label{4-5}
\aligned
(\nabla u^\alpha)_\pm (P)
=  &(\nabla v^\alpha)_\pm (P)
+ \int_{\partial\Omega}
\big\{
\nabla_1 \Gamma_A^{\alpha\beta} (P,Y)-\nabla_1 \Theta^{\alpha\beta} (P,Y; A(P))\big\}
f^\beta (Y)\, d\sigma (Y)\\
= & \pm \frac12 n (P) b^{\alpha\beta}(P) f^\beta P)
+\text{\rm p.v.}\int_{\partial\Omega}
\nabla_1\Gamma^{\alpha\beta }_A (P, Y) f^\beta (Y)\, d\sigma (Y).
\endaligned
\end{equation}
This finishes the proof.
\end{proof}

It follows from (\ref{trace-formula}) that if $u=\mathcal{S}(f)$,
\begin{equation}\label{tangential-relation}
n_j \left(\frac{\partial u^\alpha}{\partial x_i}\right)_+
-
n_i \left(\frac{\partial u^\alpha}{\partial x_j}\right)_+
=
n_j \left(\frac{\partial u^\alpha}{\partial x_i}\right)_-
-
n_i \left(\frac{\partial u^\alpha}{\partial x_j}\right)_-;
\end{equation}
i.e. $(\nabla_{tan} u)_+ = (\nabla_{tan} u)_-$ on $ \partial\Omega$.
Moreover, 
let $\left( \frac{\partial u}{\partial\nu}\right)^\alpha_\pm 
=n_i a_{ij}^{\alpha\beta} \left(\frac{\partial u^\beta}{\partial x_j}\right)_\pm $
on $\partial\Omega$.
Then $\left ( \frac{\partial u}{\partial\nu} \right)_\pm 
=(\pm \frac12 I +\mathcal{K}_A) (f)$, where
\begin{equation}\label{operator-K}
\big( \mathcal{K}_A (f)(P)\big)^\alpha
=\text{\rm p.v.}
\int_{\partial\Omega} K_A^{\alpha\beta} (P,Y) f^\beta (Y)\, d\sigma (Y)
\end{equation}
and
\begin{equation}\label{kernel-K}
K_A ^{\alpha\beta}
(P,Y)
=n_i(P) a_{ij}^{\alpha\gamma} (P)
\frac{\partial}{\partial P_j}
\Gamma_A^{\gamma\beta} (P,Y).
\end{equation}
In particular we have the jump relation
\begin{equation}\label{jump-relation}
f=\left(\frac{\partial u}{\partial\nu}\right)_+-\left(\frac{\partial u}{\partial\nu}\right)_-.
\end{equation}
We may deduce from Theorem \ref{operator-boundedness-theorem} that $\| \mathcal{K}_A (f)\|_p
\le C_p \| f\|_p$, where
$C_p$ depends only on $d$, $m$, $\mu$, $\lambda$, $\tau$, $p$ and
the Lipschitz character of $\Omega$.

\begin{remark}\label{mean-value-remark}
{\rm
Let $u=\mathcal{S}_A(f)$ for some $f\in L^p(\partial\Omega, \mathbb{R}^m)$ and $1<p<\infty$.
Then $\int_{\partial\Omega}\left(\frac{\partial u}{\partial\nu}\right)_+\, d\sigma =0$.
It follows that $((1/2)I+\mathcal{K}_A)\big( L^p(\partial\Omega, \mathbb{R}^m)\big)
\subset L^p_0(\partial\Omega, \mathbb{R}^m)$.
This implies that 
$$
\mathcal{K}_A \big(L^p_0(\partial\Omega, \mathbb{R}^m)\big)
\subset L^p_0(\partial\Omega, \mathbb{R}^m) \qquad \text{ for } 1<p<\infty.
$$
}
\end{remark}

Let $W^{1,p}(\partial\Omega,\mathbb{R}^m)$ denote the subspace of functions $f$ in
$L^p(\partial\Omega, \mathbb{R}^m)$ with tangential derivatives $\nabla_{tan} f$ in $L^p(\partial\Omega)$,
equipped with the scale-invariant norm
\begin{equation}\label{W-norm}
\| f\| _{1,p}
=\| \nabla_{tan} f\|_p + \big[ \sigma(\partial\Omega)\big]^{\frac{1}{1-d}} \| f\|_p.
\end{equation}
It follows from Theorem \ref{operator-boundedness-theorem} that for $1<p<\infty$,
\begin{equation}\label{W-estimate}
\| \mathcal{S}(f)\|_{1,p} \le C\| f\|_p
\end{equation}
where $C$ depends only on $d$, $m$,
$\mu$, $\lambda$, $\tau$, $p$ and the Lipschitz character of $\Omega$.

The next theorem gives the trace of the double layer potentials.

\begin{thm}\label{double-layer-trace-theorem}
Let $w=\mathcal{D}(f)$ where $f\in L^p(\partial\Omega)$ and $1<p<\infty$.
Then $$
w_\pm =\big( \mp \frac12 I +\mathcal{K}_{A^*}^*\big) (f) \qquad
\text{ on } \partial\Omega,
$$ 
where
$\mathcal{K}^*_{A^*}$ is the adjoint operator of $\mathcal{K}_{A^*}$, defined
by (\ref{operator-K}) and (\ref{kernel-K}).
\end{thm}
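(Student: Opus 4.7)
The plan is to adapt the argument used in the proof of Theorem~\ref{trace-theorem}: freeze the coefficients at a fixed boundary point $P$, invoke the classical trace formula for the constant-coefficient double layer, and show that the remainder's nontangential trace is an absolutely convergent integral via the H\"older estimates on the fundamental solution established in Section~2.

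First I rewrite $w^\alpha(X) = \int_{\partial\Omega} H_A^{\alpha\gamma}(X,Y)\, f^\gamma(Y)\,d\sigma(Y)$ with kernel
\[
H_A^{\alpha\gamma}(X,Y) := n_j(Y)\, a_{ij}^{\beta\gamma}(Y)\, \frac{\partial \Gamma_A^{\alpha\beta}}{\partial y_i}(X,Y).
\]
Using $(a^*)_{ij}^{\beta\gamma}(Y) = a_{ji}^{\gamma\beta}(Y)$ and the adjoint relation $\Gamma_{A^*}^{\gamma\alpha}(Y,P) = \Gamma_A^{\alpha\gamma}(P,Y)$, a direct index computation yields $H_A^{\alpha\beta}(P,Y) = K_{A^*}^{\beta\alpha}(Y,P)$, so that the conclusion of the theorem is equivalent to
\[
w_\pm(P) = \mp \tfrac{1}{2} f(P) + \text{p.v.}\int_{\partial\Omega} H_A(P,Y)\,f(Y)\,d\sigma(Y)\qquad \text{for a.e. }P\in\partial\Omega.
\]
By Theorem~\ref{layer-potential-maximal-theorem} and a density argument I may assume $f$ is Lipschitz on $\partial\Omega$.

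Next, fix $P$ in the full-measure set on which the constant-coefficient trace formula holds for every $E\in \Lambda(\mu,\lambda,\tau)$, and let $v_P$ denote the double layer potential of $f$ for the constant-coefficient operator $-\text{div}(A(P)\nabla)$, whose kernel $H_{A(P)}(X,Y)$ is obtained from $H_A$ by replacing $a_{ij}^{\beta\gamma}(Y)$ with $a_{ij}^{\beta\gamma}(P)$ and $\Gamma_A$ with $\Theta(\cdot,\cdot;A(P))$. The classical trace formula (see, e.g., \cite{Verchota-1984, Gao-1991, Mitrea-Taylor-1999}) gives
\[
(v_P)_\pm(P) = \mp \tfrac{1}{2} f(P) + \text{p.v.}\int_{\partial\Omega} H_{A(P)}(P,Y)\, f(Y)\,d\sigma(Y).
\]
I then split $H_A(X,Y)-H_{A(P)}(X,Y)$ into two pieces: the contribution of the coefficient jump is bounded by $C|Y-P|^\lambda |X-Y|^{1-d}$ using H\"older continuity of $A$ together with (\ref{size-estimate}); the contribution of $\nabla_Y \Gamma_A(X,Y) - \nabla_2 \Theta(X,Y;A(P))$ is bounded by $C|X-Y|^{1-d+\lambda} + C|Y-P|^\lambda |X-Y|^{1-d}$ upon combining (\ref{local-behavior-estimate-2}) (frozen at $A(Y)$) with (\ref{constant-coefficient-difference-estimate}). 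For $X$ in a nontangential cone at $P$ one has $|X-Y|\ge c|X-P|$ and $|X-Y|\sim |Y-P|$ whenever $|Y-P|\gtrsim |X-P|$, so both pieces are dominated by an integrable majorant $C|Y-P|^{1-d+\lambda}$ that is independent of $X$ in the cone.

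Lebesgue's dominated convergence theorem then yields
\[
(w - v_P)_\pm(P) = \int_{\partial\Omega}\bigl[H_A(P,Y) - H_{A(P)}(P,Y)\bigr] f(Y)\,d\sigma(Y),
\]
an absolutely convergent integral, and recombining the two principal-value integrals after adding the trace formula for $v_P$ delivers the desired identity in the reduction. The main technical step is producing the integrable majorant $C|Y-P|^{1-d+\lambda}$ uniformly over $X$ in the cone: this rests on the sharp H\"older bounds for $\nabla_Y \Gamma_A$ near the pole and on a careful treatment of the annulus $|Y-P|<|X-P|$, where the individual pointwise bounds degenerate but the contribution to the trace vanishes in the limit, exactly as in the estimate leading to (\ref{4-5}) in the proof of Theorem~\ref{trace-theorem}.
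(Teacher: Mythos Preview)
Your argument is correct, and the hedging in your final paragraph is actually unnecessary: for $X$ in the nontangential cone $\gamma(P)$ and $Y\in\partial\Omega$ one has $|X-P|\le C_0\,\mathrm{dist}(X,\partial\Omega)\le C_0|X-Y|$, hence $|P-Y|\le (C_0+1)|X-Y|$, so the bounds $C|X-Y|^{1-d+\lambda}$ and $C|Y-P|^\lambda|X-Y|^{1-d}$ are each dominated by $C|P-Y|^{1-d+\lambda}$ uniformly in $X\in\gamma(P)$. No separate treatment of the region $|Y-P|<|X-P|$ is needed.

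Your route differs from the paper's, however. You freeze the full double-layer kernel at $A(P)$ and invoke the constant-coefficient \emph{double}-layer trace formula directly. The paper instead exploits the identity $\partial_{x_i}\Theta(X,Y;E)=-\partial_{y_i}\Theta(X,Y;E)$ for constant $E$, together with (\ref{local-behavior-estimate-1}) and (\ref{local-behavior-estimate-2}), to show that $\partial_{y_i}\Gamma_A(X,Y)+\partial_{x_i}\Gamma_A(X,Y)=O(|X-Y|^{1-d+\lambda})$; this converts the double layer into (minus) the gradient of a single-layer-type integral, whose trace is then read off from the already-proved Theorem~\ref{trace-theorem}. The jump $\mp\tfrac12 f$ emerges from the algebraic collapse $n_i b^{\alpha\beta}n_j a_{ij}^{\beta\gamma}=\delta^{\alpha\gamma}$. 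The paper's approach is more economical in that it recycles Theorem~\ref{trace-theorem} rather than appealing to a separate constant-coefficient double-layer jump result; your approach is more direct and perhaps conceptually cleaner, at the cost of importing one additional classical fact.
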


\begin{proof}
Note that if $E$ is a constant matrix in $\Lambda(\mu, \lambda,\tau)$, then
\begin{equation}
\frac{\partial }{\partial x_i} \Theta(X,Y;E)
=-\frac{\partial}{\partial y_i} \Theta (X,Y;E).
\end{equation}
This, together with (\ref{local-behavior-estimate-1})
and (\ref{local-behavior-estimate-2}), shows that
\begin{equation}
\big|\frac{\partial}{\partial y_i}
\Gamma_A^{\alpha\beta} (X,Y)
+
\frac{\partial}{\partial x_i}
\Gamma_A^{\alpha\beta} (X,Y)\big|
\le C|X-Y|^{1-d+\lambda},
\end{equation}
for any $X,Y\in \mathbb{R}^d$.
Thus, as in the proof of Theorem \ref{trace-theorem}, it follows from the
Lebesgue's dominated convergence theorem that
$$
w^\alpha_\pm (P)
=-v^\alpha_\pm (P)
+
\int_{\partial\Omega}
n_j(Y) a_{ij}^{\beta\gamma}(Y)
\left\{ \frac{\partial}{\partial y_i}
\Gamma^{\alpha\beta}(P,Y)
+\frac{\partial}{\partial P_i}
\Gamma^{\alpha\beta} (P,Y)\right\} f^\gamma (Y)\, d\sigma (Y),
$$
where
\begin{equation}
v^\alpha (X)
=\frac{\partial}{\partial x_i}
\int_{\partial\Omega}
n_j (Y) a_{ij}^{\beta\gamma} (Y)
\Gamma^{\alpha\beta} (X,Y) f^\gamma (Y)\, d\sigma (Y).
\end{equation}
In view of the trace formula (\ref{trace-formula}), we have
\begin{equation}
\aligned
v_\pm^\alpha (P)
= & \pm \frac12 n_i(P) b^{\alpha\beta} (P) \cdot n_j(P) a_{ij}^{\beta\gamma}(P)
f^\gamma (P)\\
& \qquad
+\text{\rm p.v.}
\int_{\partial\Omega}
n_j (Y) a_{ij}^{\beta\gamma} (Y)
\frac{\partial}{\partial P_i}
\Gamma^{\alpha\beta} (P,Y)\cdot f^\gamma (Y)\, d\sigma (Y)\\
=&
\pm \frac12 f^\alpha (P)
+\text{\rm p.v.}
\int_{\partial\Omega}
n_j (Y) a_{ij}^{\beta\gamma} (Y)
\frac{\partial}{\partial P_i}
\Gamma^{\alpha\beta} (P,Y)\cdot f^\gamma (Y)\, d\sigma (Y).
\endaligned
\end{equation}
Thus
\begin{equation}
\aligned
w^\alpha_\pm (P)
 & =\mp\frac12
f^\alpha (P)
+\text{\rm p.v.}
\int_{\partial\Omega}
n_j (Y) a_{ij}^{\beta\gamma} (Y)
\frac{\partial}{\partial y_i}
\Gamma^{\alpha\beta} (P,Y)\cdot f^\gamma (Y)\, d\sigma (Y)\\
&
=\mp \frac12 f^\alpha (P)
+\text{\rm p.v.}
\int_{\partial\Omega}
K_{A^*}^{\beta\alpha} (Y,P) f^\beta (Y)\, d\sigma (Y),
\endaligned
\end{equation}
where $K_{A^*}^{\beta\alpha} (Y,P)$ is defined by
(\ref{kernel-K}), but with $A$ replaced by $A^*$.
This completes the proof.
\end{proof}

In summary, if $1<p<\infty$ and $f\in L^p(\partial\Omega)$,
then $u=\mathcal{S}(f)$ is a solution to the $L^p$ Neumann problem
 in $\Omega$ with boundary data $((1/2) I +\mathcal{K}_A)f$, while
$w=\mathcal{D}(f)$ is a solution to the $L^p$ Dirichlet problem
in $\Omega$ with boundary data $(-(1/2) I +\mathcal{K}_{A^*}^*)f$.
Furthermore, $(1/2)I +\mathcal{K}_A: L^p_0(\partial\Omega, \mathbb{R}^m)
\to L^p_0(\partial\Omega, \mathbb{R}^m)$
and $-(1/2)I +\mathcal{K}^*_{A^*}: L^p(\partial\Omega, \mathbb{R}^m)
\to L^p(\partial\Omega, \mathbb{R}^m)$ are bounded.
As a result, one may establish the existence of solutions in the
$L^p$ Neumann and Dirichlet problems in $\Omega$ by showing that
the operators $(1/2)I +\mathcal{K}_A$ and $-(1/2)I +\mathcal{K}_{A^*}^*$
are invertible on $L^p_0(\partial\Omega, \mathbb{R}^m)$ and 
$L^p(\partial\Omega, \mathbb{R}^m)$ respectively.
This is the so-called method of layer potentials.

In the remaining of this section we discuss the layer potentials for
 $\mathcal{L}_\varep=-\text{div}(A(\varep^{-1}X)\nabla)$.
Let $\Gamma_\varepsilon (X,Y)=\Gamma_{\varep, A} (X,Y)$ 
denote the matrix of fundamental solutions for
the operator $\mathcal{L}_\varepsilon$ on $\mathbb{R}^{d}$,
with pole at $Y$. By rescaling we have
\begin{equation}\label{e-relation}
\Gamma_\varepsilon (X,Y)=\varepsilon^{2-d} \Gamma(\varepsilon^{-1}X, 
\varepsilon^{-1}Y).
\end{equation}
Thus, by (\ref{size-estimate}),
\begin{equation}\label{e-size-estimate}
\aligned
& |\Gamma_\varepsilon (X,Y)|\le C |X-Y|^{2-d},\\
|\nabla_X \Gamma_\varepsilon (X,Y)|
& +
|\nabla_Y \Gamma_\varepsilon (X,Y)|
\le C |X-Y|^{1-d}
\endaligned
\end{equation}
for any $X,Y\in \mathbb{R}^{d}$.

For $f\in L^p (\partial\Omega)$, the single layer potential
$\mathcal{S}_\varepsilon(f)=(\mathcal{S}_\varepsilon^1(f),
 \dots, \mathcal{S}_\varepsilon^m( f))$ is defined by
\begin{equation}\label{e-single-layer-potential}
\mathcal{S}^\alpha_\varepsilon (f) (X)
=\int_{\partial\Omega}
\Gamma_\varepsilon^{\alpha\beta} (X,Y) f^\beta (Y)\, d\sigma (Y), \quad
\end{equation}
while the double layer potential $\mathcal{D}_\varep (f)
=(\mathcal{D}^1_\varep (f), \dots, \mathcal{D}_\varep^m (f))$ is defined by
\begin{equation}\label{e-doubel-layer-potential}
\mathcal{D}^\alpha_\varep (f) (X)
=\int_{\partial\Omega}
n_j(Y) a_{ij}^{\beta\gamma}(\varepsilon^{-1}Y)
\frac{\partial}{\partial y_i}
\Gamma_\varep^{\alpha\beta} (X,Y) f^\gamma (Y)\, d\sigma (Y).
\end{equation}
Clearly, both $\mathcal{S}_\varep (f)$ and $\mathcal{D}_\varep (f)$
are solutions of $\mathcal{L}_\varep (u)=0$ in $\mathbb{R}^{d}\setminus \partial \Omega$.

\begin{thm}\label{e-layer-potential-maximal-theorem}
Let $1<p<\infty$. Then
$$
\| \big(\nabla \mathcal{S}_\varepsilon (f)\big)^*\|_p +\| \big(\mathcal{D}_\varep (f)\big)^*\|_p
\le C_p\| f\|_p,
$$
where $C_p$ depends only on $d$, $m$, $\mu$, $\lambda$, $\tau$, $p$ and the Lipschitz
character of $\Omega$.
\end{thm}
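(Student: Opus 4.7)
The plan is to reduce everything to the $\varepsilon=1$ case already handled by Theorem \ref{maximal-function-theorem} via a rescaling argument, using the scale invariance of the Lipschitz character built into the class $\mathbf{\Pi}(M,N)$.

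First I would record the scaling behavior of the single and double layer potentials. Starting from the identity $\Gamma_\varepsilon(X,Y)=\varepsilon^{2-d}\Gamma(\varepsilon^{-1}X,\varepsilon^{-1}Y)$ in (\ref{e-relation}) and making the change of variable $Y=\varepsilon\widetilde Y$ in (\ref{e-single-layer-potential}) and (\ref{e-doubel-layer-potential}), together with $d\sigma(Y)=\varepsilon^{d-1}\,d\sigma(\widetilde Y)$, I would show
$$
\mathcal{S}_\varepsilon(f)(X)=\varepsilon\,\mathcal{S}(\widetilde f)(\varepsilon^{-1}X),\qquad
\mathcal{D}_\varepsilon(f)(X)=\mathcal{D}(\widetilde f)(\varepsilon^{-1}X),
$$
where $\widetilde f(\widetilde Y)=f(\varepsilon\widetilde Y)$ is defined on $\partial(\varepsilon^{-1}\Omega)$, $\mathcal{S}$ and $\mathcal{D}$ are the layer potentials for $\mathcal{L}=\mathcal{L}_1$ on $\varepsilon^{-1}\Omega$, and the outward unit normal at $\widetilde Y$ equals the outward unit normal at $Y=\varepsilon\widetilde Y$. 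Differentiating the first identity gives $\nabla\mathcal{S}_\varepsilon(f)(X)=(\nabla\mathcal{S}(\widetilde f))(\varepsilon^{-1}X)$.

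Next I would observe that nontangential cones rescale correctly: a cone at $P\in\partial\Omega$ for $\Omega$ maps under $X\mapsto\varepsilon^{-1}X$ to a cone at $\varepsilon^{-1}P\in\partial(\varepsilon^{-1}\Omega)$ with the same aperture, so
$$
\bigl(\nabla\mathcal{S}_\varepsilon(f)\bigr)^{*}(P)=\bigl(\nabla\mathcal{S}(\widetilde f)\bigr)^{*}(\varepsilon^{-1}P),\qquad
\bigl(\mathcal{D}_\varepsilon(f)\bigr)^{*}(P)=\bigl(\mathcal{D}(\widetilde f)\bigr)^{*}(\varepsilon^{-1}P).
$$
Taking $L^p(\partial\Omega)$ norms and changing variables yields
$$
\bigl\|(\nabla\mathcal{S}_\varepsilon(f))^{*}\bigr\|_{L^p(\partial\Omega)}^{p}
=\varepsilon^{d-1}\bigl\|(\nabla\mathcal{S}(\widetilde f))^{*}\bigr\|_{L^p(\partial(\varepsilon^{-1}\Omega))}^{p}
$$
and analogously for $\mathcal{D}_\varepsilon$, while $\|\widetilde f\|_{L^p(\partial(\varepsilon^{-1}\Omega))}^{p}=\varepsilon^{1-d}\|f\|_{L^p(\partial\Omega)}^{p}$.

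At this point I would invoke Theorem \ref{maximal-function-theorem} applied on the rescaled domain $\varepsilon^{-1}\Omega$ with the operator $\mathcal{L}=\mathcal{L}_1$ and the matrix $A$ (still in $\Lambda(\mu,\lambda,\tau)$). The crucial point is that $\Omega\in\mathbf{\Pi}(M,N)$ implies $\varepsilon^{-1}\Omega\in\mathbf{\Pi}(M,N)$ for every $\varepsilon>0$ (explicitly noted in the definition of $\mathbf{\Pi}(M,N)$), so the constant $C_p$ from Theorem \ref{maximal-function-theorem} is one and the same for every $\varepsilon^{-1}\Omega$. Combining this with the $\varepsilon^{d-1}$/$\varepsilon^{1-d}$ factors above makes them cancel, giving
$$
\bigl\|(\nabla\mathcal{S}_\varepsilon(f))^{*}\bigr\|_p+\bigl\|(\mathcal{D}_\varepsilon(f))^{*}\bigr\|_p\le C_p\|f\|_p
$$
with $C_p$ depending only on $d,m,\mu,\lambda,\tau,p$ and $(M,N)$. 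There is no genuine obstacle here; the only thing to be careful about is confirming that the kernel of $\mathcal{D}_\varepsilon$ (with its $A(\varepsilon^{-1}Y)$ coefficient) rescales exactly to the $\mathcal{D}$ kernel for the unscaled operator, and that the Lipschitz character really is preserved under dilation, both of which are already built into the setup.
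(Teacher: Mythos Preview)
Your proposal is correct and follows essentially the same approach as the paper: reduce to the case $\varepsilon=1$ by the scaling identity (\ref{e-relation}), use the dilation invariance of the Lipschitz character (so that $\varepsilon^{-1}\Omega$ has the same constants as $\Omega$), and invoke the $\varepsilon=1$ estimate. The only cosmetic difference is that the paper appeals to Theorem~\ref{layer-potential-maximal-theorem} on the rescaled domain (itself a one-line consequence of Theorem~\ref{maximal-function-theorem}), whereas you cite Theorem~\ref{maximal-function-theorem} directly; the substance is identical.
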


\begin{proof} Fix $\varep>0$ and define
\begin{equation}\label{e-domain}
\Omega_\varep
=\big\{ \varep^{-1}X:\ X\in \Omega\big\}.
\end{equation}
Let $u_\varep=\mathcal{S}_\varep (f)$.
It follows from (\ref{e-relation}) that
$u_\varep (X)=\varep v(\varep^{-1}X)$, where
$v(x)$ is the single layer potential on $\partial\Omega_\varep$ for the operator $\mathcal{L}$ 
with density $g$ given by $g(Y)=f(\varep Y)$.
Since $\Omega_\varep$ and $\Omega$ share the same Lipschitz character,
$\| (\nabla v)^*\|_{L^p(\partial\Omega_\varep)} \le C \| g\|_{L^p(\partial\Omega_\varep)}$,
where $C$ depends only on on $d$, $m$, $\mu$, $\lambda$, $\tau$, $p$ and the Lipschitz
character of $\Omega$, not on $\varep$.
By rescaling, this gives the desired estimate for $\nabla \mathcal{S}_\varep (f)$.
The estimate on $\mathcal{D}_\varep (f)$ follows in the same manner.
\end{proof}

The next theorem follows readily from Theorems \ref{trace-theorem}
and \ref{double-layer-trace-theorem} by rescaling.

\begin{thm}\label{e-trace-theorem}
Let $1<p<\infty$ and $f\in L^p(\partial\Omega)$.
Let $u_\varep =\mathcal{S}_\varep (f)$. Then $(\nabla u_\varep)_\pm (P)$
exists for a.e. $P\in \partial\Omega$ and
$\left(\frac{\partial u_\varep}{\partial \nu_\varep}\right)_\pm 
=(\pm \frac12 I +\mathcal{K}_{\varep, A}) (f)$, where 
\begin{equation}\label{e-single-layer-potential-trace}
\mathcal{K}_{\varep,A}^\alpha (f) (P)
=\text{\rm p.v.}
\int_{\partial\Omega_\varep}
K_A^{\alpha\beta} (\varep^{-1} P,  Y) f^\beta (\varep Y)\, d\sigma (Y)
\end{equation}
and the integral kernel $K_A^{\alpha\beta} (P,Y)$ on $\partial\Omega_\varep
\times \partial\Omega_\varep$ is given by
(\ref{kernel-K}). Similarly, if
$w_\varep =\mathcal{D}_\varep (f)$, then
$(w_\varep)_\pm =(\mp\frac12 I +\mathcal{K}_{\varep, A^*}^*)f$.
\end{thm}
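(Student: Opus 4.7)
The plan is to reduce the statement to the $\varepsilon=1$ case already handled in Theorems \ref{trace-theorem} and \ref{double-layer-trace-theorem} by a clean rescaling argument, exactly in the spirit of the proof of Theorem \ref{e-layer-potential-maximal-theorem}. Fix $\varepsilon>0$, set $\Omega_\varepsilon=\varepsilon^{-1}\Omega$, and define $g\in L^p(\partial\Omega_\varepsilon,\mathbb{R}^m)$ by $g(Z)=f(\varepsilon Z)$. Using the scaling identity $\Gamma_\varepsilon(X,Y)=\varepsilon^{2-d}\Gamma(\varepsilon^{-1}X,\varepsilon^{-1}Y)$ and the change of variables $Y=\varepsilon Z$ on $\partial\Omega$ (which produces a Jacobian factor $\varepsilon^{d-1}$ on the surface measure), one checks that
\[
u_\varepsilon(X)=\varepsilon\, v(\varepsilon^{-1}X),\qquad v(x)=\mathcal{S}_A(g)(x)\text{ on }\mathbb{R}^d\setminus\partial\Omega_\varepsilon.
\]
Differentiating gives $\nabla u_\varepsilon(X)=(\nabla v)(\varepsilon^{-1}X)$, with no factor of $\varepsilon$ left over. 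Since $\Omega$ and $\Omega_\varepsilon$ share the same Lipschitz character and outward unit normal (the normal at $P\in\partial\Omega$ equals the normal at $\varepsilon^{-1}P\in\partial\Omega_\varepsilon$), the nontangential limits $(\nabla u_\varepsilon)_\pm(P)$ exist a.e.\ on $\partial\Omega$ exactly when $(\nabla v)_\pm(\varepsilon^{-1}P)$ exist a.e.\ on $\partial\Omega_\varepsilon$, and the two are equal. This last existence is guaranteed by Theorem \ref{trace-theorem}.

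Next I would translate the conormal trace formula. At $P\in\partial\Omega$,
\[
\Bigl(\frac{\partial u_\varepsilon}{\partial\nu_\varepsilon}\Bigr)^\alpha_\pm(P)
=n_i(P)a_{ij}^{\alpha\beta}(\varepsilon^{-1}P)\Bigl(\frac{\partial v^\beta}{\partial x_j}\Bigr)_\pm(\varepsilon^{-1}P)
=\Bigl(\frac{\partial v}{\partial\nu}\Bigr)^\alpha_\pm(\varepsilon^{-1}P),
\]
where the last equality is just the definition of the unscaled conormal at $\varepsilon^{-1}P\in\partial\Omega_\varepsilon$ for the operator $\mathcal{L}=-\mathrm{div}(A\nabla)$. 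Applying Theorem \ref{trace-theorem} to $v=\mathcal{S}_A(g)$ on $\partial\Omega_\varepsilon$, this quantity equals
\[
\pm\tfrac12\, g^\alpha(\varepsilon^{-1}P)+\bigl(\mathcal{K}_A\, g\bigr)^\alpha(\varepsilon^{-1}P).
\]
Since $g(\varepsilon^{-1}P)=f(P)$ and the principal-value integral in $\mathcal{K}_A(g)(\varepsilon^{-1}P)$ is precisely $\mathcal{K}_{\varepsilon,A}(f)(P)$ by the definition in (\ref{e-single-layer-potential-trace}), we obtain
\[
\Bigl(\frac{\partial u_\varepsilon}{\partial\nu_\varepsilon}\Bigr)_\pm(P)=\bigl(\pm\tfrac12 I+\mathcal{K}_{\varepsilon,A}\bigr)(f)(P),
\]
which is the asserted formula.

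For the double layer, I would run the analogous computation. The scaling $\Gamma_\varepsilon(X,Y)=\varepsilon^{2-d}\Gamma(\varepsilon^{-1}X,\varepsilon^{-1}Y)$ combined with the factor $a_{ij}^{\beta\gamma}(\varepsilon^{-1}Y)$ appearing inside $\mathcal{D}_\varepsilon$ yields $w_\varepsilon(X)=\tilde v(\varepsilon^{-1}X)$ where $\tilde v=\mathcal{D}_A(g)$ on $\partial\Omega_\varepsilon$; here the power of $\varepsilon$ cancels because the derivative in the definition of the double layer introduces one more factor of $\varepsilon^{-1}$. Taking nontangential limits and invoking Theorem \ref{double-layer-trace-theorem} for $\tilde v$ gives $(w_\varepsilon)_\pm(P)=(\mp\tfrac12 I+\mathcal{K}^{*}_{\varepsilon,A^*})(f)(P)$, after identifying the pulled-back principal-value operator as $\mathcal{K}^{*}_{\varepsilon,A^*}$.

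The only thing one must be careful about is the bookkeeping: matching the $\varepsilon$-powers in the rescaling, verifying that outward unit normals transform trivially, and checking that the principal-value integrals on $\partial\Omega_\varepsilon$ pull back to the integrals defining $\mathcal{K}_{\varepsilon,A}$ and $\mathcal{K}^{*}_{\varepsilon,A^*}$ in (\ref{e-single-layer-potential-trace}). I do not foresee any genuine obstacle, since all dilation-invariance statements needed (Lipschitz character, unit normal, form of the conormal) hold without change, and no uniformity in $\varepsilon$ is being claimed beyond what rescaling provides.
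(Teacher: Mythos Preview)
Your proposal is correct and is precisely the rescaling argument the paper itself uses: the paper's proof consists of the single sentence that the theorem ``follows readily from Theorems \ref{trace-theorem} and \ref{double-layer-trace-theorem} by rescaling.'' Your write-up supplies the bookkeeping (matching $\varepsilon$-powers, invariance of the unit normal under dilation, and identification of the pulled-back principal-value operators with $\mathcal{K}_{\varepsilon,A}$ and $\mathcal{K}_{\varepsilon,A^*}^*$) that the paper leaves implicit, and there is no substantive difference in approach.
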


\section{Rellich property}

In this section we reduce the solvability of the $L^2$
Neumann, Dirichlet and regularity problems
for $\mathcal{L}(u)=0$ in Lipschitz domains
to certain boundary Rellich estimates.

\begin{definition}
{\rm
Let $\mathcal{L}=-\text{div}(A(X)\nabla)$
and $\Omega$ be a bounded Lipschitz domain
with connected boundary.
We say that $\mathcal{L}$ has the Rellich property in $\Omega$
with constant $C=C(\Omega)$ if 
$ \|\nabla u \|_2 \le C \|\frac{\partial u}{\partial\nu}\|_2$
and $\|\nabla u\|_2 \le C\|\nabla_{tan} u\|_2$,
whenever $u$ is a solution to $\mathcal{L}(u)=0$ in $\Omega$
such that $(\nabla u)^*\in L^2(\partial\Omega)$ and
$\nabla u$ exists n.t. on $\partial\Omega$.
}
\end{definition}

\begin{definition}
{\rm
We say that the Dirichlet problem $(D)_p$ for $\mathcal{L}(u)=0$ in $\Omega$ is uniquely
solvable with estimate $\|(u)^*\|_p \le C\| u\|_p$, if for any $f\in L^p(\partial\Omega,
\mathbb{R}^m)$, there exists a unique solution to $\mathcal{L}(u)=0$
in $\Omega$ with the property that $(u)^*\in L^p(\partial\Omega)$
and $u=f$ n.t. on $\partial\Omega$, and the solution satisfies
$\|(u)^*\|_p \le C\| f\|_p$.

We say that the regularity problem $(R)_p$ for $\mathcal{L}(u)=0$ in $\Omega$ is uniquely
solvable with estimate $\|(\nabla u)^*\|_p \le C\| u\|_{1,p}$, if for any $f\in W^{1,p}(\partial\Omega,
\mathbb{R}^m)$, there exists a unique solution to $\mathcal{L}(u)=0$
in $\Omega$ with the property that $(\nabla u)^*\in L^p(\partial\Omega)$
and $u=f$ n.t. on $\partial\Omega$, and the solution satisfies
$\|(u)^*\|_p \le C\| f\|_{1,p}$.

We say that the Neumann problem $(N)_p$ for $\mathcal{L}(u)=0$ in $\Omega$ is uniquely
solvable with estimate $\|(\nabla u)^*\|_p \le C\| \frac{\partial u}{\partial\nu}\|_p$, 
if for any $f\in L_0^p(\partial\Omega,
\mathbb{R}^m)$, there exists a solution, unique up to constants, to $\mathcal{L}(u)=0$
in $\Omega$ with the property that $(\nabla u)^*\in L^p(\partial\Omega)$
and $\frac{\partial u}{\partial\nu}=f$ n.t. on $\partial\Omega$, and the solution satisfies
$\|(\nabla u)^*\|_p \le C\| f\|_p$.
}
\end{definition}

The following two theorems are the main results of this section.
The first theorem treats the solvability in small scale - the constant $C$
in the nontangential-maximal-function estimates 
in (\ref{Neumann-regularity-estimate}) depends on diam$(\Omega)$, if
diam$(\Omega)\ge 1$. 
The estimates in the second theorem
are scale-invariant.
As a result, by rescaling,
they leads to uniform estimates 
in a Lipschitz domain
for the family of elliptic operators $\{ \mathcal{L}_\varep\}$.

\begin{thm}\label{local-Rellich-imply-solvability}
Let $\mathcal{L}=-\text{div}(A\nabla)$ with $A
\in \Lambda (\mu, \lambda, \tau)$ and $A^*=A$. Let $R\ge 1$.
Suppose that for any Lipschitz domain $\Omega$ with diam$(\Omega)\le (1/4)$
and connected boundary, there exists
$C(\Omega)$ depending only on the Lipschitz character of $\Omega$ such that for each $s\in (0,1]$,
$\mathcal{L}_s =-\text{div}\big( (sA+(1-s)I)\nabla \big)$
 has the Rellich property in $\Omega$ with 
constant $C(\Omega)$.
Then for any Lipschitz domain $\Omega$ with diam$(\Omega)\le R$ and
connected boundary, $(R)_2$ and $(N)_2$ 
for $\mathcal{L}(u)=0$ in $\Omega$ are uniquely solvable and
the solutions satisfy the estimates
\begin{equation}\label{Neumann-regularity-estimate}
\| (\nabla u)^*\|_2 \le C \| \frac{\partial u}{\partial \nu}\|_2
\quad \text{ and }
\quad
\|(\nabla u)^*\|_2 \le C \| \nabla_{tan} u\|_2,
\end{equation}
where $C$ depends only on $\mu$, $\lambda$, $\tau$, the 
Lipschitz character of $\Omega$ and $R$ (if $\text{diam}(\Omega)\ge 1$).
Furthermore, the $L^2$ Dirichlet problem in $\Omega$ is uniquely solvable
with the estimate $\|(u)^*\|_2 \le C \| u\|_2$.
\end{thm}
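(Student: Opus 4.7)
The strategy is the classical method of layer potentials combined with the method of continuity along the homotopy $A_s = sA + (1-s)I$, $s \in [0,1]$, which connects $\mathcal{L}$ to the vector Laplacian $-\Delta I$. By Theorem \ref{trace-theorem} and Remark \ref{mean-value-remark}, the $(N)_2$ problem for $\mathcal{L}(u)=0$ with datum $f \in L^2_0(\partial\Omega,\mathbb{R}^m)$ reduces to inverting $\frac{1}{2}I + \mathcal{K}_A$ on $L^2_0(\partial\Omega,\mathbb{R}^m)$: seek $u = \mathcal{S}(g)$ and solve $f = (\frac{1}{2}I + \mathcal{K}_A)g$. By Theorem \ref{double-layer-trace-theorem} together with $A^* = A$, the $(D)_2$ problem reduces to inverting $-\frac{1}{2}I + \mathcal{K}_A^*$ on $L^2(\partial\Omega,\mathbb{R}^m)$ by seeking $u = \mathcal{D}(g)$. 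For $(R)_2$ the single layer again suffices, since $\mathcal{S}$ maps $L^2$ into $W^{1,2}(\partial\Omega)$ by (\ref{W-estimate}) and the Rellich equivalence $\|\partial u/\partial\nu\|_2 \approx \|\nabla_{tan} u\|_2$ reduces prescribed tangential data to an $(N)_2$-type problem.

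The central analytic step is to establish the coercivity of $\pm \frac{1}{2}I + \mathcal{K}_{A_s}$ uniformly in $s$, from the Rellich hypothesis on $\mathcal{L}_s$. Given $g \in L^2$ and $u = \mathcal{S}_{A_s}(g)$, the jump relation (\ref{jump-relation}) for $\mathcal{L}_s$ gives $g = (\partial u/\partial\nu_s)_+ - (\partial u/\partial\nu_s)_-$, and (\ref{tangential-relation}) gives $(\nabla_{tan} u)_+ = (\nabla_{tan} u)_-$. Applying both forms of the Rellich property in $\Omega_+ = \Omega$ and in $\Omega_- = \mathbb{R}^d \setminus \overline{\Omega}$ (the exterior version obtained by applying the hypothesis to truncations $\Omega_- \cap B(0,N)$ and passing to the limit, justified by the $|X|^{2-d}$, $|X|^{1-d}$ decay of $u$ and $\nabla u$ from (\ref{size-estimate}) since $d \ge 3$), together with equality of the tangential traces, yields
\[
\big\|(\partial u/\partial\nu_s)_+\big\|_2 \,\le\, C\big\|(\nabla_{tan} u)_+\big\|_2 \,=\, C\big\|(\nabla_{tan} u)_-\big\|_2 \,\le\, C\big\|(\partial u/\partial\nu_s)_-\big\|_2,
\]
and the reverse inequality. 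Combined with the jump relation, this gives $\|g\|_2 \le C\|(\pm \frac{1}{2}I + \mathcal{K}_{A_s})g\|_2$ with $C$ uniform in $s$. The parallel argument applied to $w = \mathcal{D}_{A_s}(g)$ --- whose conormal derivative is continuous across $\partial\Omega$ and whose Dirichlet trace jumps by $-g$ --- yields the corresponding uniform coercivity of $\pm \frac{1}{2}I + \mathcal{K}_{A_s}^*$.

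The method of continuity now closes the argument. By Theorem \ref{operator-approximation-theorem}, the map $s \mapsto \mathcal{K}_{A_s}$ is Lipschitz in the $L^2$ operator norm, since $\|A_s - A_{s'}\|_{C^\lambda(\mathbb{R}^d)} \le C|s - s'|$. On a Hilbert space a bounded operator is invertible iff it and its adjoint are both bounded below, so the set $S = \{s \in [0,1]:\pm \frac{1}{2}I + \mathcal{K}_{A_s} \text{ is invertible on } L^2\}$ is both open and closed. It contains $s = 0$ by the classical Verchota/Dahlberg--Kenig--Verchota theory for the layer potentials of $-\Delta I$. Hence $S = [0,1]$, and in particular $\pm \frac{1}{2}I + \mathcal{K}_A$ and $\pm \frac{1}{2}I + \mathcal{K}_A^*$ are invertible on the relevant $L^2$ spaces. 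Theorem \ref{layer-potential-maximal-theorem} then delivers the nontangential maximal function bounds (\ref{Neumann-regularity-estimate}) and $\|(u)^*\|_2 \le C\|u\|_2$; uniqueness is the standard energy identity exploiting $A = A^*$.

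The principal obstacle is the diameter gap: the hypothesis supplies Rellich for $\mathcal{L}_s$ only when $\text{diam}(\Omega) \le 1/4$, while the conclusion permits $\text{diam}(\Omega) \le R$. I would close this gap by first upgrading the hypothesis to the global Rellich property for $\mathcal{L}_s$ on every Lipschitz $\Omega$ with $\text{diam}(\Omega) \le R$ and connected boundary, with constant depending on $R$ and the Lipschitz character. The upgrade uses a cover of $\partial\Omega$ by $O(R^{d-1})$ boundary patches of diameter $\le 1/4$; on each patch the small-domain hypothesis, localized in the spirit of (\ref{Rellich-estimate-1.1}), yields an estimate with interior remainder $\int_\Omega |\nabla u|^2\,dX$; summing over the cover and absorbing the remainder via the Green identity $\int_\Omega A_s\nabla u \cdot \nabla u\,dX = \int_{\partial\Omega}(\partial u/\partial\nu_s)\cdot u\,d\sigma$ together with a boundary trace inequality and $\varepsilon$-Cauchy--Schwarz yields the global Rellich property with the stated dependence. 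Once this is in place, the preceding argument runs on $\Omega$ itself and produces (\ref{Neumann-regularity-estimate}).
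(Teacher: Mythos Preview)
Your overall strategy---layer potentials, jump relations, Rellich on both sides of $\partial\Omega$, and continuity in $s$ via Theorem~\ref{operator-approximation-theorem}---is exactly the paper's. Two steps, however, do not work as you describe them.

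First, the exterior Rellich estimate. You propose to apply the (upgraded) Rellich property to the truncations $\Omega_- \cap B(0,N)$ and let $N\to\infty$. These domains have \emph{disconnected} boundary, so neither the hypothesis nor your upgraded version applies; and even if one ignored that, the Rellich constant would a priori depend on $N$, so the $\partial B(0,N)$ contributions need not vanish in the limit. The paper avoids this entirely: the localization step (Lemma~\ref{local-interior-exterior-lemma}) covers $\partial\Omega$ by small patches $\Delta(r)$ and applies the small-domain hypothesis to the cylinders $Z(tr)$, producing the estimates~(\ref{interior-exterior-Rellich}) with a solid remainder $\frac{C}{r_0}\int_{N_\pm}|\nabla u|^2$ over a collar $N_\pm$ of $\partial\Omega$. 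This is symmetric in $\pm$ because locally the two sides of a Lipschitz graph look the same. Only afterwards, in Lemma~\ref{invertibility-of-Neumann-operator}, is the collar integral absorbed: for $u=\mathcal{S}_A(f)$ one uses the interior and exterior Green identities (\ref{Green-identity}), (\ref{exterior-Green-identity})---the latter justified by the $O(|X|^{2-d})$ decay---together with Poincar\'e on $\partial\Omega$ (available since $\big(\frac{\partial u}{\partial\nu}\big)_-$ has mean zero when $f\in L^2_0$) to get~(\ref{Green-estimate}), and then $\varepsilon$-Cauchy.

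Second, Dirichlet uniqueness. The energy identity you invoke requires $(\nabla u)^*\in L^2(\partial\Omega)$, but for $(D)_2$ one only assumes $(u)^*\in L^2$. The paper instead constructs the Green's matrix~(\ref{Green's-function}) using the already-solved $(R)_2$ problem, writes $u(Y)$ via the cutoff representation~(\ref{R-10})--(\ref{R-12}), and passes to the limit using dominated convergence and $u=0$ n.t.\ on $\partial\Omega$.
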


Recall that $C$ is called a ``good'' constant if it depends only on
$d$, $m$, $\mu$, $\lambda$, $\tau$ and the Lipschitz character of $\Omega$.

\begin{thm}\label{Rellich-imply-solvability}
Let $\mathcal{L}=-\text{div}(A\nabla)$ with $A
\in \Lambda (\mu, \lambda, \tau)$ and $A^*=A$.
Suppose that for any Lipschitz domain $\Omega$ with connected boundary, there exists a ``good''
constant
$C(\Omega)$ such that for each $s\in (0,1]$,
$\mathcal{L}_s =-\text{div}\big( (sA+(1-s)I)\nabla \big)$
 has the Rellich property in $\Omega$ with 
constant $C(\Omega)$.
Then for any Lipschitz domain $\Omega$ with connected boundary,
$(R)_2$ and $(N)_2$ 
for $\mathcal{L}(u)=0$ in $\Omega$ are uniquely solvable and
the solutions satisfy the estimates in (\ref{Neumann-regularity-estimate})
with a ``good'' constant $C$.
Furthermore, the $L^2$ Dirichlet problem in $\Omega$ is uniquely solvable
with the estimate $\|(u)^*\|_2 \le C \| u\|_2$ for a ``good''
constant $C$.
\end{thm}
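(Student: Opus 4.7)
The plan is to deduce the three solvability statements by the method of layer potentials, reducing $(N)_2$, $(D)_2$, and $(R)_2$ respectively to the invertibility of $\tfrac12 I + \mathcal{K}_A$ on $L^2_0(\partial\Omega,\mathbb{R}^m)$, of $-\tfrac12 I + \mathcal{K}^*_{A^*}$ on $L^2(\partial\Omega,\mathbb{R}^m)$, and of $\mathcal{S}_A:L^2\to W^{1,2}$ on the appropriate subspaces. The hypothesis supplies Rellich estimates with a good constant not only for $\mathcal{L}=\mathcal{L}_1$ but for every member of the family $\mathcal{L}_s=-\text{div}((sA+(1-s)I)\nabla)$, $s\in[0,1]$, so Verchota's continuity method can be run along this interpolation.

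\textbf{Step 1 (Coercivity from Rellich).} For $g\in L^2(\partial\Omega,\mathbb{R}^m)$ let $u=\mathcal{S}_A(g)$. The tangential and jump formulas (\ref{tangential-relation})--(\ref{jump-relation}) read
\[
(\nabla_{tan}u)_+ = (\nabla_{tan}u)_-,\qquad g=\Bigl(\tfrac{\partial u}{\partial\nu}\Bigr)_+-\Bigl(\tfrac{\partial u}{\partial\nu}\Bigr)_-.
\]
Applying the two Rellich estimates of the Rellich property to $u$ inside $\Omega$, and analogous estimates in the exterior $\Omega_-$ (obtained by truncating $\Omega_-\cap B(0,R)$ and using the decay $|\nabla u(X)|=O(|X|^{1-d})$ coming from (\ref{global-behavior-estimate-1}) to kill the boundary contribution on $\partial B(0,R)$ as $R\to\infty$), one chains
\[
\|(\tfrac12 I+\mathcal{K}_A)g\|_2\le C\|(\nabla u)_+\|_2\le C\|(\nabla_{tan}u)_+\|_2=C\|(\nabla_{tan}u)_-\|_2\le C\|(\nabla u)_-\|_2\le C\|(-\tfrac12 I+\mathcal{K}_A)g\|_2,
\]
and the reverse. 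Combined with the jump relation this yields the coercivity estimates
\[
\|g\|_2\le C\,\|(\pm\tfrac12 I+\mathcal{K}_A)g\|_2,\qquad \|g\|_2\le C\,\|\mathcal{S}_A(g)\|_{1,2},
\]
with a good constant $C$; the same argument applied to $\mathcal{L}_s$ gives these bounds uniformly in $s\in[0,1]$.

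\textbf{Step 2 (Continuity method).} Each $A_s=sA+(1-s)I$ is symmetric and satisfies (\ref{ellipticity})--(\ref{smoothness}) with constants depending only on $\mu,\lambda,\tau$. By Theorem \ref{operator-approximation-theorem} the family $s\mapsto \mathcal{K}_{A_s}$ is norm-continuous on $L^2(\partial\Omega)$. At $s=0$ the operator $\pm\tfrac12 I+\mathcal{K}_{A_0}$ reduces to $m$ independent copies of Verchota's classical double-layer operator for $-\Delta$, which is invertible on $L^2(\partial\Omega,\mathbb{R}^m)$ (respectively on $L^2_0$ for the $+$ sign, using that $\partial\Omega$ is connected); the analogous statement holds for $\mathcal{S}_{A_0}:L^2\to W^{1,2}$. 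A standard open--closed argument then propagates invertibility from $s=0$ to $s=1$: openness is perturbation of invertibles, and closedness is the uniform coercivity from Step~1, which bounds $\|(\pm\tfrac12 I+\mathcal{K}_{A_s})^{-1}\|$ by the same good constant at every $s$ for which invertibility already holds.

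\textbf{Step 3 (Assembly and uniqueness).} Given $f\in L^2_0(\partial\Omega,\mathbb{R}^m)$, solve $(\tfrac12 I+\mathcal{K}_A)g=f$ inside $L^2_0$ (preserved by $\mathcal{K}_A$, by Remark \ref{mean-value-remark}); then $u=\mathcal{S}_A(g)$ handles $(N)_2$ with $\|(\nabla u)^*\|_2\le C\|g\|_2\le C\|f\|_2$ by Theorem \ref{layer-potential-maximal-theorem}. Given $f\in L^2$, solve $(-\tfrac12 I+\mathcal{K}^*_{A^*})g=f$ (invertibility of the adjoint follows from that of $-\tfrac12 I+\mathcal{K}_A$, since $A^*=A$); then $u=\mathcal{D}_A(g)$ handles $(D)_2$. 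Given $f\in W^{1,2}$, invert $\mathcal{S}_A$ directly via the coercivity in Step~1 and the continuation of Step~2 to obtain the regularity solution with $\|(\nabla u)^*\|_2\le C\|g\|_2\le C\|f\|_{1,2}$. Uniqueness in each case follows from the Green's representation of Proposition \ref{Cauchy-representation-lemma} and standard energy estimates.

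\textbf{Main obstacle.} The most delicate point is extracting the exterior half of the coercivity in Step~1, since the Rellich property is hypothesized only on bounded Lipschitz domains with connected boundary whereas $\Omega_-$ is unbounded. Truncating against $B(0,R)$ introduces boundary terms on $\partial B(0,R)$, and one must use the homogenization estimate (\ref{global-behavior-estimate-1}) (which is where the periodicity of $A$ enters) together with the symmetry $A^*=A$ in the Rellich identity to show that these terms vanish as $R\to\infty$, with constants depending only on the Lipschitz character of $\Omega$. Once this is achieved the continuity method and the assembly steps proceed in routine fashion.
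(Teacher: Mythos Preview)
Your overall architecture---coercivity from Rellich, then Verchota's continuity method along $A_s$, then assembly via layer potentials---is exactly the paper's, and Steps~2 and~3 are fine. The genuine difficulty is the one you flag yourself, and your proposed resolution does not work.

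The Rellich property is hypothesized only for bounded Lipschitz domains with \emph{connected} boundary. The truncated exterior $\Omega_-\cap B(0,R)$ has two boundary components, $\partial\Omega$ and $\partial B(0,R)$, so the hypothesis does not apply to it. You cannot simply invoke the Rellich property there and then ``kill the $\partial B(0,R)$ terms'' by decay: you never had an inequality to begin with. If instead you mean to derive Rellich \emph{identities} by integration by parts on $\Omega_-\cap B(0,R)$, those identities for variable coefficients produce $\int |\nabla A||\nabla u|^2$ terms, and controlling them is precisely the hard analysis (Sections~6--8) that the abstract Rellich hypothesis is meant to encapsulate; you cannot re-derive them here for free.

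The paper avoids this by localization (Lemma~\ref{local-interior-exterior-lemma} and Remark~\ref{interior-exterior-Rellich-remark}). One covers $\partial\Omega$ by graph patches and, in each patch, applies the Rellich property not to $\Omega_\pm$ but to the small one-sided Lipschitz cylinders $Z(tr)$ of (\ref{definition-of-Z}), which \emph{do} have connected boundary. Integrating over $t\in(1,2)$ converts the extra boundary pieces into a solid integral $\frac{C}{r_0}\int_{N_\pm}|\nabla u|^2$, yielding (\ref{interior-exterior-Rellich}) for both signs. This solid term is then controlled by Green's identity: for $\Omega_+$ via (\ref{Green-identity}), and for $\Omega_-$ via (\ref{exterior-Green-identity}), where the decay $|u|+|X||\nabla u|=O(|X|^{2-d})$ of the single layer potential (a consequence of (\ref{size-estimate})) legitimizes the integration by parts in the unbounded region. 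Combining these with the Cauchy inequality with $\varepsilon$ gives the clean two-sided bounds $\|(\nabla u)_\pm\|_2\le C\|(\partial u/\partial\nu)_\pm\|_2$ and $\|(\nabla u)_\pm\|_2\le C\|\nabla_{tan}u\|_2$, and the chaining you wrote in Step~1 then goes through. Note that periodicity and (\ref{global-behavior-estimate-1}) are not needed here; the decay used in (\ref{exterior-Green-identity}) follows already from (\ref{size-estimate}).

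A smaller point: for Dirichlet uniqueness, Proposition~\ref{Cauchy-representation-lemma} requires $u\in C^1(\overline{\Omega})$, which you do not have under the mere assumption $(u)^*\in L^2$. The paper constructs a Green's matrix $G(X,Y)$ using the already-established $(R)_2$ solvability and runs the cutoff argument (\ref{R-10})--(\ref{R-12}) instead.
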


The uniqueness for $(R)_2$ and $(N)_2$ follows readily
from the Green's identity,
\begin{equation}\label{Green-identity}
\int_\Omega a_{ij}^{\alpha\beta} \frac{\partial u^\alpha}{\partial x_i}
\cdot \frac{\partial u^\beta}{\partial x_j}\, dX
=\int_{\partial \Omega}
\left(\frac{\partial u}{\partial\nu}\right)^\alpha
 u^\alpha \, d\sigma,
\end{equation}
by approximating $\Omega$ from inside.
We will use the method of layer potentials
to establish the existence of solutions in Theorems
\ref{local-Rellich-imply-solvability} and \ref{Rellich-imply-solvability}. 

\begin{lemma}\label{local-interior-exterior-lemma} {\rm (Rellich estimates for small scales)}
Let $\Omega$ be a bounded Lipschitz domain with $r_0=\text{diam}(\Omega)\le R$.
Suppose that $\mathcal{L}(u)=0$ in $\Omega_\pm$, $(\nabla u)^*\in L^2(\partial\Omega)$
and $(\nabla u)_\pm $ exists n.t. on $\partial\Omega$.
Under the same conditions on $A$ as in Theorem \ref{local-Rellich-imply-solvability},
we have
\begin{equation}\label{interior-exterior-Rellich}
\aligned
& \int_{\partial \Omega}
 |(\nabla u)_\pm|^2\, d\sigma
 \le C \int_{\partial\Omega}
|\left(\frac{\partial u}{\partial \nu}\right)_\pm|^2\, d\sigma
+\frac{C}{r_0}
\int_{N_\pm} |\nabla u|^2\, dX,\\
& \int_{\partial \Omega}
|(\nabla u)_\pm|^2\, d\sigma
\le  C \int_{\partial\Omega}
|(\nabla_{tan} u)_\pm|^2\, d\sigma
+\frac{C}{r_0}
\int_{N_\pm} |\nabla u|^2\, dX,
\endaligned
\end{equation}
where $N_\pm =\{ X\in \Omega_\pm: \text{dist}(X, \partial\Omega)\le r_0\}$, and
$C$ depends only on $d$, $m$, $\mu$, $\lambda$, $\tau$,
the Lipschitz character of $\Omega$, and $R$ (if $r_0> 1$).
\end{lemma}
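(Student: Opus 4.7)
The proof localizes around the boundary and reduces to the hypothesis of Theorem~\ref{local-Rellich-imply-solvability}, which supplies a Rellich estimate (without any volume term) on every Lipschitz sub-domain of diameter $\le 1/4$ whose Lipschitz character is controlled. The idea is to apply this hypothesis to suitable small cylindrical sub-domains $V_i^\pm$ of $\Omega_\pm$ covering a neighborhood of $\partial\Omega$, and to control the contributions coming from the ``artificial'' part of $\partial V_i^\pm$ that lies inside $\Omega_\pm$ using the interior gradient estimate of Lemma~\ref{gradient-estimate-lemma}.

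Concretely, I would fix a radius $r_1\in (0,1/100]$ depending only on the Lipschitz character of $\Omega$ and chosen so that $\partial\Omega\cap B(P,4r_1)$ is, for each $P\in\partial\Omega$, the graph of a Lipschitz function $\psi_P$ with $\|\nabla\psi_P\|_\infty\le M$ in some rotated frame (if $r_0<r_1$, we rescale and replace $r_1$ by $r_0/10$, which is why $C$ is allowed to depend on $R$ when $r_0\ge1$). Cover $\partial\Omega$ with balls $\{B(P_i,r_1)\}_{i=1}^N$ of bounded overlap, and in the local graph coordinates around $P_i$ define
\begin{equation*}
V_i^\pm=\bigl\{(x',x_d): |x'|<2r_1,\ 0<\pm(x_d-\psi_i(x'))<2(M+1)r_1\bigr\}.
\end{equation*}
Each $V_i^+\subset\Omega$ and $V_i^-\subset\Omega_-$ is a bounded Lipschitz domain of diameter $\le 1/4$ with connected boundary and with Lipschitz character controlled by $M$; its boundary splits as $\partial V_i^\pm=\Delta_i\cup\Sigma_i$, where $\Delta_i\subset\partial\Omega$ contains $B(P_i,r_1/2)\cap\partial\Omega$, and the ``artificial lid'' $\Sigma_i$ lies at distance $\gtrsim r_1$ from $\partial\Omega$.

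On each $V_i^\pm$, the function $u$ is a solution of $\mathcal{L}(u)=0$ with $(\nabla u)^*\in L^2(\partial V_i^\pm)$ and $\nabla u$ existing n.t.\ on $\partial V_i^\pm$; the behavior on $\Delta_i$ is inherited from the lemma's hypothesis, while the behavior on $\Sigma_i$ follows from interior regularity. Applying the hypothesis of Theorem~\ref{local-Rellich-imply-solvability} with $s=1$ yields
\begin{equation*}
\int_{\partial V_i^\pm}|\nabla u|^2\,d\sigma \le C\int_{\partial V_i^\pm}\bigl|\tfrac{\partial u}{\partial\nu}\bigr|^2\,d\sigma \quad\text{and}\quad \int_{\partial V_i^\pm}|\nabla u|^2\,d\sigma \le C\int_{\partial V_i^\pm}|\nabla_{tan} u|^2\,d\sigma.
\end{equation*}
On $\Delta_i$ the outward unit normal of $V_i^\pm$ coincides with the outward unit normal of $\Omega_\pm$, so both the conormal and the tangential gradient relative to $V_i^\pm$ restrict to the corresponding boundary quantities on $\partial\Omega$. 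The contribution on $\Sigma_i$ is controlled via Lemma~\ref{gradient-estimate-lemma}: covering $\Sigma_i$ by interior balls of radius $\sim r_1$ and using $\sup_B|\nabla u|^2\le C r_1^{-d-2}\int_{2B}|u-c|^2$ (followed by a Poincar\'e or Caccioppoli reduction) gives
\begin{equation*}
\int_{\Sigma_i}|\nabla u|^2\,d\sigma \le \frac{C}{r_1}\int_{\widetilde V_i^\pm}|\nabla u|^2\,dX,
\end{equation*}
where $\widetilde V_i^\pm$ is a modest enlargement of $V_i^\pm$ contained in $N_\pm$.

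Summing over $i$ using bounded overlap assembles the desired global inequalities, with $1/r_1\le C(M,R)/r_0$ accounting for the stated dependencies. The tangential version is obtained identically. The main obstacle is a careful construction of the cylinders $V_i^\pm$ so that (i) their Lipschitz character is uniform in $r_0$, (ii) $\Delta_i\cup\Sigma_i$ is connected, and (iii) the n.t.\ convergence of $\nabla u$ on the artificial face $\Sigma_i$ (where $u$ is merely an interior solution) is rigorously justified; all three are routine graph-coordinate book-keeping, but the third needs the pointwise continuity of $\nabla u$ away from $\partial\Omega$ provided by Lemma~\ref{gradient-estimate-lemma}.
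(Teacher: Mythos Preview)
Your localization strategy is correct and matches the paper's overall structure: cover $\partial\Omega$ by small graph cylinders of diameter at most $1/4$, apply the Rellich hypothesis of Theorem~\ref{local-Rellich-imply-solvability} on each, and sum with bounded overlap. The one substantive difference is how the artificial piece $\Sigma_i$ of the boundary is absorbed. You fix a single cylinder $V_i^\pm$ and control $\int_{\Sigma_i}|\nabla u|^2\,d\sigma$ pointwise via the interior gradient bound of Lemma~\ref{gradient-estimate-lemma} (plus Poincar\'e), which is legitimate here since $A\in\Lambda(\mu,\lambda,\tau)$. The paper instead applies the Rellich property on the one-parameter family $Z(tr)$ for every $t\in(1,2)$ and then integrates in $t$; by the co-area formula the term $\int_{\partial Z(tr)\setminus\Delta(tr)}|\nabla u|^2\,d\sigma$ averages directly to $\frac{C}{r}\int_{Z(2r)}|\nabla u|^2\,dX$, with no appeal to interior regularity at all. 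The paper's averaging device is thus a bit more elementary and would survive even without the uniform $C^{0,1}$ theory, whereas your route uses that theory but avoids the family of domains. Both yield the lemma with the stated dependence of $C$, and your handling of the scale dichotomy ($r_1\sim r_0$ when $r_0\le 1$, $r_1$ fixed when $r_0>1$, hence $1/r_1\le C(R)/r_0$) is exactly the paper's.
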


\begin{proof}
Let $\psi: \mathbb{R}^{d-1}\to \mathbb{R}$ be a Lipschitz function such that
$\psi(0)=0$ and $\|\nabla \psi\|_\infty\le M$.
Let
\begin{equation}\label{definition-of-Z}
\aligned
& Z(r)=\big\{ (x^\prime, x_d)\in \mathbb{R}^d:
\ |x^\prime|<r \text{ and } \psi(x^\prime)<x_d <10 \sqrt{d} (M+1) r\big\},\\
& \Delta(r) =\big\{ (x^\prime, \psi(x^\prime))\in \mathbb{R}^d: \ |x^\prime|<r\big\}.
\endaligned
\end{equation}
Suppose that $\mathcal{L}(u)=0$ in $\Omega_0=
Z(3r)$, $(\nabla u)^*\in L^2(\partial\Omega_0)$ and $\nabla u$
exists n.t. on $\partial\Omega_0$.
Assume that $\text{diam}(Z(2r))<(1/4)$. Then for any $t\in (1,2)$,
$\mathcal{L}$ has the Rellich property in the Lipschitz domain $Z(tr)$
with constant $C_0= C(Z(tr))$ depending only on $M$.
It follows that
\begin{equation}\label{R-1}
\aligned
\int_{\Delta(r)} |\nabla u|^2\, d\sigma
& \le \int_{\partial Z(tr)} |\nabla u|^2\, d\sigma\\
&\le  C_0 \int_{\Delta (2r)} |\frac{\partial u}{\partial\nu}|^2\, d\sigma
+C C_0\int_{\partial Z(tr) \setminus \Delta (tr)} |\nabla u|^2\, d\sigma.
\endaligned
\end{equation}
We now integrate both sides of (\ref{R-1}) with respect to $t$ over the interval $(1,2)$
to obtain
\begin{equation}\label{R-2}
\int_{\Delta(r)}
|\nabla u|^2\, d\sigma 
\le C \int_{\Delta(2r)}
|\frac{\partial u}{\partial\nu}|^2\, d\sigma
+\frac{C}{r}\int_{Z(2r)}
|\nabla u|^2\, dX.
\end{equation}

Finally we choose $r=c(M)r_0$ if $r_0\le 1$, and $r=c(M)$ if
$ r_0>1$. The first inequality in (\ref{interior-exterior-Rellich}) follows
from (\ref{R-2}) by covering $\partial\Omega$ with $\{\Delta_i\}$, each of which 
may be obtained
from $\Delta(r)$ by translation and rotation.
The proof for the second inequality in (\ref{interior-exterior-Rellich})
is similar.
\end{proof}

\begin{remark}\label{interior-exterior-Rellich-remark}
{\rm
Under the same conditions on $A$ as in Theorem \ref{Rellich-imply-solvability},
the estimates in (\ref{interior-exterior-Rellich})
hold with constant $C$ independent of $R$.
This is because we may choose $r=c(M)r_0$ for any $\Omega$.
}
\end{remark}

\begin{lemma}\label{invertibility-of-Neumann-operator}
Let $R\ge 1$ and
$\Omega$ be a bounded Lipschitz domain with diam$(\Omega)\le R$.
Under the same conditions on $A$ as in Theorem \ref{local-Rellich-imply-solvability},
the operators $(1/2)I +\mathcal{K}_{A}: L^2_0(\partial\Omega, \mathbb{R}^m)
\to L^2_0(\partial\Omega, \mathbb{R}^m)$ and
$-(1/2)I +\mathcal{K}_A: L^2(\partial\Omega, \mathbb{R}^m)\to
L^2(\partial\Omega, \mathbb{R}^m)$ are invertible and
\begin{equation}\label{inverse-bound}
\aligned
& \| \big((1/2)I +\mathcal{K}_A\big)^{-1}\|_{L^2_0\to L^2_0} \le C,\\
 &\| \big(-(1/2)I +\mathcal{K}_A\big)^{-1}\|_{L^2\to L^2} \le C,\\
\endaligned
\end{equation}
where $C$ depends only on $\mu$, $\lambda$, $\tau$,
the Lipschitz character of $\Omega$, and $R$ (if $\text{diam}(\Omega)\ge 1$).
\end{lemma}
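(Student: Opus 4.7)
The plan is to combine the small-scale Rellich estimates of Lemma~\ref{local-interior-exterior-lemma} with a homotopy argument in the parameter $s \in [0,1]$, deforming $A_s = sA + (1-s)I$ from the constant-coefficient Laplace system at $s=0$ (where invertibility is classical) up to $A_1 = A$. Throughout I will write $T_s^\pm = \pm \tfrac12 I + \mathcal{K}_{A_s}$. The first step is to extract a Fredholm lower bound: for $f \in L^2(\partial\Omega,\mathbb{R}^m)$ let $u = \mathcal{S}_{A_s}(f)$; by Theorems~\ref{trace-theorem} and~\ref{double-layer-trace-theorem} we have $(\partial u/\partial\nu_{A_s})_\pm = T_s^\pm f$, and by (\ref{tangential-relation}) the tangential traces agree, $(\nabla_{tan} u)_+ = (\nabla_{tan} u)_-$. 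Applying both inequalities of Lemma~\ref{local-interior-exterior-lemma} in $\Omega_+ = \Omega$ and in $\Omega_- = \mathbb{R}^d \setminus \overline{\Omega}$, and chaining through the continuous tangential trace, gives the cross-comparison
\[
\|(\partial u/\partial\nu)_\pm\|_2 \le C\,\|(\partial u/\partial\nu)_\mp\|_2 + E, \qquad E := Cr_0^{-1/2}\bigl(\|\nabla u\|_{L^2(N_+)} + \|\nabla u\|_{L^2(N_-)}\bigr).
\]
Together with the jump relation (\ref{jump-relation}) this yields $\|f\|_2 \le C\|T_s^\pm f\|_2 + E$, with the same $C$ and $E$ for either sign.

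The next step is to package $E$ as a genuinely compact perturbation on $L^2(\partial\Omega,\mathbb{R}^m)$. The map $f \mapsto \nabla \mathcal{S}_{A_s}(f)|_{N_\pm}$ is bounded from $L^2(\partial\Omega)$ into $L^2(N_\pm)$, and I would argue it is in fact compact: on any slice of $N_\pm$ at positive distance from $\partial\Omega$, the interior H\"older estimate (\ref{local-Holder-gradient-estimate}) upgrades $L^2$-boundedness to uniform $C^{0,\lambda}$-bounds, so Arzel\`a--Ascoli gives compactness on that slice; a slicing-and-limit argument, dominated by the $L^2$ nontangential maximal bound from Theorem~\ref{layer-potential-maximal-theorem}, extends compactness up to $\partial\Omega$. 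The result is a Fredholm inequality $\|f\|_2 \le C\|T_s^\pm f\|_2 + \|K_s f\|_2$ with $K_s$ compact on $L^2(\partial\Omega,\mathbb{R}^m)$, so $T_s^\pm$ has closed range and finite-dimensional kernel. The same argument applied to the adjoints (using $A^* = A$ and Theorem~\ref{double-layer-trace-theorem}) promotes this to the full Fredholm property.

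With Fredholm in hand I would run the continuity method. Theorem~\ref{operator-approximation-theorem} shows that $s \mapsto \mathcal{K}_{A_s}$ is continuous in the $L^2$ operator norm, so the index of $T_s^\pm$ is locally constant, hence constant on $[0,1]$; at $s=0$ the operators $T_0^\pm$ are the classical Neumann layer operators for constant-coefficient elliptic systems, which are invertible on $L^2$ and $L^2_0$ respectively by \cite{Verchota-1984, dahlberg3, fabes2, Gao-1991}, so the index is $0$ throughout $[0,1]$. It remains to check that the kernels are trivial at $s=1$. If $T_1^+ f = 0$ with $f \in L^2_0$, then $u = \mathcal{S}_A(f)$ solves $\mathcal{L}(u)=0$ in $\Omega$ with $(\partial u/\partial\nu)_+ = 0$; Green's identity (\ref{Green-identity}) together with the ellipticity (\ref{ellipticity}) forces $u$ to be constant on $\Omega$ (since $\partial\Omega$ connected implies $\Omega$ connected), so $u_+$ is constant; continuity of the single layer across $\partial\Omega$ makes $u_-$ the same constant, which must vanish by the decay $u(X) = O(|X|^{2-d})$ from (\ref{size-estimate}) together with $d \ge 3$. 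Uniqueness for the exterior Dirichlet problem with decay then gives $u \equiv 0$ in $\Omega_-$, whence $f = (\partial u/\partial\nu)_+ - (\partial u/\partial\nu)_- = 0$ by (\ref{jump-relation}). The case $T_1^- f = 0$ is symmetric, using Green's identity on $\Omega_-$ with a cutoff at infinity. Combining trivial kernels with index zero yields invertibility, and (\ref{inverse-bound}) follows from the open mapping theorem, the $R$-dependence when $r_0 \ge 1$ being inherited from the constant in Lemma~\ref{local-interior-exterior-lemma}.

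The main obstacle I anticipate is the second step: cleanly identifying $E$ with a compact operator on $L^2(\partial\Omega)$. In the classical constant-coefficient theory the Rellich identity is an \emph{equality} and no such interior term appears, so this is a genuinely new difficulty created by the variable coefficients. The slicing/limit compactness argument is standard in spirit but must be implemented with constants depending only on the Lipschitz character of $\Omega$ (and on $R$ when $\mathrm{diam}(\Omega) \ge 1$), uniformly in $s \in [0,1]$, which is the delicate point needed for both the quantitative bound (\ref{inverse-bound}) and the applicability of the continuity method.
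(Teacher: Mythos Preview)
Your approach diverges from the paper's at the crucial second step, and the divergence is exactly where your own concern lies: you never need compactness at all. The paper absorbs the interior error term $E$ \emph{directly and quantitatively}. For $f\in L^2_0(\partial\Omega,\mathbb{R}^m)$ and $u=\mathcal{S}_A(f)$, both conormal traces $(\partial u/\partial\nu)_\pm$ have mean zero on $\partial\Omega$ (the interior one always does; the exterior one does by the jump relation since $\int f=0$). Hence Poincar\'e on $\partial\Omega$ combined with Green's identity (\ref{Green-identity}) and its exterior analogue yields
\[
\int_{\Omega_\pm}|\nabla u|^2\,dX \;\le\; Cr_0\,\Big\|\Big(\tfrac{\partial u}{\partial\nu}\Big)_\pm\Big\|_2\,\|\nabla_{\tan}u\|_2.
\]
Inserting this into Lemma~\ref{local-interior-exterior-lemma} and applying Cauchy with $\varepsilon$ (using $\|\nabla_{\tan}u\|_2\le\|(\nabla u)_\pm\|_2$) gives the \emph{clean} two-sided inequalities $\|(\nabla u)_\pm\|_2\le C\|(\partial u/\partial\nu)_\pm\|_2$ and $\|(\nabla u)_\pm\|_2\le C\|\nabla_{\tan}u\|_2$ with a good constant $C$. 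Chaining these through the common tangential trace and the jump relation immediately gives $\|f\|_2\le C\|(\pm\tfrac12 I+\mathcal{K}_{A})f\|_2$ for $f\in L^2_0$, and a short mean-value trick extends the minus-sign estimate to all of $L^2$. These are genuine \emph{a priori} lower bounds, not Fredholm inequalities, and they hold uniformly in $s\in[0,1]$; so the continuity argument (which you set up correctly, via Theorem~\ref{operator-approximation-theorem} and \cite{Verchota-1984} at $s=0$) upgrades injectivity to invertibility with the quantitative bound (\ref{inverse-bound}) for free.

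By contrast, your route through a compact remainder $K_s$ would at best yield index-zero Fredholm operators with trivial kernel; the open mapping theorem then gives \emph{some} bound on the inverse, but there is no mechanism to make that bound depend only on $\mu,\lambda,\tau$, the Lipschitz character, and $R$. That is precisely the content of (\ref{inverse-bound}) and is essential downstream (the whole point of the paper is uniformity in $\varepsilon$, obtained by rescaling). So the obstacle you flagged is not a technicality to be cleaned up but a genuine gap in the strategy; the fix is to replace the compactness step by the Green--Poincar\'e absorption above.
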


\begin{proof}
Let $f\in L^2_0(\partial\Omega,\mathbb{R}^m)$ and $u=\mathcal{S}_A (f)$.
Then $\mathcal{L} (u)=0$ in $\mathbb{R}^d\setminus \partial\Omega$,
$(\nabla u)^*\in L^2(\partial\Omega)$ and
$(\nabla u)_\pm $ exists n.t. on $\partial\Omega$.
Also recall that $(\nabla_{tan} u)_+=(\nabla_{tan} u)_-$ on $\partial\Omega$.
Since $|u(X)| + |X||\nabla u(X)|
=O(|X|^{2-d})$ as $|X|\to\infty$, it follows from integration
by parts that
\begin{equation}\label{exterior-Green-identity}
\int_{\Omega_-} a_{ij}^{\alpha\beta} \frac{\partial u^\alpha}{\partial x_i}
\cdot \frac{\partial u^\beta}{\partial x_j}\, dX
=-\int_{\partial \Omega}
\left(\frac{\partial u}{\partial\nu}\right)_-^\alpha
 u^\alpha_- \, d\sigma.
\end{equation}
By the jump relation (\ref{jump-relation}), 
$\int_{\partial\Omega} \left(\frac{\partial u}
{\partial\nu}\right)_-d\sigma=-\int_{\partial\Omega} fd\sigma =0$.
Using Poincar\'e's inequality on $\partial\Omega$,
(\ref{exterior-Green-identity}), together with (\ref{Green-identity}), gives
\begin{equation}\label{Green-estimate}
\int_{\Omega_\pm}
|\nabla u|^2\, dX
\le Cr_0 \|\left(\frac{\partial u}{\partial \nu}\right)_\pm\|_2 \|\nabla_{tan} u\|_2.
\end{equation}
By combining (\ref{interior-exterior-Rellich}) with (\ref{Green-estimate}) and
then using the Cauchy inequality with an $\varep>0$,
we see that
$\| (\nabla u)_\pm \|_2 \le C\|\left(\frac{\partial u}{\partial \nu}\right)_\pm\|_2$
and $\|(\nabla u)_\pm \|_2 \le C\|\nabla_{tan} u\|_2$.
It follows that
\begin{equation}\label{R-3}
\|\left(\frac{\partial u}{\partial\nu}\right)_\pm \|_2
\le C\| \nabla_{tan} u\|_2
\le C \|(\nabla u)_\mp\|_2\\
 \le C
\|\left(\frac{\partial u}{\partial\nu}\right)_\mp \|_2.
\end{equation}
Consequently, by the jump relation, for any $f\in L^2_0(\partial\Omega, \mathbb{R}^m)$,
\begin{equation}\label{R-4}
\aligned
\| f\|_2  &\le  \|\left(\frac{\partial u}{\partial\nu}\right)_+ \|_2
+
\|\left(\frac{\partial u}{\partial\nu}\right)_- \|_2
\le C \|\left(\frac{\partial u}{\partial\nu}\right)_\pm \|_2\\
& =C \| \big(\pm (1/2) I +\mathcal{K}_A \big) f\|_2.
\endaligned
\end{equation}
  Furthermore, if $f\in L^2(\partial\Omega, \mathbb{R}^m)$ and $g=f-f_{\partial\Omega}$,
then
\begin{equation}\label{R-6}
\aligned
\| f\|_2 & \le C \| (-(1/2)I +\mathcal{K}_A ) g\|_2 +\| f_{\partial\Omega}\|_2\\
& \le C  \| (-(1/2)I +\mathcal{K}_A ) f\|_2 + C \| f_{\partial\Omega}\|_2\\
& \le C  \| (-(1/2)I +\mathcal{K}_A ) f\|_2.
\endaligned
\end{equation}
We remark that the last inequality in (\ref{R-6})
follows from the observation that
$f_{\partial\Omega}$ is also the mean value of $-(\frac{\partial u}{\partial\nu})_-$
on $\partial\Omega$. Since $\left(\frac{\partial u}{\partial \nu}\right)_+$
has mean value zero, this is
a simple consequence of the jump relation (\ref{jump-relation}).

Thus, to complete the proof, we only need to show that the operators
$(1/2)I +\mathcal{K}_A: L^2_0(\partial\Omega, \mathbb{R}^m)
\to L^2_0(\partial\Omega, \mathbb{R}^m)$
and $-(1/2)I +\mathcal{K}_A: L^2(\partial\Omega, \mathbb{R}^m)
\to L^2(\partial\Omega, \mathbb{R}^m)$ are onto.
To this end, we consider a family of matrices $ A^s = sA +(1-s) I$, where $0\le s\le 1$.
Note that by \cite{Verchota-1984}, $\pm (1/2) I +\mathcal{K}_{A^0}$
are invertible on $L_0^2(\partial\Omega, \mathbb{R}^m)$ and
$L^2(\partial\Omega, \mathbb{R}^m)$ respectively.
Also observe that for each $s\in [0,1]$, the matrix $A^s$ satisfies the same 
conditions as  $A$. Hence, 
\begin{equation}\label{R-7}
\aligned
& 
\| f\|_2 \le C \| ((1/2)I +\mathcal{K}_{A^s}) f\|_2 \qquad\qquad \text{ for any } f
\in L^2_0(\partial\Omega, \mathbb{R}^m),\\
& \| f\|_2 \le C \| (-(1/2)I +\mathcal{K}_{A^s}) f\|_2 \qquad \quad
\text{ for any } f
\in L^2(\partial\Omega, \mathbb{R}^m),
\endaligned
\end{equation}
where $C$ is independent of $s$.
Since $\| A^{s_1} -A^{s_2}\|_{C^{\lambda}(\mathbb{R}^d)}
\le |s_1-s_2|\| A\|_{C^{\lambda}(\mathbb{R}^d)}$,
it follows from Theorem \ref{operator-approximation-theorem} that
$\{ (1/2)I +\mathcal{K}_{A^s}: 0\le s\le 1\}$ and
$\{ -(1/2)I +\mathcal{K}_{A^s}: 0\le s\le 1\}$
are continuous families of bounded operators 
on $L_0^2(\partial\Omega,\mathbb{R}^m)$ and  
 $L^2(\partial\Omega,\mathbb{R}^m)$ respectively.
This, together with the estimates in  (\ref{R-7}) and the invertibility results
for $s=0$, gives the desired invertibility for $s=1$.
The operator norm estimates in (\ref{inverse-bound})
follow directly from (\ref{R-4}) and (\ref{R-6}). 
\end{proof}

\begin{remark}\label{single-layer-invertibility-remark-1}
{\rm
Under the same assumptions on $A$ and $\Omega$ as in Lemma \ref{invertibility-of-Neumann-operator},
the operator $\mathcal{S}_A: L^2(\partial\Omega, \mathbb{R}^m)
\to W^{1,2}(\partial\Omega, \mathbb{R}^m)$ is invertible
and $\| \big( \mathcal{S}_A\big)^{-1}\|_{W^{1,2}\to L^2} \le C$.
To see this, we let 
$f\in L^2(\partial\Omega, \mathbb{R}^m)$ 
and $u=\mathcal{S}(f)$. It follows from the proof of
Lemma \ref{invertibility-of-Neumann-operator} that
\begin{equation}\label{R-8}
\| (\nabla u)_-\|_2 \le C\|\nabla_{tan} u\|_2 +Cr_0^{-1}\| u\|_2.
\end{equation}
This, together with $\|(\nabla u)_+\|_2 \le C\|\nabla_{tan} u\|_2$ and
the jump relation, gives
\begin{equation}\label{R-9}
\| f\|_2 \le C \|\nabla_{tan} \mathcal{S}(f)\|_2 +C r_0^{-1} \| \mathcal{S}(f)\|_2
\le C \| \mathcal{S}(f)\|_{1,2}.
\end{equation}
Estimate (\ref{R-9}) implies that $\mathcal{S}: L^2(\partial\Omega, \mathbb{R}^m)
\to W^{1,2}(\partial\Omega, \mathbb{R}^m)$ is one-to-one.
A continuity argument similar to that in the proof of Lemma \ref{invertibility-of-Neumann-operator}
shows that the operator is in fact invertible.
}
\end{remark}

\begin{remark}\label{operator-invertiblity-remark}
{\rm
Under the same conditions on $A$ as in Theorem \ref{Rellich-imply-solvability},
the estimates in (\ref{inverse-bound}) and (\ref{R-9}) hold with a ``good'' constant $C$.
}
\end{remark}

We are now in a position to give the proof of Theorems \ref{local-Rellich-imply-solvability}
and \ref{Rellich-imply-solvability}.

\noindent{\bf Proof of Theorems \ref{local-Rellich-imply-solvability} and 
\ref{Rellich-imply-solvability}.}

As we mentioned earlier, the uniqueness for the $L^2$ Neumann and regularity problems
follows from the Green's identity (\ref{Green-identity}) by approximating $\Omega$
from inside.
The existence for the $L^2$ Neumann and regularity problems is a direct consequence
of the invertibility of $(1/2)I +\mathcal{K}_A$ on $L^2_0(\partial\Omega, \mathbb{R}^m)$
and that of $\mathcal{S}_A: L^2(\partial\Omega,\mathbb{R}^m)\to
W^{1,2}(\partial\Omega, \mathbb{R}^m)$ respectively.
Since $-(1/2)I +\mathcal{K}_A$ is invertible on $L^2(\partial\Omega, \mathbb{R}^m)$,
it follows by duality that $-(1/2)I +\mathcal{K}_A^*$ is also invertible
on $L^2(\partial\Omega, \mathbb{R}^m)$ and $\|(-(1/2)I +\mathcal{K}^*_A)^{-1}\|_{L^2\to L^2}
=\|(-(1/2)I +\mathcal{K}_A)^{-1}\|_{L^2\to L^2}$.
This gives the existence for the $L^2$ Dirichlet problem in $\Omega$.
Note that under the conditions in Theorem \ref{Rellich-imply-solvability},
the operator norms of $(\pm (1/2)I +\mathcal{K}_A)^{-1}$ and $(\mathcal{S}_A)^{-1}$
are bounded by a ``good'' constant $C$.
It follows that estimates in (\ref{Neumann-regularity-estimate})
and $\|(u)^*\|_2 \le C \| u\|_2$
hold with a ``good'' constant $C$.

To establish the uniqueness, we construct a matrix of Green's functions 
$(G^{\alpha\beta} (X,Y))$ for $\Omega$, where
\begin{equation}\label{Green's-function}
G^{\alpha\beta} (X,Y)=\Gamma^{\alpha\beta}(X,Y)-W^{\alpha\beta} (X,Y)
\end{equation}
and for each $\beta$ and $Y\in \Omega$, $W^\beta (\cdot,Y)=
(W^{1\beta}(\cdot,Y), \dots, W^{m \beta}(\cdot,Y))$ is the solution to the 
$L^2$ regularity problem for $\mathcal{L}(u)=0$ in $\Omega$
with boundary data 
$$
\Gamma^\beta (\cdot, Y)=
(\Gamma^{1\beta}(\cdot, Y), \dots, \Gamma^{m\beta} (\cdot, Y))
\qquad \text{ on } \partial\Omega.
$$

Suppose now that $\mathcal{L}(u)=0$ in $\Omega$, $(u)^*\in L^2(\partial\Omega)$
and $u=0$ n.t. on $\partial\Omega$.
For $\rho>0$ small, choose $\varphi=\varphi_\rho$ so that
$\varphi =1 $ in $\{ X\in \Omega: \text{ dist}(X,\partial\Omega)\ge 2\rho\}$,
$\varphi=0$ in $\{ X\in \Omega: \text{ dist}(X, \partial\Omega) \le \rho\}$
and $|\nabla \varphi|\le C\rho^{-1}$.
Fix $Y\in \Omega$ so that dist$(Y, \partial\Omega)\ge 2\rho$.
It follows from (\ref{fundamental-solution-representation}) that
\begin{equation}\label{R-10}
\aligned
u^\gamma (Y)  &=u^\gamma (Y)\varphi (Y)
=\int_\Omega
a_{ij}^{\alpha\beta} (X)
\frac{\partial}{\partial x_j}
\big\{ G^{\beta\gamma} (X,Y)\big\}
\frac{\partial}{\partial x_i} (u^{\alpha} \varphi)\, dX\\
&
=-\int_\Omega a_{ij}^{\alpha\beta} (X) G^{\beta\gamma} (X,Y) \frac{\partial u^\alpha}
{\partial x_i} \cdot \frac{\partial \varphi}{\partial x_j}\, dX\\
&\qquad\qquad
+\int_\Omega a_{ij}^{\alpha\beta} 
\frac{\partial}{\partial x_j} \big\{ G^{\beta\gamma}(X,Y)\big\}
 u^\alpha \frac{\partial\varphi}
{\partial x_i}\, dX,
\endaligned
\end{equation}
where we have used the integration by parts and $A^*=A$.
This gives
\begin{equation}\label{R-11}
|u(Y)|
 \le \frac{C}{\rho} \int_{F_\rho} |G(X,Y)||\nabla u|\, dX
+\frac{C}{\rho}
\int_{F_\rho} |\nabla_X G(X,Y)||u|\, dX,
\end{equation}
where $F_\rho=\{ X\in \Omega: \rho \le \text{dist}(X, \partial\Omega)\le 2\rho\}$.
Using $G(\cdot, Y)=u=0$ n.t. on $\partial\Omega$ as well as 
the gradient estimate (\ref{gradient-estimate}) on $u$,
we may deduce from (\ref{R-11}) that
\begin{equation}\label{R-12}
|u(Y)|\le C\int_{\partial\Omega} (\nabla G(\cdot, Y))^*_{3\rho}
(u)^*_{3\rho} \, d\sigma,
\end{equation}
where $(u)^*_{3\rho} (P) =\sup\{ |\nabla u(X)|: X\in \gamma (P) 
\text{ and } \text{dist}(X, \partial\Omega)<3\rho\}$.
As
$(\nabla G(\cdot, Y))^*_{3\rho}
(u)^*_{3\rho}\in L^1 (\partial\Omega)$,
 we may conclude
from (\ref{R-12}) by the Lebesgue dominated convergence theorem that
$u(Y)=0$.
This completes the proof.
\qed

\section{Solvability for small scales, Part I}

The main purpose of this and next sections is to establish the following theorem.

\begin{thm}\label{local-solvability}
Let $A=(a_{ij}^{\alpha\beta})$ be a real matrix satisfying the symmetry
condition (\ref{symmetry}), the ellipticity condition (\ref{ellipticity})
and the smoothness condition (\ref{smoothness}).
Let $R\ge 1$.
Then for any bounded Lipschitz domain $\Omega$ with connected boundary and
diam$(\Omega)\le R$, the $L^2$ Neumann and regularity problems for
$\text{div}(A\nabla u)=0$ in $\Omega$ are uniquely solvable and
the solutions satisfy the estimates in (\ref{Neumann-regularity-estimate})
with constant $C$ depending only on $\mu$, $\lambda$, $\tau$,
the Lipschitz character of $\Omega$, and $R$ (if $\text{diam}(\Omega)>1$).
Furthermore, the $L^2$ Dirichlet problem for
$\text{div}(A\nabla u)=0$ in $\Omega$ is uniquely solvable
with estimate $\|(u)^*\|_2 \le C\| u\|_2$.
\end{thm}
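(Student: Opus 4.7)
The plan is to invoke Theorem \ref{local-Rellich-imply-solvability} and thereby reduce the entire statement to verifying the Rellich property, with uniform constants, for the one-parameter family $\mathcal{L}_s = -\text{div}(A^s\nabla)$ with $A^s = sA + (1-s)I$. Since for each $s \in [0,1]$ the matrix $A^s$ inherits the ellipticity, the symmetry, and the H\"older smoothness of $A$ with bounds controlled by $\mu,\lambda,\tau$ uniformly in $s$, it suffices to prove the following generic claim: whenever $B \in \Lambda(\mu,\lambda,\tau)$ is symmetric, $\Omega_0$ is a bounded Lipschitz domain with $\text{diam}(\Omega_0) \le 1/4$ and connected boundary, and $u$ solves $-\text{div}(B\nabla u) = 0$ in $\Omega_0$ with $(\nabla u)^* \in L^2(\partial\Omega_0)$ and $\nabla u$ existing nontangentially, then $\|\nabla u\|_2 \le C\|\partial u/\partial\nu_B\|_2$ and $\|\nabla u\|_2 \le C\|\nabla_{tan} u\|_2$ with $C$ a good constant. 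Theorem \ref{local-Rellich-imply-solvability} then delivers unique solvability of $(D)_2$, $(R)_2$, $(N)_2$ on any Lipschitz $\Omega$ with $\text{diam}(\Omega)\le R$, together with the claimed $L^2$ estimates, including the $R$-dependence permitted when $\text{diam}(\Omega)>1$.

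By a standard partition of unity subordinate to a finite covering of $\partial\Omega_0$ by boundary cylinders, together with a rotation of coordinates in each patch, these two Rellich estimates reduce to the local inequalities (\ref{Rellich-estimate-1.1}) for solutions of $-\text{div}(B\nabla u) = 0$ in $D(2r)$ at every scale $0 < r \le r_0(M) \le 1/4$. The interior term $(C/r)\int_{D(2r)} |\nabla u|^2\, dX$ that appears on the right-hand side of (\ref{Rellich-estimate-1.1}) is then absorbed exactly as in Lemma \ref{local-interior-exterior-lemma}: one integrates the local inequality against a suitable weight in the scale parameter, combines with the Green's identity (\ref{Green-identity}) on $\Omega_0$, and closes via a Cauchy inequality with small $\varep$.

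The heart of the argument is therefore (\ref{Rellich-estimate-1.1}) itself. My plan is to apply the Rellich--Ne\v{c}as--Payne--Weinberger identity to a smooth vector field $h$ extending $e_d$ from $D(r)$ to $D(2r)$. Using the symmetry $B^* = B$, integration by parts on $\int_{D(2r)} h_k \partial_k \bigl(B_{ij}^{\alpha\beta} \partial_i u^\alpha \partial_j u^\beta\bigr)\, dX$ produces the boundary quadratic forms $\int (B\nabla u \cdot \nabla u)(h \cdot n)\, d\sigma$ and $\int (h \cdot \nabla u)(n \cdot B\nabla u)\, d\sigma$, plus an interior error involving the derivatives of $B$. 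For $C^1$ coefficients this error is bounded by $\|\nabla B\|_\infty \int_{D(2r)} |\nabla u|^2$, and one obtains (\ref{Rellich-estimate-1.1}) immediately by combining with the ellipticity of $B$ and the positive lower bound $h \cdot n \gtrsim 1$ on $\Delta(r)$ guaranteed by the Lipschitz geometry. For merely H\"older $B$, I would implement the promised three-step approximation: mollify $B$ at scale $\delta$ to produce $B_\delta$ with $\|B - B_\delta\|_\infty \le C\delta^\lambda$ and $\|\nabla B_\delta\|_\infty \le C\delta^{\lambda-1}$; construct an auxiliary solution $u_\delta$ of $-\text{div}(B_\delta \nabla u_\delta) = 0$ with matching boundary data, comparing $u$ to $u_\delta$ by means of Lemma \ref{fundamental-difference-lemma} and Theorem \ref{operator-approximation-theorem}; apply the Rellich identity to the smooth pair $(u_\delta, B_\delta)$; and optimize $\delta$ at the end.

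The main obstacle, and the reason the introduction describes this as a \emph{delicate} three-step argument, is to run the approximation so that the final constant in (\ref{Rellich-estimate-1.1}) depends only on $d,m,\mu,\lambda,\tau,M$ and is in particular independent of any higher regularity norm of $B$. The critical bookkeeping is to balance the interior Rellich error $\delta^{\lambda-1}\int |\nabla u_\delta|^2$ against the approximation defect $\delta^\lambda$ incurred in passing between $u$ and $u_\delta$ and between their conormal derivatives (with respect to $B$ versus $B_\delta$), and then to absorb the residue either into the admissible $(C/r)\int_{D(2r)} |\nabla u|^2\, dX$ term or into a small fraction of $\int_{\Delta(r)} |\nabla u|^2\, d\sigma$ via a Cauchy inequality with $\varep$. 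Executing this balance, without any hidden dependence on quantities like $\|\nabla B_\delta\|_\infty$ persisting in the final constant, is the content of the remainder of Section~6 and of Section~7.
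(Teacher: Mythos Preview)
Your reduction to Theorem \ref{local-Rellich-imply-solvability} and then to the local Rellich estimates (\ref{Rellich-estimate-1.1}) is exactly right, and your description of the $C^1$ case matches Lemma \ref{lemma-5.1}. Where your outline diverges from the paper is in the meaning of the ``three-step approximation'' for H\"older coefficients: it is \emph{not} a uniform mollification $B \mapsto B_\delta$ followed by a comparison of the solutions $u$ and $u_\delta$.

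The paper's three steps are these. \textbf{Step One} (Lemmas \ref{lemma-5.1}--\ref{local-interior-lemma-1}, Theorem \ref{step-one-theorem}) proves the Rellich property and the invertibility of $\pm(1/2)I+\mathcal{K}$ for any coefficient satisfying the one-sided condition (\ref{additional-assumption-1}): smooth off $\partial\Omega$ with $|\nabla A(X)| \le C_1\,\mathrm{dist}(X,\partial\Omega)^{\lambda_0-1}$. The point is that $\int_\Omega |\nabla A|\,|\nabla u|^2$ then splits into a boundary-layer piece bounded by $(\rho r_0)^{\lambda_0}\|(\nabla u)^*\|_2^2$ and an interior piece $(\rho r_0)^{\lambda_0-1}\int_\Omega|\nabla u|^2$, and both are absorbable. \textbf{Step Two} (Lemmas \ref{construction-A-bar-lemma}--\ref{lemma-6.2}) builds, for the given $A$, a matrix $\bar A$ equal to $A$ on $\partial\Omega$ obtained by \emph{harmonic extension} of $A|_{\partial\Omega}$; this $\bar A$ satisfies (\ref{additional-assumption-1}), so Step One applies to it. One then interpolates $A^\rho = \theta(\delta(X)/\rho)A + (1-\theta(\delta(X)/\rho))\bar A$, so that $A^\rho = A$ in a full neighborhood of $\partial\Omega$ and $\|A^\rho-\bar A\|_{C^{\lambda_0/2}}\le C\rho^{\lambda_0/2}$; Theorem \ref{operator-approximation-theorem} then transfers invertibility from $\bar A$ to $\widetilde A=A^\rho$ for $\rho$ small. \textbf{Step Three} (Lemma \ref{perturbation-lemma}) is a Green's-representation perturbation: because $\widetilde A = A$ near $\partial\Omega$, a solution for $A$ differs from a solution for $\widetilde A$ only by an interior correction $v$ with $|\nabla v|$ controlled by $r_0^{-d}\int_\Omega|\nabla u|^2$, and the Rellich property passes from $\widetilde A$ to $A$.

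The reason the paper takes this route rather than your mollification is precisely the obstacle you flagged: with a uniform mollification $B_\delta$ one has $B_\delta\neq B$ on $\partial\Omega$, so the conormals $\partial u/\partial\nu_B$ and $\partial u_\delta/\partial\nu_{B_\delta}$ differ both through the coefficient and through $\nabla(u-u_\delta)|_{\partial\Omega}$, and controlling the latter boundary trace with $\delta$-independent constants is exactly the circularity one is trying to avoid. By arranging $\widetilde A=A$ in a collar of $\partial\Omega$, the paper eliminates any boundary comparison of solutions; all the approximation error lives in the interior, where it is harmless. Your outline is not wrong as a strategy, but it is a different and harder road, and the balance you describe between $\delta^{\lambda-1}\int|\nabla u_\delta|^2$ and $\delta^\lambda$ does not obviously close without already knowing something like the Rellich estimate you are trying to prove.
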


\begin{remark}\label{periodicity-remark}
{\rm
Note that the periodicity of $A$ is not needed in Theorem \ref{local-solvability}.
This is because we may reduce the general case to the case of the periodic 
coefficients. 
Indeed, by translation, we may assume that $0\in \Omega$.
If diam$(\Omega)\le (1/4)$, we construct $\widetilde{A}\in 
\Lambda (\mu, \lambda, \tau_0)$ so that
$\widetilde{A}=A$ on $[-3/8,3/8]^d$, where $\tau_0$ depends on 
$\mu$ and $\tau$.
The boundary value problems for
$\text{div}(\widetilde{A} \nabla u)=0$ in $\Omega$
are the same as those for $\text{div}(A\nabla u)=0$ in $\Omega$.
Suppose now that $r_0=\text{diam}(\Omega)> (1/4)$. 
By rescaling, the boundary value problems
for $\text{div}(A\nabla u)=0$ in $\Omega$
are equivalent to that of
$\text{div}(A^1\nabla u)=0$ in $\Omega_1$,
where $A^1(X)=A(4r_0X)$ and 
$\Omega_1 =\{ X\in \mathbb{R}^d: 4r_0X\in \Omega\}$.
Since $\text{diam}(\Omega_1)= (1/4)$, we have reduced the case to the
previous one.
}
\end{remark}

By Remark \ref{periodicity-remark} it is enough to prove Theorem \ref{local-solvability}
under the additional assumption that $\text{diam}(\Omega)\le (1/4)$
and $A$ is periodic with respect to $\mathbb{Z}^d$ (thus $A\in \Lambda(\mu, \lambda, \tau)$).
Furthermore, in view of Theorem \ref{local-Rellich-imply-solvability},
it suffices to show that if $\mathcal{L}=-\text{div}(A\nabla )$
with $A\in \Lambda(\mu,\lambda, \tau)$ and $A^*=A$ and if
$\Omega$ is a Lipschitz domain with $\text{diam}(\Omega)\le (1/4)$, then
$\mathcal{L}$ has the Rellich property in $\Omega$
with constant $C(\Omega)$ depending only on $\mu$, $\lambda$, $\tau$
and the Lipschitz character of $\Omega$.
We point out that if $A$ is Lipschitz continuous, 
the Rellich property follows readily from the Rellich type identities,
as in case of constant coefficients (see e.g. \cite{fabes2}).
However, since it is essential to us that
the constant $C(\Omega)$ depends only on the Lipschitz character of $\Omega$,
 the proof for operators with H\"older continuous coefficients is quite
involved.

As we pointed out above,
Theorem \ref{local-solvability} is a consequence of the following.

\begin{thm}\label{Holder-continuous-Rellich}
Let $\mathcal{L}=-\text{div}(A\nabla u)$ with $A\in\Lambda(\mu, \lambda,\tau)$
and $A^*=A$.
Let $\Omega$ be a bounded Lipschitz domain with $\text{diam}(\Omega)\le (1/4)$
and connected boundary.
Then $\mathcal{L}$ has the Rellich property
in $\Omega$ with a ``good'' constant.
\end{thm}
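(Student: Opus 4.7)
The plan is to establish the local a priori estimates (\ref{Rellich-estimate-1.1}), from which the Rellich property with the required ``good'' constant follows by a partition of unity on $\partial\Omega$ and the localization machinery recalled in Section 1. Since $\text{diam}(\Omega) \le 1/4$, only small-scale arguments are needed; thus I fix a boundary coordinate chart, reducing to the model region $D(2r)$ and boundary piece $\Delta(r)$ with $r$ at most a constant multiple of $\text{diam}(\Omega)$.

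The core strategy is a freezing-of-coefficients argument. Choose a base point $P$ near $\Delta(2r)$ and approximate the solution $u$ of $\mathcal{L}(u)=0$ in $D(2r)$ by a companion $v$ solving the constant-coefficient equation $-\text{div}(A(P)\nabla v)=0$, with boundary data on $\partial D(2r)$ matched either in the conormal sense (for the Neumann version) or the Dirichlet sense (for the regularity version). To $v$ we apply the classical Rellich-Ne\v{c}as-Payne-Weinberger identity for the \emph{symmetric} constant-coefficient operator $\mathcal{L}^{A(P)}$, using a transverse vector field $h$ with $h\cdot n$ bounded below on $\Delta(2r)$ (e.g.\ $h=e_d$) and a cutoff $\eta\in C_0^\infty(D(2r))$ equal to $1$ on $D(r)$. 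This produces
$$\int_{\Delta(r)}|\nabla v|^2\,d\sigma\;\le\;C\int_{\Delta(2r)}\Big|\frac{\partial v}{\partial \nu_{A(P)}}\Big|^2 d\sigma +\frac{C}{r}\int_{D(2r)}|\nabla v|^2\,dX,$$
with an analogous bound in terms of the tangential gradient; here $C$ depends only on $\mu$ and $M$.

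The error $w=u-v$ then solves $\mathcal{L}(w)=\text{div}\big((A(P)-A(X))\nabla v\big)$ in $D(2r)$, and by the H\"older condition (\ref{smoothness}) the forcing matrix has pointwise size at most $\tau(2r)^\lambda$ on $D(2r)$. Combining energy estimates with the interior regularity of Lemma \ref{gradient-estimate-lemma} and the H\"older gradient estimate (\ref{local-Holder-gradient-estimate}), one bounds both $\int_{\Delta(r)}|\nabla w|^2\,d\sigma$ and $r^{-1}\int_{D(2r)}|\nabla w|^2\,dX$ by a factor of $r^{2\lambda}$ times the corresponding expressions for $u$, plus a lower-order interior term. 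Provided the $r^{2\lambda}$-gain can be absorbed on the left-hand side, this yields the claimed Rellich estimate for $u$ with a constant of the required type.

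The principal obstacle is that a single freezing at one point $P$ does not close the argument: the bound on $w$ itself requires Rellich-type control of $\nabla w$ on $\partial D(2r)$, and one cannot allow the constant to depend on $\|\nabla A\|_\infty$ as would happen in a naive Lipschitz approximation. The delicate three-step approximation announced in Section 1 is designed precisely to break this circularity, presumably by iterating the freezing procedure over dyadic sub-scales, at each step improving the approximation by a factor $r^\lambda$ and summing the resulting geometric series; the three approximations of matrices built into Lemmas \ref{fundamental-difference-lemma}--\ref{kernel-estimate-lemma} and the $C^\lambda$-continuity of the operators in Theorem \ref{operator-approximation-theorem} are the likely ingredients to keep the constants uniform. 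The symmetry hypothesis $A^*=A$ enters crucially in the constant-coefficient Rellich identity for $v$, where the boundary form terms assemble cleanly only under symmetry, and it is precisely this structure that lets the final constant depend only on $d,m,\mu,\lambda,\tau$ and the Lipschitz character of $\Omega$.
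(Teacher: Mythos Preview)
Your proposal does not close, and you essentially admit this yourself when you identify the ``principal obstacle'' and then speculate about a dyadic iteration of the freezing procedure. That speculation is not what the paper does, and I do not see how to make it work: controlling $\nabla w$ on $\partial D(2r)$ with a gain of $r^\lambda$ requires exactly the Rellich-type boundary estimate you are trying to prove, and iterating over sub-scales does not break this circularity because at every scale you face the same boundary-trace problem for the error.

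The paper's three-step argument is of a completely different nature and does \emph{not} proceed by freezing coefficients at a point. Step One proves the Rellich property (indeed, invertibility of $\pm\frac12 I+\mathcal{K}_A$) under the \emph{extra} hypothesis that $A$ is $C^1$ off $\partial\Omega$ with $|\nabla A(X)|\le C_1\,\text{dist}(X,\partial\Omega)^{\lambda_0-1}$; here the classical Rellich identity applies directly, the dangerous term $\int_\Omega |\nabla A||\nabla u|^2$ is split over a boundary layer of thickness $\rho r_0$ and its complement, and $\rho$ is chosen to absorb the error. Step Two manufactures, for a general H\"older $A$, a matrix $\widetilde{A}$ that (i) agrees with $A$ in a fixed neighborhood of $\partial\Omega$ and (ii) satisfies the Step One hypothesis: one first takes the harmonic (Poisson) extension $\bar{A}$ of $A|_{\partial\Omega}$ into $\Omega$ and its complement, then blends $A$ near $\partial\Omega$ with $\bar{A}$ away from it via a cutoff at distance $\rho$, and finally uses Theorem \ref{operator-approximation-theorem} to choose $\rho$ so small that $\mathcal{K}_{\widetilde{A}}$ inherits invertibility from $\mathcal{K}_{\bar{A}}$. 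Step Three is a perturbation lemma (via the Green representation \eqref{Green-representation-formula-1}) showing that if $A^0=A^1$ near $\partial\Omega$ and $\mathcal{L}^{A^0}$ has the Rellich property, so does $\mathcal{L}^{A^1}$. Applying this with $A^0=\widetilde{A}$ and $A^1=A$ finishes the proof.

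So the key idea you are missing is not a refinement of freezing but rather the construction of a coefficient matrix with controlled $|\nabla A|$ near $\partial\Omega$ via harmonic extension of the boundary values, followed by a layer-potential perturbation to transfer invertibility.
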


By translation we may assume that $0\in \Omega$ and thus $\Omega\subset [-1/4,1/4]^d$.
We divide the proof of Theorem \ref{Holder-continuous-Rellich} into three steps.

\noindent {\it Step One}: Establish the invertibility of $\pm (1/2)I +\mathcal{K}_A$
under the additional assumption that
\begin{equation}\label{additional-assumption-1}
\left\{ \aligned
& A \in C^1 ([-1/2, 1/2]^d\setminus \partial\Omega),\\
& |\nabla A(X)|\le C_1  \big\{ \text{dist}(X, \partial\Omega)\big\}^{\lambda_0-1}
\text{ for any } X\in [-1/2, 1/2]^d \setminus \partial\Omega,
\endaligned
\right.
\end{equation}
where $\lambda_0\in (0,1)$.

Clearly, if $A\in C^1([-1/2,1/2]^d)$, then it satisfies (\ref{additional-assumption-1}).

\begin{lemma}\label{lemma-5.1}
Let $\Omega$ be a bounded Lipschitz domain with connected boundary.
Suppose that $0\in \Omega$ and $r_0=\text{diam}(\Omega)\le (1/4)$.
Let $A\in \Lambda(\mu, \lambda,\tau)$ be such that $A^*=A$ and condition
(\ref{additional-assumption-1}) holds.
Assume that $\mathcal{L}(u)=0$ in $\Omega$,
$(\nabla u)^*\in L^2 (\partial\Omega)$ and $(\nabla u)_+$ exists
n.t. on $\partial\Omega$.
Then
\begin{equation}\label{local-interior-estimate}
\aligned
& \int_{\partial\Omega} |\nabla u|^2\, d\sigma
\le C\int_{\partial\Omega} |\frac{\partial u}{\partial\nu}|^2\, d\sigma
+C\int_\Omega (|\nabla A|+r_0^{-1}) |\nabla u|^2\, dX,\\
& \int_{\partial\Omega} |\nabla u|^2\, d\sigma
\le C\int_{\partial\Omega} |\nabla_{tan} u|^2\, d\sigma
+C\int_\Omega (|\nabla A|+r_0^{-1}) |\nabla u|^2\, dX,
\endaligned
\end{equation}
where $C$ depends only on $\mu$ and  the Lipschitz character of $\Omega$.
\end{lemma}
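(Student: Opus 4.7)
My approach is a Rellich--Ne\v{c}as--Payne--Weinberger identity adapted to variable, symmetric coefficients. Multiplying $\mathcal{L}(u)=0$ by $h_k\partial_k u^\alpha$ for a smooth vector field $h=(h_1,\ldots,h_d)$ on $\br^d$ and integrating by parts twice, with the second application exploiting $A^*=A$, yields
\begin{equation*}
\int_{\partial\Omega}(h\cdot n)\,a_{ij}^{\alpha\beta}\partial_i u^\alpha\partial_j u^\beta\,d\sigma
= 2\int_{\partial\Omega}(h_k\partial_k u^\alpha)\left(\frac{\partial u}{\partial\nu}\right)^\alpha d\sigma + \mathrm{Int}(h,A,u),
\end{equation*}
where $\mathrm{Int}(h,A,u)$ is a sum of interior integrals bounded by $C\int_\Omega(|\nabla h|+|h|\,|\nabla A|)|\nabla u|^2\,dX$. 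The strategy is: (i) justify the identity under condition (\ref{additional-assumption-1}); (ii) choose $h$ adapted to the Lipschitz geometry; (iii) extract both inequalities from the identity.

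For step (i), since (\ref{additional-assumption-1}) only provides $|\nabla A(X)|\le C_1\,\mathrm{dist}(X,\partial\Omega)^{\lambda_0-1}$, I first apply the identity on $\Omega_t=\{X\in\Omega:\mathrm{dist}(X,\partial\Omega)>t\}$, where $A$ is Lipschitz, and then pass $t\to 0^+$. Boundary integrals over $\partial\Omega_t$ converge to those over $\partial\Omega$ by the nontangential convergence of $\nabla u$ and Lebesgue dominated convergence with majorant $(\nabla u)^*\in L^2(\partial\Omega)$; interior integrals converge by monotone convergence, with the requisite finiteness $\int_\Omega \mathrm{dist}(X,\partial\Omega)^{\lambda_0-1}|\nabla u|^2\,dX<\infty$ supplied by a coarea slicing argument combined with the nontangential bound. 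For step (ii), cover $\partial\Omega$ by finitely many balls $B(P_i,cr_0)$ in which $\Omega$ is a graph $x_d>\psi_i(x')$, take $h$ to be a suitable constant multiple of $e_d$ in each chart (with orientation so that $h\cdot n\ge c(M)>0$), and glue with cutoffs at scale $r_0$. This produces $|h|\le C$ and $|\nabla h|\le Cr_0^{-1}$, so $|\mathrm{Int}(h,A,u)|\le C\int_\Omega(r_0^{-1}+|\nabla A|)|\nabla u|^2\,dX$, matching the interior error in (\ref{local-interior-estimate}); ellipticity then gives $\int_{\partial\Omega}(h\cdot n)a_{ij}^{\alpha\beta}\partial_i u^\alpha\partial_j u^\beta\,d\sigma\ge c\mu\int_{\partial\Omega}|\nabla u|^2\,d\sigma$.

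For step (iii), the first inequality in (\ref{local-interior-estimate}) follows by bounding $|(h\cdot\nabla u)(\partial u/\partial\nu)|\le\varepsilon|\nabla u|^2+C_\varepsilon|\partial u/\partial\nu|^2$ on $\partial\Omega$ and absorbing the $\varepsilon$ piece. For the second inequality, decompose $\partial_j u^\beta=n_j(\partial_n u)^\beta+(\nabla_{\mathrm{tan}} u)_j^\beta$ on $\partial\Omega$, giving $(\partial u/\partial\nu)^\alpha=B^{\alpha\beta}(\partial_n u)^\beta+T^\alpha(\nabla_{\mathrm{tan}} u)$ with $B^{\alpha\beta}=a_{ij}^{\alpha\beta}n_i n_j$ elliptic and $T$ linear in tangential derivatives. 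Substituting both decompositions into the Rellich identity, the $(\partial_n u)\cdot T$ cross terms cancel and one obtains $\int_{\partial\Omega}(h\cdot n)B(\partial_n u,\partial_n u)\,d\sigma\le C\int_{\partial\Omega}|\nabla_{\mathrm{tan}}u|^2\,d\sigma + C\int_{\partial\Omega}|\nabla_{\mathrm{tan}}u|\,|\partial u/\partial\nu|\,d\sigma+|\mathrm{Int}|$; estimating $|\partial u/\partial\nu|\le C(|\partial_n u|+|\nabla_{\mathrm{tan}}u|)$ and absorbing by Cauchy's inequality with $\varepsilon$ yields $\int|\partial_n u|^2\lesssim\int|\nabla_{\mathrm{tan}}u|^2+\text{interior}$, and hence the second inequality via $|\nabla u|^2=|\partial_n u|^2+|\nabla_{\mathrm{tan}}u|^2$.

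The main obstacle will be step (i): passing to the limit $t\to 0^+$ with the unbounded $|\nabla A|$ inside $\mathrm{Int}(h,A,u)$, and verifying that the additional boundary contributions on $\partial\Omega_t$ (arising because $A$ is only interior-Lipschitz) are controlled by the terms already present in the final estimate. The sharp $r_0^{-1}$ factor further requires the localization scale in (ii) to be proportional to $r_0$, so that a single constant $C$ depending only on the Lipschitz character of $\Omega$ (and not on $r_0$) suffices.
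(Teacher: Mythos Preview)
Your proposal is correct and follows essentially the same approach as the paper: derive the Rellich--Ne\v{c}as--Payne--Weinberger identity with a $C^1$ vector field $h$ satisfying $\langle h,n\rangle\ge c>0$, $|h|\le C$, $|\nabla h|\le Cr_0^{-1}$, bound the interior remainder by $C\int_\Omega(|\nabla h|+|h||\nabla A|)|\nabla u|^2\,dX$, and read off both inequalities. The paper states the tangential version as a second identity with boundary term $2\int_{\partial\Omega} h_k a_{ij}^{\alpha\beta}\partial_j u^\beta(n_k\partial_i-n_i\partial_k)u^\alpha\,d\sigma$, but this is algebraically equivalent to your normal/tangential decomposition (your observation that the $(\partial_n u)\cdot T$ cross terms cancel is precisely the content of that rewriting); your step~(i) approximation from inside is the justification the paper leaves implicit.
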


\begin{proof}
Let $\mathbf{h}$ be a $C^1$ vector field on $\mathbb{R}^d$
such that supp$(\mathbf{h})\subset \{X: \text{dist}(X, \partial\Omega)<c r_0\}$,
$|\nabla \mathbf{h}|\le Cr_0^{-1}$ and
$<\mathbf{h}, n> \ge c>0$ on $\partial\Omega$.
As in the case of constant coefficients,
the estimates in (\ref{local-interior-estimate}) follow from the so-called
Rellich identities,
\begin{equation}\label{local-Rellich}
\aligned
& \int_{\partial\Omega}
<\mathbf{h}, n> a_{ij}^{\alpha\beta} \frac{\partial u^\alpha}{\partial x_i}
\cdot \frac{\partial u^\beta}{\partial x_j}\, d\sigma
=2 \int_{\partial\Omega} <\mathbf{h}, \nabla u^\alpha> 
\left(\frac{\partial u}{\partial \nu}\right)^\alpha\, d\sigma + I_1,\\
&
\int_{\partial\Omega}
<\mathbf{h}, n> a_{ij}^{\alpha\beta} \frac{\partial u^\alpha}{\partial x_i}
\cdot \frac{\partial u^\beta}{\partial x_j}\, d\sigma
=2\int_{\partial\Omega}
h_k a_{ij}^{\alpha \beta} \frac{\partial u^\beta}{\partial x_j}
\left(n_k \frac{\partial}{\partial x_i}
-n_i \frac{\partial}{\partial x_k}\right) u^\alpha\, d\sigma
+I_2
\endaligned
\end{equation}
where 
$$
|I_1| +|I_2|
\le C \int_{\Omega}\big\{
 |\nabla\mathbf{h}| + |\mathbf{h}| |\nabla A|\big\} |\nabla u|^2\, dX
$$
and $C$ depends only on $\mu$.
The proof of (\ref{local-Rellich}), which uses integration by parts and
the assumption that $A^*=A$, is similar to the case of constant coefficients. 
The latter may be found in \cite{fabes2}.
\end{proof}

\begin{remark}\label{local-exterior-remark}
{\rm 
Let $\mathcal{L}(u)=0$ in $(-1/2,1/2)^d\setminus \overline{\Omega}$.
Suppose that $(\nabla u)^*\in L^2(\partial\Omega)$ and $(\nabla u)_-$ exists
n.t. on $\partial\Omega$. Under the same conditions on $\Omega$ and $A$ as in
Lemma \ref{lemma-5.1}, we have
\begin{equation}\label{local-exterior-estimate}
\aligned
& \int_{\partial\Omega} |(\nabla u)_-|^2\, d\sigma
\le C\int_{\partial\Omega} |\big(\frac{\partial u}{\partial\nu}\big)_-|^2\, d\sigma
+C\int_{\Omega_-\cap [-1/2,1/2]^d}
 (|\nabla A|+r_0^{-1}) |\nabla u|^2\, dX,\\
& \int_{\partial\Omega} |(\nabla u)_-|^2\, d\sigma
\le C\int_{\partial\Omega} |(\nabla_{tan} u)_-|^2\, d\sigma
+C\int_{\Omega_-\cap [-(1/2),1/2]^d} (|\nabla A|+r_0^{-1}) |\nabla u|^2\, dX,
\endaligned
\end{equation}
where $C$ depends only on $\mu$ and the Lipschitz character of $\Omega$.
The proof is similar to that of Lemma \ref{lemma-5.1}.
}
\end{remark}

\begin{lemma}\label{local-interior-lemma-1}
Under the same assumptions as in Lemma \ref{lemma-5.1}, we have
\begin{equation}\label{local-interior-estimate-1}
\aligned
\int_{\partial\Omega}
|\nabla u|^2\, d\sigma 
& \le C \big\{ 1+ r_0^{2\lambda_0} \rho^{2\lambda_0-2}\big\}
\int_{\partial\Omega}
\big|\frac{\partial u}{\partial\nu}\big|^2\, d\sigma
+C (\rho r_0 )^{\lambda_0} \int_{\partial\Omega} |(\nabla u)^*|^2\, d\sigma,\\
\int_{\partial\Omega}
|\nabla u|^2\, d\sigma 
& \le C \big\{ 1+ r_0^{2\lambda_0} \rho^{2\lambda_0-2}\big\}
\int_{\partial\Omega}
\big|\nabla_{tan} u|^2\, d\sigma
+C (\rho r_0)^{\lambda_0} \int_{\partial\Omega} |(\nabla u)^*|^2\, d\sigma,
\endaligned
\end{equation}
where $0<\rho<1$ and $C$ depends only on $\mu$, 
the Lipschitz character of $\Omega$ and
$\lambda_0$, $C_1$ in (\ref{additional-assumption-1}).
\end{lemma}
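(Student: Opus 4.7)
The plan is to start from the Rellich-type bound of Lemma~\ref{lemma-5.1}, namely
\[
 \int_{\partial\Omega}|\nabla u|^2\,d\sigma\le C\int_{\partial\Omega}\Big|\tfrac{\partial u}{\partial\nu}\Big|^2\,d\sigma+C\int_\Omega(|\nabla A|+r_0^{-1})|\nabla u|^2\,dX,
\]
and to absorb the solid integral by splitting $\Omega$ at the level $\delta(X)=\rho r_0$, where $\delta(X)=\mathrm{dist}(X,\partial\Omega)$. By hypothesis~(\ref{additional-assumption-1}), the solid integral is dominated by $C_1\int_\Omega\delta^{\lambda_0-1}|\nabla u|^2\,dX+r_0^{-1}\int_\Omega|\nabla u|^2\,dX$, so the task is to estimate these two pieces.

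For the near-boundary region $\{\delta\le\rho r_0\}$ I would use a dyadic decomposition into shells $\Omega_k=\{\rho r_0\cdot 2^{-k-1}<\delta\le\rho r_0\cdot 2^{-k}\}$, $k\ge 0$. On $\Omega_k$ we have $\delta^{\lambda_0-1}\le C(\rho r_0\cdot 2^{-k})^{\lambda_0-1}$, while the standard Lipschitz slab estimate $\int_{\{\delta\le s\}}|\nabla u|^2\,dX\le Cs\int_{\partial\Omega}|(\nabla u)^*|^2\,d\sigma$ yields $\int_{\Omega_k}|\nabla u|^2\,dX\le C\rho r_0\cdot 2^{-k}\int_{\partial\Omega}|(\nabla u)^*|^2\,d\sigma$. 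Multiplying and summing the geometric series (convergent because $\lambda_0>0$) produces
\[
\int_{\{\delta\le\rho r_0\}}\delta^{\lambda_0-1}|\nabla u|^2\,dX\le C(\rho r_0)^{\lambda_0}\int_{\partial\Omega}|(\nabla u)^*|^2\,d\sigma,
\]
which is the small ``good'' term. In the complementary region $\{\delta>\rho r_0\}$ the trivial bound $\delta^{\lambda_0-1}\le(\rho r_0)^{\lambda_0-1}$ reduces everything to estimating the global energy $\int_\Omega|\nabla u|^2\,dX$.

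That global energy is controlled via the Green identity~(\ref{Green-identity}) applied to $u-u_{\partial\Omega}$ (still a solution with the same gradient); combined with ellipticity, Cauchy--Schwarz on $\partial\Omega$, and Poincar\'e's inequality on the connected boundary ($\|u-u_{\partial\Omega}\|_{L^2(\partial\Omega)}\le Cr_0\|\nabla_{tan}u\|_{L^2(\partial\Omega)}$), one obtains
\[
 \int_\Omega|\nabla u|^2\,dX\le Cr_0\|\nabla u\|_{L^2(\partial\Omega)}\Big\|\tfrac{\partial u}{\partial\nu}\Big\|_{L^2(\partial\Omega)}.
\]
Inserting this into the far-boundary piece contributes $C\rho^{\lambda_0-1}r_0^{\lambda_0}\|\nabla u\|_{L^2(\partial\Omega)}\|\partial u/\partial\nu\|_{L^2(\partial\Omega)}$, and into the $r_0^{-1}\int_\Omega|\nabla u|^2$ piece contributes $C\|\nabla u\|_{L^2(\partial\Omega)}\|\partial u/\partial\nu\|_{L^2(\partial\Omega)}$. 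Applying Young's inequality $ab\le\varepsilon a^2+C\varepsilon^{-1}b^2$ with $a=\|\nabla u\|_{L^2(\partial\Omega)}$ and $\varepsilon$ small enough to absorb $\varepsilon\int_{\partial\Omega}|\nabla u|^2\,d\sigma$ into the left-hand side yields exactly the factor $C\{1+r_0^{2\lambda_0}\rho^{2\lambda_0-2}\}$ in front of $\|\partial u/\partial\nu\|_2^2$, proving the first inequality of~(\ref{local-interior-estimate-1}). For the second inequality I would start from the tangential-gradient bound in Lemma~\ref{lemma-5.1} and, after invoking the same energy identity, use $\|\partial u/\partial\nu\|_{L^2}\le C\|\nabla u\|_{L^2(\partial\Omega)}$ before applying Young's inequality, so that the surviving factor stands in front of $\|\nabla_{tan}u\|_2^2$.

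The main technical point is the Lipschitz slab estimate $\int_{\{\delta\le s\}}|\nabla u|^2\,dX\le Cs\int_{\partial\Omega}|(\nabla u)^*|^2\,d\sigma$ with constant depending only on the Lipschitz character, together with the book-keeping that makes the dyadic sum, the energy identity, and the Young absorption all produce constants depending only on $\mu$, $\lambda_0$, $C_1$, and the Lipschitz character of $\Omega$. Both points are standard once the nontangential cones and the inward approximation of $\Omega$ are arranged carefully in each boundary chart.
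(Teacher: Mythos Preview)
Your argument is correct and follows essentially the same route as the paper: split $\Omega$ at $\delta=\rho r_0$, bound the near-boundary piece by $C(\rho r_0)^{\lambda_0}\|(\nabla u)^*\|_2^2$, control the far-boundary piece and the $r_0^{-1}\int_\Omega|\nabla u|^2$ term via the Green-identity estimate $\int_\Omega|\nabla u|^2\le Cr_0\|\partial u/\partial\nu\|_2\|\nabla_{tan}u\|_2$, and absorb with Young's inequality. The only cosmetic difference is that you spell out the near-boundary bound via a dyadic shell decomposition, whereas the paper states it in one line; both yield the same estimate with constants depending only on $\mu$, $\lambda_0$, $C_1$, and the Lipschitz character.
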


\begin{proof} Write $\Omega=F_1 \cup F_2$, 
where $F_1=\{ X\in \Omega: \text{dist}(X,\partial\Omega)\le \rho r_0\}$
and $F_2=\{ X \in \Omega: \text{dist}(X,\partial\Omega)>\rho r_0\}$.
Using the condition (\ref{additional-assumption-1}), we obtain
\begin{equation}
\aligned
\int_\Omega |\nabla A ||\nabla u|^2\, dX
&\le C_1 \int_{F_1} \{ \text{dist}(X,\partial\Omega)\}^{\lambda_0-1} |\nabla u|^2\, dX
+C_1 (\rho r_0)^{\lambda_0-1} \int_{F_2} |\nabla u|^2\, dX\\
& \le C(\rho r_0)^{\lambda_0} \int_{\partial\Omega} |(\nabla u)^*|^2\, d\sigma
+C_1 (\rho r_0)^{\lambda_0-1} \int_\Omega |\nabla u|^2\, dX.
\endaligned
\end{equation}
This, together with (\ref{local-interior-estimate}) and (\ref{Green-estimate})
for $\Omega_+$,
gives
\begin{equation}\label{I-5}
\|\nabla u\|_2^2
\le C\|\frac{\partial u}{\partial\nu}\|_2^2
+C (1+r_0^{\lambda_0} \rho^{\lambda_0-1})\|\frac{\partial u}{\partial\nu}\|_2
\|\nabla_{tan} u\|_2
+C (\rho r_0)^{\lambda_0} \|(\nabla u)^*\|_2^2.
\end{equation}
The first inequality in (\ref{local-interior-estimate-1})
follows from (\ref{I-5}) by the Cauchy inequality with an $\varep$.
The proof of the second inequality in (\ref{local-interior-estimate-1})
is similar.
\end{proof}

\begin{remark}
{\rm
Let $\mathcal{L}(u)=0$ in $\Omega_-$.
Suppose that $(\nabla u)^*\in L^2 (\partial\Omega)$, $(\nabla u)_-$
exists n.t. on $\partial\Omega$, and
$|u(X)|=O(|X|^{2-d})$ as $|X|\to\infty$.
In view of Remark \ref{local-exterior-remark} and (\ref{Green-estimate})
for $\Omega_-$, 
the same argument as in the proof of 
Lemma \ref{local-interior-lemma-1} shows that
\begin{equation}\label{local-exterior-estimate-1}
\aligned
\int_{\partial\Omega}|(\nabla u)_-|^2 \, d\sigma 
& \le C \big\{ 1+ r_0^{2\lambda_0} \rho^{2\lambda_0-2}\big\}
\| \big( \frac{\partial u}{\partial\nu}\big)_-\|_2^2
+C (\rho r_0 )^{\lambda_0} \|(\nabla u)^*\|_2^2\\
& \qquad\qquad \qquad 
+ C (\rho r_0)^{\lambda_0 -1} |u_{\partial\Omega}|
\left| \int_{\partial \Omega} \left( \frac{\partial u}{\partial \nu}\right)_-\, d\sigma\right|,\\
\int_{\partial\Omega}|(\nabla u)_-|^2\, d\sigma 
& \le C \big\{ 1+ r_0^{2\lambda_0} \rho^{2\lambda_0-2}\big\}
\| \big( \nabla_{tan} u)_-\|_2^2
+C (\rho r_0 )^{\lambda_0} \|(\nabla u)^*\|_2^2\\
& \qquad\qquad\qquad
+ C (\rho r_0)^{\lambda_0 -1} |u_{\partial\Omega}|
\left| \int_{\partial \Omega} \left( \frac{\partial u}{\partial \nu}\right)_-\, d\sigma\right|,
\endaligned
\end{equation}
for any $0<\rho<1$.
}
\end{remark}

The following theorem completes Step One.

\begin{thm}{\label{step-one-theorem}}
Suppose that $\Omega$ and $A$ satisfy the conditions 
in Theorem \ref{Holder-continuous-Rellich}.
We further assume that $0\in \Omega$ and
$A$ satisfies (\ref{additional-assumption-1}).
Then $(1/2)I +\mathcal{K}_A$ and $-(1/2)I +\mathcal{K}_A $ are invertible
on $L^2_0(\partial\Omega, \mathbb{R}^m)$ and $L^2(\partial\Omega, \mathbb{R}^d)$
respectively, and the estimates in (\ref{inverse-bound}) 
hold with a constant $C$ depending only on
$\mu$, $\lambda$, $\tau$, the Lipschitz character of $\Omega$
 and $C_1$, $\lambda_0$ in (\ref{additional-assumption-1}).
\end{thm}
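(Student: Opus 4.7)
My plan is to follow the method-of-continuity template of Lemma~\ref{invertibility-of-Neumann-operator} applied to the interpolating family $A^s = sA + (1-s)I$, $s\in [0,1]$. The crucial observation is that $|\nabla A^s| = s|\nabla A| \le |\nabla A|$ on $[-1/2,1/2]^d \setminus \partial\Omega$, so every $A^s$ satisfies (\ref{additional-assumption-1}) with the same $C_1$ and $\lambda_0$; moreover each $A^s \in \Lambda(\mu,\lambda,\tau)$ with $(A^s)^* = A^s$. At $s = 0$ we have $A^0 = I$, and invertibility of $\pm(1/2)I + \mathcal{K}_I$ on $L^2_0(\partial\Omega,\mathbb{R}^m)$ and $L^2(\partial\Omega,\mathbb{R}^m)$ is the classical result of Verchota~\cite{Verchota-1984}. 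By Theorem~\ref{operator-approximation-theorem} and $\|A^{s_1} - A^{s_2}\|_{C^\lambda(\mathbb{R}^d)} \le |s_1 - s_2|\|A - I\|_{C^\lambda(\mathbb{R}^d)}$, the family $\{\pm(1/2)I + \mathcal{K}_{A^s}\}_{s \in [0,1]}$ is norm-continuous. It therefore suffices to establish the uniform-in-$s$ a priori lower bound
$$\|f\|_2 \le C\,\bigl\|\bigl(\pm\tfrac12 I + \mathcal{K}_{A^s}\bigr) f\bigr\|_2,$$
with $C$ depending only on $\mu, \lambda, \tau$, the Lipschitz character of $\Omega$, and $C_1,\lambda_0$.

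To derive this coercivity bound, fix $s$, let $u = \mathcal{S}_{A^s}(f)$, and apply Lemma~\ref{local-interior-lemma-1} in $\Omega$ together with its exterior analogue (\ref{local-exterior-estimate-1}) in $(-1/2,1/2)^d \setminus \overline{\Omega}$. For $f \in L^2_0$ (the $+$ case), the jump relation gives $\int_{\partial\Omega}(\partial u/\partial\nu)_-\,d\sigma = -\int f\,d\sigma = 0$, so the mean-value correction in (\ref{local-exterior-estimate-1}) vanishes; for general $f \in L^2$ (the $-$ case), split $f = (f - f_{\partial\Omega}) + f_{\partial\Omega}$ and handle the constant piece directly. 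Theorem~\ref{layer-potential-maximal-theorem} yields $\|(\nabla u)^*\|_2 \le C_0\|f\|_2$, and the jump relation (\ref{jump-relation}) yields $\|f\|_2 \le \|(\partial u/\partial\nu)_+\|_2 + \|(\partial u/\partial\nu)_-\|_2$. Feeding the nontangential bound into the error term $C(\rho r_0)^{\lambda_0}\|(\nabla u)^*\|_2^2$ of (\ref{local-interior-estimate-1}) and using both forms of that estimate (the one with $\|(\partial_\nu u)_\pm\|_2$ and the one with $\|\nabla_{\mathrm{tan}} u\|_2$) on each side of $\partial\Omega$, one chooses $\rho \in (0,1)$ to absorb the error, producing the separated Rellich-type estimates
$$\|(\nabla u)_\pm\|_2 \le \tilde C\,\|(\partial u/\partial\nu)_\pm\|_2, \qquad \|(\nabla u)_\pm\|_2 \le \tilde C\,\|\nabla_{\mathrm{tan}} u\|_2,$$
with $\tilde C$ depending on all allowed parameters.

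With these in hand, the chain from the proof of Lemma~\ref{invertibility-of-Neumann-operator},
$$\|(\partial u/\partial\nu)_\pm\|_2 \le C\|(\nabla u)_\pm\|_2 \le C\|\nabla_{\mathrm{tan}} u\|_2 \le C\|(\nabla u)_\mp\|_2 \le C\|(\partial u/\partial\nu)_\mp\|_2,$$
gives $\|(\partial_\nu u)_-\|_2 \le C\|(\partial_\nu u)_+\|_2$ (and the reverse), and the jump relation then produces the desired coercivity. The main technical obstacle is the absorption step, because the constant $C_\rho \sim C_1^2 r_0^{2\lambda_0}\rho^{2\lambda_0-2}$ in (\ref{local-interior-estimate-1}) blows up as $\rho \to 0$ whenever $\lambda_0 < 1$. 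Balancing $C_\rho$ against the shrinking factor $(\rho r_0)^{\lambda_0}$ requires careful bookkeeping and most likely a case split according to whether $r_0 \le r^*(C_1,\lambda_0)$ or $r_0 > r^*(C_1,\lambda_0)$: in the small-$r_0$ regime, $\rho$ can be chosen so that a single absorption closes the estimate, while in the remaining regime $r_0$ itself is bounded below in terms of $C_1,\lambda_0$, so all constants become ``good'' once those parameters are fixed. Once this absorption is in place, combining the uniform coercivity with invertibility at $s = 0$ and the operator-norm continuity from Theorem~\ref{operator-approximation-theorem} closes the method of continuity, yielding the operator norm estimates (\ref{inverse-bound}).
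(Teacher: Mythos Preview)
Your overall strategy (continuity method along $A^s$, Lemma~\ref{local-interior-lemma-1} interior plus its exterior analogue (\ref{local-exterior-estimate-1}), nontangential bound from Theorem~\ref{layer-potential-maximal-theorem}, jump relation) matches the paper exactly. The gap is in the absorption step: you cannot obtain the \emph{separated} Rellich estimates $\|(\nabla u)_\pm\|_2 \le \tilde C\,\|(\partial u/\partial\nu)_\pm\|_2$ as an intermediate step. The error term in (\ref{local-interior-estimate-1}) is $C(\rho r_0)^{\lambda_0}\|(\nabla u)^*\|_2^2 \le C(\rho r_0)^{\lambda_0}\|f\|_2^2$, and there is no a~priori lower bound $\|(\nabla u)_+\|_2 \ge c\|f\|_2$ to absorb this into the left-hand side. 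Your proposed case split on $r_0$ does not resolve this: regardless of how small $r_0$ is, you are still trying to hide a multiple of $\|f\|_2^2$ inside $\|(\nabla u)_+\|_2^2$, which is circular.

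The paper avoids this by \emph{not} proving separated Rellich estimates. Instead it carries the error terms through the chain and absorbs only at the very end, where both sides are $\|f\|_2$. Concretely, for $f\in L^2_0$ and $u=\mathcal{S}(f)$: from the exterior tangential estimate (\ref{local-exterior-estimate-1}) and $(\nabla_{\tan}u)_-=(\nabla_{\tan}u)_+$ one gets $\|(\nabla u)_-\|_2 \le C\rho_1^{\lambda_0-1}\|(\nabla u)_+\|_2 + C\rho_1^{\lambda_0/2}\|f\|_2$; from the interior conormal estimate (\ref{local-interior-estimate-1}) one gets $\|(\nabla u)_+\|_2 \le C\rho_2^{\lambda_0-1}\|(\partial u/\partial\nu)_+\|_2 + C\rho_2^{\lambda_0/2}\|f\|_2$. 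Substituting and using the jump relation yields
\[
\|f\|_2 \le C\rho_1^{\lambda_0-1}\rho_2^{\lambda_0-1}\|(\tfrac12 I+\mathcal{K}_A)f\|_2 + C\bigl(\rho_1^{\lambda_0-1}\rho_2^{\lambda_0/2} + \rho_1^{\lambda_0/2}\bigr)\|f\|_2.
\]
Now choose $\rho_1$ first to make $C\rho_1^{\lambda_0/2}\le 1/4$, then $\rho_2$ (with $\rho_1$ fixed) to make $C\rho_1^{\lambda_0-1}\rho_2^{\lambda_0/2}\le 1/4$. The two-parameter sequential choice is the point; no case split on $r_0$ is needed.
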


\begin{proof}
Let $f\in L^2_0(\partial\Omega, \mathbb{R}^m)$ and $u=\mathcal{S}(f)$ be the single
layer potential. 
In view of (\ref{local-exterior-estimate-1}), we obtain
\begin{equation}\label{I-5.6}
\| (\nabla u)_-\|_2 \le \rho_1^{\lambda_0-1} \|(\nabla u)_+\|_2 +C\rho_1^{\lambda_0/2}
\| f\|_2,
\end{equation}
for any $0<\rho_1<1$,
where we also used the fact that $(\nabla_{tan} u)_- =(\nabla_{tan} u)_+$ and
$\|(\nabla u)^*\|_2 \le C\| f\|_2$.
Similarly, by (\ref{local-interior-estimate-1}),
\begin{equation}\label{I-5.7}
\|(\nabla u)_+\|_2 
\le C\rho_2^{\lambda_0-1} \| \big(\frac{\partial u}{\partial \nu}\big)_+\|_2
+C\rho_2^{\lambda_0/2} \| f\|_2,
\end{equation}
for any $0<\rho_2<1$. It follows from the jump relation (\ref{jump-relation}),
(\ref{I-5.6}) and (\ref{I-5.7}) that
\begin{equation}\label{I-5.8}
\aligned
\| f\|_2
& \le \| \big(\frac{\partial u}{\partial \nu}\big)_+\|_2
+\| \big(\frac{\partial u}{\partial \nu}\big)_-\|_2\\
& \le C \rho_1^{\lambda_0-1}\rho_2^{\lambda_0-1}
\| \big(\frac{\partial u}{\partial \nu}\big)_+\|_2
+ C\big\{ \rho_1^{\lambda_0-1}\rho_2 ^{\lambda_0/2} +\rho_1^{\lambda_2/2}\big\}
\| f\|_2.
\endaligned
\end{equation}
We now choose $\rho_1\in (0,1)$ and then $\rho_2\in (0,1)$ so that
$C\big\{ \rho_1^{\lambda_0-1}\rho_2 ^{\lambda_0/2} +\rho_1^{\lambda_2/2}\big\}
\le (1/2)$.
This gives
\begin{equation}\label{I-5.9}
\| f\|_2 \le C \| \big(\frac{\partial u}{\partial \nu}\big)_+\|_2
=C \| ((1/2)I +\mathcal{K}_A ) f\|_2,
\end{equation}
for any $f\in L^2_0(\partial\Omega, \mathbb{R}^m)$.
The same argument also shows that for any $f\in L^2_0(\partial\Omega, \mathbb{R}^m)$,
\begin{equation}\label{I-5.10}
\| f\| \le C \| \big(\frac{\partial u}{\partial \nu}\big)_-\|_2
=C \| (-(1/2)I +\mathcal{K}_A ) f\|_2.
\end{equation}
The rest of the proof is the same as that of Lemma \ref{invertibility-of-Neumann-operator}.
\end{proof}

\begin{remark}\label{local-single-layer-invertibility-remark-1}
{\rm
Let $f\in L^2(\partial\Omega)$ and $u=\mathcal{S}(f)$. It follows from 
(\ref{local-interior-estimate-1}) and (\ref{local-exterior-estimate-1}) that
\begin{equation}\label{I-5.13}
\aligned
\| (\nabla u)_+\|_2  & \le C\rho_1^{\lambda_0 -1} \|\nabla_{tan} u\|_2
+C\rho_1^{\lambda_0/2} \| f\|_2,\\
\| (\nabla u)_-\|_2
&\le C \rho_2^{\lambda_0-1} \|\nabla_{tan} u\|_2 +C \rho_2^{\lambda_0/2} \| f\|_2
+ C r_0^{-1}\| u\|_2,
\endaligned
\end{equation}
for any $\rho_1, \rho_2 \in (0,1)$. This, together with the jump relation,
implies
\begin{equation}\label{I-5.14}
\| f\|_2 \le C \|\nabla_{tan} \mathcal{S}(f)\|_2 +C r_0^{-1} \| \mathcal{S}(f)\|_2
\le C \| \mathcal{S}(f)\|_{1,2}.
\end{equation}
Thus $\mathcal{S}: L^2(\partial\Omega, \mathbb{R}^m)
\to W^{1,2}(\partial\Omega, \mathbb{R}^m)$ is
one-to-one.
A continuity argument similar to that in the proof of Lemma \ref{invertibility-of-Neumann-operator}
shows that the operator is in fact invertible.
}
\end{remark}

\section{Solvability for small scales, Part II}

In this section we complete the second and third steps in
the proof of Theorems \ref{Holder-continuous-Rellich}.

\noindent{\it Step Two}: Given any $A\in \Lambda(\mu, \lambda, \tau)$
and $\Omega$ such that $A^*=A$, $0\in \Omega$ and $r_0=$diam$(\Omega)\le (1/4)$,
construct $\widetilde{A}\in \Lambda(\mu, \lambda_0, \tau_0)$ 
with $\lambda_0$ and $\tau_0$ depending only
on $\mu$, $\lambda$, $\tau$ and the Lipschitz character of $\Omega$,
such that
\begin{equation}\label{part-2.1}
\widetilde{A}(X)= A(X) \qquad \text{ if } \text{dist}(X, \partial \Omega)
\le cr_0,
\end{equation}
and such that the operators
\begin{equation}\label{part-2.2}
\aligned
 (1/2)I +\mathcal{K}_{\widetilde{A}}: 
L_0^2(\partial\Omega, \mathbb{R}^m) & \to L^2_0(\partial\Omega, \mathbb{R}^m),\\
 -(1/2)I +\mathcal{K}_{\widetilde{A}}: 
L^2(\partial\Omega, \mathbb{R}^m) & \to L^2(\partial\Omega, \mathbb{R}^m),\\
 \mathcal{S}_{\widetilde{A}}: L^2(\partial\Omega, \mathbb{R}^m)
& \to W^{1,2}(\partial\Omega, \mathbb{R}^m)
\endaligned
\end{equation}
are invertible and the operator norms of their inverses are
bounded by a ``good'' constant.

\begin{lemma}\label{construction-A-bar-lemma}
Given $A\in \Lambda (\mu, \lambda, \tau)$ and a Lipschitz domain $\Omega$
such that diam$(\Omega)\le (1/4)$ and $0\in \Omega$.
There exists $\bar{A}\in \Lambda(\mu, \lambda_0, \tau_0)$
such that $\bar{A}=A$ on $\partial\Omega$ and $\bar{A}$
satisfies the condition (\ref{additional-assumption-1}),
where $\lambda_0\in (0,\lambda]$, $\tau_0$ and $C_1$ in (\ref{additional-assumption-1})
depend only on $\mu$, $\lambda$, $\tau$ and the Lipschitz
character of $\Omega$.
In addition, $(\bar{A})^* =\bar{A}$ if $A^*=A$.
\end{lemma}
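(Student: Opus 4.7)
The plan is to construct $\bar A$ as a mollification of $A$ at a scale comparable to $d(X):=\text{dist}(X,\partial\Omega)$. First I would produce a smooth periodic function $\tilde\delta:\mathbb{R}^d\setminus\partial\Omega\to(0,\infty)$ with $\tilde\delta(X)\simeq d(X)$ and $|\nabla^k\tilde\delta(X)|\le C_k\, d(X)^{1-k}$; this is the standard Whitney-decomposition construction applied to $\partial\Omega$ viewed as a compact subset of the torus $\mathbb{T}^d=\mathbb{R}^d/\mathbb{Z}^d$ (which is legitimate since $\Omega\subset[-1/4,1/4]^d$), and its constants depend only on $d$ and the Lipschitz character of $\Omega$. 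Fixing $\eta\in C_0^\infty(B(0,1/2))$ with $\int\eta=1$, I set
\[
\bar A(X)\;=\;\int_{\mathbb{R}^d}\eta(W)\,A\bigl(X+\tilde\delta(X)W\bigr)\,dW\qquad(X\notin\partial\Omega),
\]
and $\bar A(X)=A(X)$ for $X\in\partial\Omega$. Periodicity, pointwise ellipticity with the same constant $\mu$, and symmetry (whenever $A^*=A$) are immediate because each is preserved under the convex combination defining $\bar A$, and the $C^\lambda$ bound on $A$ applied under the integral yields $|\bar A(X)-A(X)|\le C\tilde\delta(X)^\lambda\le C\,d(X)^\lambda$.

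The main computational step is the gradient estimate. After the change of variables $Y=X+\tilde\delta(X)W$, one has $\bar A(X)=\int K(X,Y)\,A(Y)\,dY$ with $K(X,Y)=\tilde\delta(X)^{-d}\eta((Y-X)/\tilde\delta(X))$. Since $\int K(X,Y)\,dY\equiv 1$, its $X$-derivative has vanishing $Y$-integral, hence
\[
\partial_k\bar A(X)\;=\;\int\partial_{X_k}K(X,Y)\,[A(Y)-A(X)]\,dY.
\]
On the support of $K(X,\cdot)$ we have $|Y-X|\le\tilde\delta(X)/2$, so $|A(Y)-A(X)|\le C\tilde\delta(X)^\lambda$; a direct calculation using $|\nabla\tilde\delta|\le C$ gives $|\partial_{X_k}K(X,Y)|\le C\tilde\delta(X)^{-d-1}$ on that support, which has measure $\sim\tilde\delta(X)^d$. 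Multiplying the three bounds yields $|\nabla\bar A(X)|\le C\tilde\delta(X)^{\lambda-1}\le C_1\,d(X)^{\lambda-1}$, which is precisely (\ref{additional-assumption-1}) with $\lambda_0=\lambda$.

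It then remains to verify $\bar A\in C^{\lambda}(\mathbb{R}^d)$ globally, for which I would use a two-case split on $X_1,X_2$. If $\min_i d(X_i)\le 2|X_1-X_2|$, then both $d(X_i)\le 3|X_1-X_2|$, so combining $|\bar A(X_i)-A(X_i)|\le C\,d(X_i)^\lambda$ with $\|A\|_{C^\lambda}\le\tau$ gives $|\bar A(X_1)-\bar A(X_2)|\le C|X_1-X_2|^\lambda$ by the triangle inequality. Otherwise both $d(X_i)>2|X_1-X_2|$, and along the segment from $X_1$ to $X_2$ one has $d(X_t)\ge|X_1-X_2|$, so the gradient bound gives $|\nabla\bar A(X_t)|\le C|X_1-X_2|^{\lambda-1}$, integrating to $|\bar A(X_1)-\bar A(X_2)|\le C|X_1-X_2|^\lambda$. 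This places $\bar A$ in $\Lambda(\mu,\lambda,\tau_0)$ with $\tau_0$ and $C_1$ depending only on the allowed parameters. The hard part is not any single calculation but the bookkeeping: each estimate has to be arranged so that every constant depends only on $\mu,\lambda,\tau$ and the Lipschitz character of $\Omega$, with the Lipschitz-character dependence entering only through the Whitney construction of $\tilde\delta$.
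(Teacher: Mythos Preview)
Your proof is correct and takes a genuinely different route from the paper. The paper defines $\bar A$ as the \emph{harmonic extension} of $A|_{\partial\Omega}$: inside $\Omega$ each entry $\bar a_{ij}^{\alpha\beta}$ solves the Dirichlet problem $\Delta \bar a_{ij}^{\alpha\beta}=0$ in $\Omega$ with data $a_{ij}^{\alpha\beta}$ on $\partial\Omega$, and on $[-1/2,1/2]^d\setminus\overline{\Omega}$ it is harmonic with data $A$ on $\partial\Omega$ and $I$ on $\partial[-1/2,1/2]^d$ (to make the periodic extension continuous). Ellipticity follows from the maximum principle applied to $\bar a_{ij}^{\alpha\beta}\xi_i^\alpha\xi_j^\beta$, Hölder regularity is quoted from the solvability theory of the Dirichlet problem in Lipschitz domains with Hölder data, and the gradient bound comes from interior estimates for harmonic functions.

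Your variable-scale mollification is more self-contained: it avoids invoking the Dirichlet problem in Lipschitz domains, and it delivers $\lambda_0=\lambda$ directly, whereas the paper's harmonic-extension argument may force $\lambda_0<\lambda$ (the Hölder exponent for solutions of the Dirichlet problem in a Lipschitz domain is capped by a number depending on the Lipschitz constant). In fact your constants do not depend on the Lipschitz character of $\Omega$ at all---the Whitney regularization $\tilde\delta$ requires only that $\partial\Omega$ be closed, with constants depending only on $d$---so you obtain a slightly sharper statement than claimed. The paper's approach, on the other hand, is shorter to write down once the cited results are granted, and the harmonic extension is a natural canonical object. Either construction is perfectly adequate for the downstream use in Step Two.
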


\begin{proof}
By periodicity it suffices to define
$\bar{A}=(\bar{a}_{ij}^{\alpha\beta})$ 
on $[-1/2,1/2]^d$. This is done as follows.
On $\Omega$ we define $\bar{A}$ to be the Poisson extension of  $A$ on 
$\partial\Omega$; i.e., 
$\bar{a}_{ij}^{\alpha\beta}$
is harmonic in $\Omega$ and $\bar{a}_{ij}^{\alpha\beta}
=a_{ij}^{\alpha\beta}$ on $\partial\Omega$,
 for each $i,j,\alpha,\beta$.
On $[-1/2,1/2]^d\setminus \overline{\Omega}$, we define
$\bar{A}$ to be the harmonic function in 
$(-1/2,1/2)^d\setminus \overline{\Omega}$
with boundary data $\bar{A}=A$ on $\partial\Omega$ and
$\bar{A}=I$ on $\partial [-1/2,1/2]^d$.
Note that the latter boundary condition allows us to extend $\bar{A}$
to $\mathbb{R}^d$ by periodicity.

Since $\bar{a}_{ij}^{\alpha\beta}\xi_i^\alpha\xi_j^\beta$ is harmonic in $(-1/2,1/2)^d
\setminus \partial\Omega$, the ellipticity
condition (\ref{ellipticity})
for $\bar{A}$ follows readily from the maximum principle.
By the solvability of Laplace's equation in Lipschitz domains
with H\"older continuous data (see e.g. \cite{Kenig-1994}),
 there exists $\lambda_1\in (0,1)$, depending only on
the Lipschitz character of $\Omega$, such that
$\bar{A}\in C^{\lambda_0}(\overline{\Omega})$
and $\bar{A}\in C^{\lambda_0} ([-1/2,1/2]^d\setminus \Omega)$,
where $\lambda_0=\lambda$ if $\lambda<\lambda_1$, and $\lambda_0=\lambda_1$
if $\lambda\ge \lambda_1$.
It follows that $\bar{A}\in C^{\lambda_0}(\mathbb{R}^d)$.
Using the well known interior estimates for harmonic functions, one may also show that
$|\nabla \bar{A}(X)|\le C_1 \big\{ \text{dist} (X, \partial\Omega)\big\}^{\lambda_0-1}$
for $X\in [-3/4,3/4]^d\setminus \partial\Omega$,
where $C_1$ depends only on $\mu$, $\lambda$, $\tau$ and the Lipschitz
character of $\Omega$.
Thus we have proved that $\bar{A}\in \Lambda (\mu, \lambda_0, \tau)$
and satisfies the condition (\ref{additional-assumption-1}).
Clearly, $(\bar{A})^* =\bar{A}$ if $A^*=A$.
\end{proof}

Let $\theta\in C_0^\infty (-1/2,1/2)$ such that $0\le \theta\le 1$
and $\theta=1$ on $(-1/4,1/4)$.
Given $A\in \Lambda(\mu, \lambda, \tau)$ with $A^*=A$, 
define
\begin{equation}\label{approximation-3}
A^\rho (X) =\theta \left(\frac{\delta(X)}{\rho}\right) A (X)
+ \left[ 1- \theta \left(\frac{\delta(X)}{\rho}\right)\right] \bar{A}(X)
\end{equation}
for $X\in [-1/2,1/2]^d$, 
where $\rho\in (0,1/8)$, $\delta(X)=\text{dist}(X, \partial\Omega)$ and
$\bar{A} (X)$ is the matrix constructed in Lemma \ref{construction-A-bar-lemma}.
Extend $A^\rho$ to $\mathbb{R}^d$ by periodicity.
Clearly, $A^\rho$ satisfies the ellipticity condition (\ref{ellipticity})
and $(A^\rho)^* =A^\rho$.

\begin{lemma}\label{lemma-6.2}
Let $A^\rho$ be defined by (\ref{approximation-3}). Then 
\begin{equation}\label{I-6.2}
 \| A^\rho -\bar{A}\|_\infty \le C\rho^{\lambda_0} \quad \text{ and } \quad
 \| A^\rho-\bar{A}\|_{C^{0, \lambda_0}(\mathbb{R}^d)}
\le C,
\end{equation}
where $C$ depends only on $\mu$, $\lambda$, $\tau$ and the Lipschitz character of $\Omega$.
\end{lemma}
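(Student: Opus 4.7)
The plan is to write $A^\rho - \bar{A} = F \cdot G$ where $F(X) = \theta(\delta(X)/\rho)$ is the cutoff factor and $G(X) = A(X) - \bar{A}(X)$ is the difference, which vanishes on $\partial\Omega$ by the construction in Lemma~\ref{construction-A-bar-lemma}. The entire argument rests on controlling $G$ pointwise by a power of $\delta(X)$.

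First I would establish the key pointwise bound $|G(X)| \le C \delta(X)^{\lambda_0}$ for all $X \in \mathbb{R}^d$: picking $Y_0 \in \partial\Omega$ with $|X - Y_0| = \delta(X)$ and using $G(Y_0) = 0$ together with $A \in C^{0,\lambda}(\mathbb{R}^d)$ and $\bar{A} \in C^{0,\lambda_0}(\mathbb{R}^d)$ (with $\lambda_0 \le \lambda$), we get $|G(X)| = |G(X) - G(Y_0)| \le C\, \delta(X)^{\lambda_0}$. The $L^\infty$ estimate follows immediately: on the support of $F$ we have $\delta(X) \le \rho/2$, hence $\|A^\rho - \bar{A}\|_\infty \le \|F\|_\infty \cdot C(\rho/2)^{\lambda_0} \le C\rho^{\lambda_0}$.

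For the Hölder seminorm I would split into two regimes. If $|X - Y| \ge \rho$, the triangle inequality with the $L^\infty$ bound just obtained gives $|(FG)(X) - (FG)(Y)| \le 2C\rho^{\lambda_0} \le 2C|X-Y|^{\lambda_0}$. If $|X - Y| < \rho$, I would use the product rule estimate
\[
|(FG)(X) - (FG)(Y)| \le |F(X)-F(Y)|\,|G(X)| + |F(Y)|\,|G(X) - G(Y)|.
\]
Because $\delta$ is $1$-Lipschitz, $|F(X) - F(Y)| \le \|\theta'\|_\infty \rho^{-1} |X-Y|$, and on the support of $F$ the pointwise bound yields $|G(X)| \le C\rho^{\lambda_0}$, so the first term is at most $C\rho^{\lambda_0 - 1}|X-Y| = C\rho^{\lambda_0 - 1}|X-Y|^{1-\lambda_0}|X-Y|^{\lambda_0} \le C|X-Y|^{\lambda_0}$ since $|X-Y| < \rho$. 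The second term is controlled directly by $|F(Y)| \le 1$ and the global $C^{0,\lambda_0}$ norm of $G = A - \bar{A}$, which is a good constant by Lemma~\ref{construction-A-bar-lemma}.

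The only subtle point to keep track of is ensuring that the appearance of $\rho^{\lambda_0 - 1}$ is absorbed correctly by $|X-Y|^{1-\lambda_0}$ in the near regime; this is automatic from the case split at scale $\rho$, and is really the whole content of the Hölder estimate. No step should present a serious obstacle provided we verify at the outset that $\bar{A}$ inherits global $C^{0,\lambda_0}$ regularity (which we already have from the construction) and that $A$ is itself at least $C^{0,\lambda_0}$, so that the distance-to-boundary bound on $G$ is available.
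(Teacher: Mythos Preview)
Your proof is correct and follows essentially the same route as the paper: the key pointwise bound $|A(X)-\bar A(X)|\le C\,\delta(X)^{\lambda_0}$ via a nearest boundary point, the same $L^\infty$ conclusion on the support of the cutoff, and the same case split at scale $\rho$ with the product-rule decomposition for the H\"older seminorm. The only cosmetic tightening needed is in the near-regime first term: you invoke ``on the support of $F$'' to bound $|G(X)|$, but $X$ itself need not lie there; the paper handles this by first reducing to the case $\min(\delta(X),\delta(Y))\le\rho$ (else $H^\rho(X)=H^\rho(Y)=0$), which together with $|X-Y|<\rho$ forces $\delta(X)\lesssim\rho$ anyway, so your estimate goes through unchanged.
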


\begin{proof}
Let $H^\rho =A^\rho-\bar{A}$.
Given $X\in [-1/2,1/2]^d$, let $P\in \partial\Omega$ such that
$|X-P|=\delta(X)$. Since $A(P)=\bar{A}(P)$, we have
$$
|A(X)-\bar{A}(X)|
\le |A(X)-A(P)| +|\bar{A}(P)-\bar{A}(X)|
\le C |X-P|^{\lambda_0}=C \big\{ \delta(X)\big\}^{\lambda_0}.
$$
It follows that
$$
|H^\rho(X)| \le C \theta (\rho^{-1}\delta(X)) \big\{ \delta(X)\big\} ^{\lambda_0}
= C\theta (\rho^{-1} \delta(X)) \big\{ \rho^{-1} \delta(X)\big\}^{\lambda_0} \rho^{\lambda_0}
\le C \rho^{\lambda_0}.
$$
This gives $\| A^\rho -A\|_\infty \le C\rho^{\lambda_0}$.

Next we show $|H^\rho (X)-H^\rho (Y)|\le C |X-Y|^{\lambda_0}$ for any $X,Y\in \mathbb{R}^d$.
Since $\|H^\rho\|_\infty \le C\rho^{\lambda_0}$, we may assume that
$|X-Y|\le \rho$.
Note that $H^\rho=0$ on $[-1/2, 1/2]^d\setminus [-3/8,3/8]^d$. 
Thus it is enough to consider the case where
$X, Y\in [-1/2,1/2]^d$.
We may further assume that $\delta(X)\le \rho$ or $\delta (Y)\le \rho$.
For otherwise, $H^\rho(X)=H^\rho (Y)=0$ and there is nothing to show.
Finally, suppose that $\delta(Y)\le \rho$. Then
$$
\aligned
|H^\rho(X)-H^\rho (Y)|
 & \le 
\theta (\rho^{-1} \delta(X)) |\big(A(X)-\bar{A}(X)) -( A(Y)-\bar{A}(Y)\big)|\\
& \qquad\qquad +|A(Y)-\bar{A}(Y)| |\theta (\rho^{-1} \delta(X)) -\theta (\rho^{-1} \delta (Y))|\\
& 
\le C|X-Y|^{\lambda_0}
+ C \big\{ \delta (Y)\big\}^{\lambda_0} |X-Y|\cdot \rho^{-1}\\
& \le C|X-Y|^{\lambda_0} +C\rho^{\lambda_0-1} |X-Y|\\
&\le C |X-Y|^{\lambda_0}.
\endaligned
$$
The proof for the case $\delta(X)\le \rho$ is the same.
\end{proof}

It follows from Lemma \ref{lemma-6.2} that for $\rho\in (0,1/4)$,
\begin{equation}\label{I-7.2}
\| A^\rho -\bar{A}\|_{C^{\lambda_0/2}(\mathbb{R}^d)}
\le C\rho^{\lambda_0/2}.
\end{equation}
Since $A^\rho=A=\bar{A}$ on $\partial\Omega$, we may deduce from 
Theorem \ref{operator-approximation-theorem} that
\begin{equation}\label{I-6.3}
\| \mathcal{K}_{A^\rho} -\mathcal{K}_{\bar{A}}\|_{L^2\to L^2}
\le C \| A^\rho-\bar{A}\|_{C^{\lambda_0/2}(\mathbb{R}^d)}
\le C \rho^{\lambda_0/2}.
\end{equation}
for any $\rho\in (0,1/4)$.
Note that by Lemma \ref{construction-A-bar-lemma} and Theorem \ref{step-one-theorem},
 the operator $(1/2)I+\mathcal{K}_{\bar{A}}$
is invertible on $L^2_0(\partial\Omega, \mathbb{R}^m)$ and $\|((1/2)I +\mathcal{K}_{\bar{A}})^{-1}\|
_{L_0^2\to L_0^2}\le C$.
Write $$
(1/2)I +\mathcal{K}_{A^\rho} =(1/2)I +\mathcal{K}_{\bar{A}}
+(\mathcal{K}_{A^\rho}-\mathcal{K}_{\bar{A}}).
$$
In view of (\ref{I-6.3}), one may choose $\rho>0$ depending only on $\mu$, $\lambda$,
$\tau$ and the Lipschitz character of $\Omega$ so that
$$
\| ((1/2)I +\mathcal{K}_{\bar{A}})^{-1} (\mathcal{K}_{A^\rho}-\mathcal{K}_{\bar{A}})\|
_{L_0^2\to L_0^2} \le 1/2.
$$
It follows that
$(1/2)I+\mathcal{K}_{A^\rho}$
is invertible on $L^2_0(\partial\Omega, \mathbb{R}^m)$ and 
$$
\|((1/2)I +\mathcal{K}_{A^\rho})^{-1}\|
_{L^2_0\to L^2_0}\le 2
\|((1/2)I +\mathcal{K}_{\bar{A}})^{-1}\|
_{L^2_0\to L_0^2}\le 2C.
$$
Similar arguments show that  it is possible to choose $\rho$ depending only on
$\mu$, $\lambda$, $\tau$ and the Lipschitz character of $\Omega$
such that $-(1/2)I +\mathcal{K}^*_\rho: L^2(\partial\Omega, \mathbb{R}^m)
\to
L^2(\partial\Omega, \mathbb{R}^m)
$
and $\mathcal{S}_{A^\rho}: L^2(\partial\Omega, \mathbb{R}^m) \to
W^{1,2} (\partial\Omega, \mathbb{R}^m)$
are invertible and the operator norms of their inverses are bounded
by a ``good'' constant.
Let $\widetilde{A}=A^\rho$.
Note that if $\text{dist}(X, \partial\Omega)\le (1/4)\rho$,
$A^\rho(X)=A(X)$. 
This completes Step Two.

\bigskip

Step Three, which is given in the following lemma,
 involves a perturbation argument.

\begin{lemma}\label{perturbation-lemma}
Let $A^0=(a_{ij}^{\alpha\beta}), A^1
=(b_{ij}^{\alpha\beta})\in \Lambda(\mu, \lambda, \tau)$.
Let $\Omega$ be a bounded Lipschitz domain.
Suppose that $A^0=A^1$ in $\{ X\in \Omega: \text{dist}(X,\partial\Omega)
\le c_0r_0\}$ for some $c_0>0$, where $r_0=\text{diam}(\Omega)$.
Assume that $\mathcal{L}^0 =-\text{div}(A^0\nabla )$ has the Rellich property
in $\Omega$ with constant $C_0$.
Then $\mathcal{L}^1=-\text{div}(A^1\nabla)$
has the Rellich property in $\Omega$
with constant $C_1$, where $C_1$ depends only on $d$, $m$,
$\mu$, $\lambda$, $\tau$, $c_0$, $C_0$ and the Lipschitz character of $\Omega$.
\end{lemma}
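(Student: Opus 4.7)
My plan is to reduce $\mathcal{L}^1$ to $\mathcal{L}^0$ by the decomposition $u=v+w$, where $v$ solves $\mathcal{L}^0(v)=0$ in $\Omega$ (so the Rellich hypothesis applies) and $w$ is an explicit volume potential that absorbs the difference $A^0-A^1$ inside $\Omega$. Given $u$ with $\mathcal{L}^1(u)=0$ in $\Omega$ and $(\nabla u)^*\in L^2(\partial\Omega)$, set $F_i^\beta:=(a_{ij}^{\beta\alpha}-b_{ij}^{\beta\alpha})\,\partial_j u^\alpha$, extended by zero outside $\Omega$. By hypothesis, $A^0\equiv A^1$ on $\{X\in\Omega:\mathrm{dist}(X,\partial\Omega)\le c_0r_0\}$, so $F$ is supported in the interior set $K:=\{X\in\Omega:\mathrm{dist}(X,\partial\Omega)>c_0r_0\}$, and $\mathcal{L}^0(u)=-\mathrm{div}(F)$. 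I define
\begin{equation*}
w^\beta(X)=\int_K \frac{\partial}{\partial y_i}\Gamma_{A^0}^{\beta\gamma}(X,Y)\,F_i^\gamma(Y)\,dY,
\end{equation*}
obtained by formally writing $w=\Gamma_{A^0}*(-\mathrm{div}\,F)$ and integrating by parts (legitimate since $F$ has compact support in $\Omega$). Then $\mathcal{L}^0(w)=-\mathrm{div}(F)$ in $\mathbb{R}^d$, so $v:=u-w$ satisfies $\mathcal{L}^0(v)=0$ in $\Omega$.

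Because $F\equiv0$ in a boundary neighborhood of width $c_0r_0$, the function $w$ is an $\mathcal{L}^0$-solution there and is smooth up to $\partial\Omega$. For $X$ within distance $c_0r_0/2$ of $\partial\Omega$ and $Y\in K$ one has $|X-Y|\ge c_0r_0/2$, so the pointwise bounds $|\nabla_Y\Gamma_{A^0}(X,Y)|\le C|X-Y|^{1-d}$ and $|\nabla_X\nabla_Y\Gamma_{A^0}(X,Y)|\le C|X-Y|^{-d}$ from (\ref{size-estimate}) and (\ref{2-derivative-estimate}), together with Cauchy--Schwarz, $|K|\le Cr_0^d$, and $\sigma(\partial\Omega)\le Cr_0^{d-1}$, yield
\begin{equation*}
\|w\|_{L^2(\partial\Omega)}\le Cr_0^{1/2}\|F\|_{L^2(\Omega)},\qquad \|\nabla w\|_{L^2(\partial\Omega)}\le Cr_0^{-1/2}\|F\|_{L^2(\Omega)},
\end{equation*}
with $\|F\|_{L^2(\Omega)}\le C(\mu)\|\nabla u\|_{L^2(\Omega)}$. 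Smoothness of $w$ at the boundary also gives $(\nabla w)^*\in L^\infty(\partial\Omega)$, so $(\nabla v)^*\in L^2(\partial\Omega)$ and $\nabla v$ exists nontangentially, and $v$ is admissible for the Rellich property of $\mathcal{L}^0$.

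Since $A^0=A^1$ on $\partial\Omega$, the conormal derivatives of $u$ with respect to $\nu^0$ and $\nu^1$ coincide on $\partial\Omega$, so $\partial v/\partial\nu^0=\partial u/\partial\nu^1-\partial w/\partial\nu^0$ and $\nabla_{tan}v=\nabla_{tan}u-\nabla_{tan}w$. Applying the Rellich property of $\mathcal{L}^0$ to $v$, triangle inequality, and the estimates on $w$ give
\begin{equation*}
\|\nabla u\|_{L^2(\partial\Omega)}\le C_0\|\partial u/\partial\nu^1\|_{L^2(\partial\Omega)}+Cr_0^{-1/2}\|\nabla u\|_{L^2(\Omega)},
\end{equation*}
and the same inequality with $\|\partial u/\partial\nu^1\|_2$ replaced by $\|\nabla_{tan}u\|_2$. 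To eliminate the solid term I apply Green's identity for $\mathcal{L}^1$: using $A^1$-ellipticity, $\int_{\partial\Omega}\partial u/\partial\nu^1\,d\sigma=0$, and Poincar\'e on the connected surface $\partial\Omega$ (applied to $u-\bar u$),
\begin{equation*}
\mu\|\nabla u\|_{L^2(\Omega)}^2\le\int_{\partial\Omega}\!\big(\partial u/\partial\nu^1\big)(u-\bar u)\,d\sigma\le Cr_0\|\partial u/\partial\nu^1\|_2\,\|\nabla_{tan}u\|_2.
\end{equation*}
Substituting $\|\nabla u\|_{L^2(\Omega)}\le Cr_0^{1/2}\|\partial u/\partial\nu^1\|_2^{1/2}\|\nabla u\|_{L^2(\partial\Omega)}^{1/2}$ into the previous display, the $r_0^{\pm 1/2}$ factors cancel exactly, and AM-GM absorbs $\|\nabla u\|_{L^2(\partial\Omega)}$ on the right, producing $\|\nabla u\|_{L^2(\partial\Omega)}\le C_1\|\partial u/\partial\nu^1\|_{L^2(\partial\Omega)}$. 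The regularity-type estimate $\|\nabla u\|_{L^2(\partial\Omega)}\le C_1\|\nabla_{tan}u\|_{L^2(\partial\Omega)}$ follows by the identical absorption.

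I expect the only delicate point to be the $r_0$-bookkeeping in the kernel estimates for $w$. The precise dyadic cancellation between $r_0^{-1/2}$ (kernel) and $r_0^{+1/2}$ (Green's identity) relies crucially on the boundary layer where $A^0=A^1$ having width proportional to $r_0$, which is exactly what the hypothesis $c_0r_0$ provides; without scale-invariant separation, the absorption would fail. All other steps are essentially transplants of the constant-coefficient perturbation argument, and the constant $C_1$ depends on $C_0$, $c_0$, $\mu$, $\lambda$, $\tau$ and the Lipschitz character of $\Omega$, as required.
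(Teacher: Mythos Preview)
Your proof is correct and follows essentially the same strategy as the paper's: decompose $u$ into an $\mathcal{L}^0$-solution plus a volume potential supported away from $\partial\Omega$, apply the Rellich hypothesis to the former, bound the latter via the kernel estimates (\ref{size-estimate})--(\ref{2-derivative-estimate}), and close with Green's identity (\ref{Green-estimate}) and a Cauchy/AM--GM absorption. The only organizational difference is that the paper first multiplies by a cutoff $\varphi$ to form $\bar u=\varphi(u-u_\Omega)$ and then invokes the representation formula (\ref{Green-representation-formula-1}), whereas you work with $u$ directly and write $\mathcal{L}^0(u)=-\operatorname{div}\big((A^0-A^1)\nabla u\big)$; your route is slightly cleaner since it avoids the cutoff, but the estimates and the absorption step are identical.
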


\begin{proof}
Suppose that $\mathcal{L}^1 (u)=0$ in $\Omega$, $(\nabla u)^*\in L^2(\partial\Omega)$
and $\nabla u$ exists n.t. on $\partial\Omega$.
Let $\varphi\in C_0^\infty(\mathbb{R}^d)$ such that $|\nabla \varphi|\le Cr_0^{-1}$,
$\varphi=1$ on $\{ X\in \mathbb{R}^d: \text{dist}(X,\partial\Omega)\le (1/4)c_0r_0\}$
and
$\varphi=0$ on $\{ X\in \mathbb{R}^d: \text{dist}(X,\partial\Omega)\ge (1/2)c_0r_0\}$.
Let $\bar{u}=\varphi(u-E)$, where $E=u_\Omega$ is the average of $u$ over $\Omega$.
Note that 
$$
\mathcal{L}^0 (\bar{u})
=-\partial_i \big\{ a_{ij}^{\alpha\beta} (\partial_j \varphi) (u-E)^\beta\big\}
-a_{ij}^{\alpha\beta} (\partial_i\varphi)(\partial_j u),
$$
where we have used the fact that $\mathcal{L}^0(u)=\mathcal{L}^1 (u)=0$ on 
$\{ X\in \Omega: \text{dist}(X,\partial\Omega)<c_0r_0\}$.
It follows from (\ref{Green-representation-formula-1}) that
\begin{equation}\label{I-6.7}
\bar{u}(X)=\mathcal{S}_{A^0}
\left(\frac{\partial \bar{u}}{\partial \nu_{A^0}}\right)
-\mathcal{D}_{A^0} (\bar{u}) +v(X)=w(X)+ v(X),
\end{equation}
where $v$ satisfies
\begin{equation}\label{I-6.8}
\aligned
|\nabla v(X)|
&\le C \int_\Omega
|\nabla_X\nabla_Y \Gamma_{A^0}(X,Y)||\nabla \varphi||u-E|\,dY\\
& \qquad\qquad 
+C \int_\Omega |\nabla_X \Gamma_{A^0} (X,Y)||\nabla\varphi| |\nabla u|\, dY.
\endaligned
\end{equation}
This, together with (\ref{size-estimate}) and 
(\ref{2-derivative-estimate}), implies that
if $X\in \Omega$ and $\text{dist}(X, \partial\Omega)
\le (1/5)c_0r_0$,
\begin{equation}\label{estimate-of-v}
|\nabla v(X)|^2
  \le  \frac{C}{r_0^d}
\int_\Omega |\nabla u|^2\, dY
\le Cr_0^{1-d} \|\frac{\partial u}{\partial\nu_{A^1}}\|_2 \|\nabla_{tan} u\|_2,
\end{equation}
where we have used (\ref{Green-estimate}) for the last inequality.

Next, note that
$\mathcal{L}^0 (w)=0$ in $\Omega$, where $w=\bar{u}-v$.
Using (\ref{estimate-of-v}) and the assumption $(\nabla u)^*\in L^2
(\partial\Omega)$ as well as the $L^\infty$ gradient estimate
(\ref{gradient-estimate}), we may deduce that
$(\nabla w)^*\in L^2(\partial\Omega)$ and
$\nabla w$ exists n.t. on $\partial\Omega$.
Since $\mathcal{L}^0$ has the Rellich property, this implies that
\begin{equation}\label{I-6.9}
\aligned
\|\nabla w\|_2  & \le C_0 \|\frac{\partial w}{\partial \nu_{A^0}}\|_2
\le C\big\{  \| \frac{\partial u}{\partial \nu_{A^1}} \|_2 
+\|\nabla v\|_2\big\} \\
&\le C \left\{ 
\| \frac{\partial u}{\partial \nu_{A^1}} \|_2 
+\|\frac{\partial u}{\partial\nu_{A^1}}\|_2^{1/2} \|\nabla_{tan} u\|_2^{1/2}\right\},
\endaligned
\end{equation}
where we used (\ref{estimate-of-v}) in the last inequality.
Using (\ref{estimate-of-v}) again, we obtain
\begin{equation}\label{I-10}
\aligned
\| \nabla u \|_2
& \le C \big\{
\|\nabla w \|_2
+\| \nabla v \|_2\big\}\\
&\le C \left\{
\|\frac{\partial u}{\partial \nu_{A^1}}\|_2
+
\|\frac{\partial u}{\partial\nu_{A^1}}\|_2^{1/2} \|\nabla_{tan} u\|_2^{1/2}
\right\}.
\endaligned
\end{equation}
The desired estimate $\|\nabla u\|_2 \le C\|\frac{\partial u}{\partial \nu_{A^1}}\|_2$
follows readily from (\ref{I-10}) by the Cauchy inequality with an $\varep$.
The proof of $\|\nabla u\|_2 \le C\|\nabla_{tan} u\|_2$ is similar.
\end{proof}

Finally we give the proof of Theorem \ref{Holder-continuous-Rellich}.

\noindent{\bf Proof of Theorem \ref{Holder-continuous-Rellich}}.
By Step Two there exists $\widetilde{A}\in \Lambda (\mu, \lambda_0, \tau_0)$
such that $\widetilde{A}=A$ in $\{ X\in \mathbb{R}^d: \text{dist}(X, \partial\Omega)\le c\}$
and $(1/2)I +\mathcal{K}_{\widetilde{A}}: L^2(\partial\Omega, \mathbb{R}^m)
\to L^2(\partial\Omega, \mathbb{R}^m)$, 
$\mathcal{S}_{\widetilde{A}}: L^2(\partial\Omega, \mathbb{R}^m)\to
W^{1,2} (\partial\Omega, \mathbb{R}^m)$
are invertible.
Moreover, the operator norms of these inverses are bounded by
a ``good'' constant $C$.
It follows that the $L^2$ Neumann and regularity problems for
$\mathcal{L}^{\widetilde{A}} (u)=0$ in $\Omega$
are uniquely solvable
and the solutions satisfy
$\|(\nabla u)^*\|_2 \le C\|\frac{\partial u}{\partial \nu}\|_2$
and $\|(\nabla u)^*\|_2 \le C\|\nabla_{tan} u\|_2$ with ``good''
constant $C$.
In particular the operator $\mathcal{L}^{\widetilde{A}}$
has the Rellich property in $\Omega$ with a ``good'' constant $C$.
By Lemma \ref{perturbation-lemma} this implies that
$\mathcal{L}$ has the Rellich property in $\Omega$ with a ``good'' constant $C$.
The proof is complete.
\qed

\section{Rellich estimates for large scales}

Let $\psi$ be a Lipschitz function on $\mathbb{R}^{d-1}$ such that $\psi (0)=0$ and
$\|\nabla \psi\|_\infty\le M$.
Let $D(r)$ and $\Delta (r)$ be defined as in (\ref{definition-of-Z}).
In this section we establish the following.

\begin{thm}\label{periodic-Rellich-1}
Let $\mathcal{L}=-\text{div}(A\nabla)$
with $A\in \Lambda(\mu, \lambda,\tau)$ and $A^*=A$.
Assume that $A\in C^2 (\mathbb{R}^d)$. Suppose that 
$\mathcal{L}(u)=0$ in $D(8r)$ for some $r>0$,
where $u\in C^2({D(8r)})$, 
$(\nabla u)^*_{D(8r)} \in L^2 (\Delta (6r))$ and $\nabla u$ exists n.t. on $\Delta (6r)$.
Then
\begin{equation}\label{periodic-Rellich-estimate}
\aligned
\int_{\Delta (r)} |\nabla u|^2\, d\sigma
 & \le C\int_{\Delta(4r)} |\frac{\partial u}{\partial \nu} |^2\, d\sigma
+\frac{C}{r} \int_{D(4r)}
|\nabla u|^2\, dX,\\
\int_{\Delta (r)} |\nabla u|^2\, d\sigma
 & \le C\int_{\Delta(4r)} |\nabla_{tan} u |^2\, d\sigma
+\frac{C}{r} \int_{D(4r)}
|\nabla u|^2\, dX,
\endaligned
\end{equation}
where $C$ depends only on $\mu$, $\lambda$, $\tau$ and $M$.
\end{thm}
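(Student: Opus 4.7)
The plan is to split the argument into a small-scale regime and a large-scale regime; only the latter will require periodicity. For $r$ bounded above by an absolute constant depending only on $M$, the estimates in (\ref{periodic-Rellich-estimate}) will follow directly from the Rellich property established in Section~6. Specifically, Theorem \ref{Holder-continuous-Rellich} applied to the Lipschitz domains $Z(tr)$ for $t\in(1,2)$, combined with the integration-in-$t$ trick used in the proof of Lemma \ref{local-interior-exterior-lemma}, yields (\ref{periodic-Rellich-estimate}) in this range with a good constant (the bulk term is automatically of the desired form $Cr^{-1}\int_{D(4r)}|\nabla u|^2$).

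For $r$ large, I would first cover $\Delta(r)$ by a bounded-overlap family of surface balls $\{\Delta(P_k,\rho_0)\}$, $\rho_0$ a fixed small constant depending only on $M$, and apply the small-scale estimate on each patch. Summing the local estimates yields
\begin{equation*}
\int_{\Delta(r)} |\nabla u|^2 \, d\sigma
\le C \int_{\Delta(2r)} |\tfrac{\partial u}{\partial \nu}|^2\, d\sigma
+ C \int_{L_{2r}} |\nabla u|^2\, dX,
\end{equation*}
where $L_{2r} = \{(x',x_d) : |x'|<2r,\ \psi(x')<x_d<\psi(x')+1\}$ is the boundary layer of thickness one period, together with the analogous inequality with $|\nabla_{tan} u|^2$ in place of $|\tfrac{\partial u}{\partial\nu}|^2$. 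The problem thus reduces to establishing the boundary-layer estimate
\begin{equation*}
\int_{L_{2r}} |\nabla u|^2\, dX
\le C \int_{\Delta(4r)} |\tfrac{\partial u}{\partial \nu}|^2\, d\sigma
+ \frac{C}{r} \int_{D(4r)} |\nabla u|^2\, dX,
\end{equation*}
and a parallel inequality with tangential-gradient data on the right.

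This boundary-layer estimate is the heart of the argument and is precisely where the periodicity of $A$ enters. I plan to invoke the Rellich-type integral identities for $x_d$-periodic operators developed in \cite{kenig-shen-1}. The key point is that, in place of integrating by parts against $\partial_d u$ (which would generate uncontrollable terms containing $\nabla A$), one uses the finite difference $Q(u)(x',x_d) = u(x',x_d+1) - u(x',x_d)$; the $\mathbb{Z}^d$-periodicity of $A$ forces $\mathcal{L}(Q(u))=0$, so $Q(u)$ is itself a solution, and the identities involve no derivatives of $A$. Multiplying such an identity by a cutoff $\varphi$ adapted to $D(4r)$ with $|\nabla\varphi|\le C/r$ and $\varphi\equiv 1$ on a neighborhood of $L_{2r}$, the boundary contributions are controlled by the conormal (respectively tangential-gradient) data on $\Delta(4r)$, while the interior contribution inherits the desired factor of $1/r$ from $|\nabla\varphi|$.

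The main obstacle will be carrying out this last step with a constant independent of $\|A\|_{C^2}$; the symmetry hypothesis $A^*=A$ is essential here, since without it the Rellich computation yields an unsigned boundary term that cannot be absorbed, and the identities from \cite{kenig-shen-1} break down. The $C^2$ regularity on $A$ is used only to justify the pointwise differentiations required to set up the $Q$-operator identities and should not enter the final constants, which will depend only on $\mu$, $\lambda$, $\tau$ and $M$, as required.
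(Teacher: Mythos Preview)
Your proposal is correct and follows essentially the same route as the paper: small scales via Theorem \ref{Holder-continuous-Rellich} plus the localization/integration-in-$t$ device of Lemma \ref{local-interior-exterior-lemma}, and large scales by reducing to a unit boundary-layer estimate controlled through the $Q$-operator Rellich identities of \cite{kenig-shen-1}. The only technical difference is that the paper, instead of introducing a cutoff $\varphi$ with $|\nabla\varphi|\le C/r$, works on the domains $D(\rho)$ for $\rho\in(r,2r)$ and integrates the resulting inequality in $\rho$ to convert the lateral boundary terms into the volume term $\frac{C}{r}\int_{D(3r)}|\nabla u|^2$; this is the same averaging mechanism you invoke, just implemented differently.
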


Observe that by Theorem \ref{Holder-continuous-Rellich} and the localization
techniques used in the proof of Lemma \ref{interior-exterior-Rellich},
the estimates in (\ref{periodic-Rellich-estimate}) hold
for $0<r\le 3$ with $C=C(\mu,\lambda, \tau, M)>0$. 
We will rely on the Rellich identities developed in \cite{kenig-shen-1}
treat the case $r>3$.

Let $Q$ be the difference operator defined by
\begin{equation}\label{definition-of-Q}
Q(f)(x^\prime,x_d)=f(x^\prime,x_d+1)-f(x^\prime,x_d).
\end{equation}
It is easy to verify that 
\begin{equation}\label{product-rule}
Q(fg)-Q(f)Q(g)=fQ(g)+g Q(f)
\end{equation}
and
\begin{equation}\label{property-of-Q}
\int_{D(r)} Q(f)\, dX
=\int_{\substack
{ |x^\prime| <r\\  Cr<  x_d<Cr+1 } }
f\, dX
-\int_{\substack
{|x^\prime|<r\\ \psi(x^\prime)<x_d<\psi(x^\prime)+1
}} 
f\, dX,
\end{equation}
where $C=10 \sqrt{d} (M+1)$.
Also note that since
$A(x^\prime,x_d+1)=A(x^\prime,x_d)$, we have $Q\mathcal{L}=
\mathcal{L}Q$.
In particular, $\mathcal{L}(Q(u))=0$ whenever $\mathcal{L}(u)=0$.

\begin{lemma}\label{Neumann-Rellich} Let $r>1$.
Under the same conditions on $A$ and $u$ as in Theorem \ref{periodic-Rellich-1}, we have
\begin{equation}\label{Neumann-Rellich-estimate}
\int_{\substack{|x^\prime|<r\\
\psi(x^\prime)<x_d <\psi(x^\prime)+1
}}
|\nabla u|^2\, dX
\le C\int_{\Delta(2r)}
\big|\frac{\partial u}{\partial\nu}\big|^2\, d\sigma
+\frac{C}{r}
\int_{D(3r)} |\nabla u|^2\, dX,
\end{equation}
where $C$ depends only on $\mu$ and $M$.
\end{lemma}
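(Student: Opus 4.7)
The estimate is established via a Rellich-type identity for $x_d$-periodic coefficients in which the classical derivative $\partial_d u$ is replaced by the finite difference $Q(u)(X)=u(X+e_d)-u(X)$. The crucial fact is that, by the periodicity of $A$, $Q$ commutes with $\mathcal{L}$, so $v:=Q(u)$ also solves $\mathcal{L}(v)=0$. Writing $u^+(X)=u(X+e_d)$ and $E(u):=a_{ij}^{\alpha\beta}\partial_i u^\alpha\partial_j u^\beta$, by ellipticity it suffices to bound $\int_{L_0(r)}E(u)\,dX$ with $L_0(r)=\{|x'|<r,\,\psi<x_d<\psi+1\}$. Applying \eqref{property-of-Q} on the truncated region $D^{(t)}(r)=\{|x'|<r,\,\psi<x_d<t\}$ for each $t\in[C_0 r/2,\,C_0 r-1]$, where $C_0=10\sqrt{d}(M+1)$, and averaging in $t$ produces a Lipschitz weight $\rho(x_d)$ with $\rho\equiv 1$ for $x_d\le C_0 r/2$, $\rho\equiv 0$ for $x_d\ge C_0 r-1$, $|\rho'|\le C/r$, and yields
$$
\int_{L_0(r)}E(u)\,dX \;\le\; \frac{C}{r}\int_{D(3r)}|\nabla u|^2\,dX \;+\; \Bigl|\int_{D(r)}\rho(x_d)\,Q(E(u))\,dX\Bigr|.
$$

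The key algebraic step is the decomposition
$$
Q(E(u)) \;=\; 2\,a_{ij}^{\alpha\beta}(X)\,\partial_i v^\alpha\,\partial_j u^\beta \;+\; E(v),
$$
obtained from $Q(A)\equiv 0$ (periodicity), the symmetry $A^*=A$, and the identity $u+u^+=2u+v$. For the $E(v)$-term, Caccioppoli's inequality for the solution $v$ together with the pointwise bound $|v(X)|\le\int_0^1|\nabla u(X+te_d)|\,dt$ gives $\|\nabla v\|_{L^2(D(r))}^2 \le (C/r^2)\|\nabla u\|_{L^2(D(3r))}^2$, so $\int_{D(r)}\rho\,E(v)$ is absorbed into the $(C/r)$ remainder. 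For the cross term $2\int\rho\,a\partial v\partial u$, introduce a tangential cutoff $\eta\in C_0^\infty(\{|x'|<r\})$, $\eta\equiv 1$ on $\{|x'|<r/2\}$, $|\nabla\eta|\le C/r$, and test $\mathcal{L}(u)=0$ against $\phi=\eta^2\rho v$; since $\eta\rho$ annihilates the top and side boundaries of $D(r)$ while $\rho\equiv 1$ on $\Delta(r)$,
$$
2\!\int_{D(r)}\!\eta^2\rho\,a_{ij}^{\alpha\beta}\partial_i v^\alpha\partial_j u^\beta\,dX \;=\; 2\!\int_{\Delta(r)}\!\eta^2\Bigl(\frac{\partial u}{\partial\nu}\Bigr)^{\!\alpha}\! v^\alpha\,d\sigma \;-\; 2\!\int_{D(r)}\! v^\alpha\,\partial_i(\eta^2\rho)\,a_{ij}^{\alpha\beta}\partial_j u^\beta\,dX.
$$
The boundary piece is bounded by $\|\partial u/\partial\nu\|_{L^2(\Delta(r))}\,\|v\|_{L^2(\Delta(r))}$; a Fubini argument from $v(P)=\int_0^1\partial_d u(P+te_d)\,dt$ yields $\|v\|_{L^2(\Delta(r))}^2\le C\int_{L_0(r)}|\nabla u|^2\,dX$, so Cauchy--Schwarz with a small parameter absorbs the $L_0$-contribution into the left-hand side and leaves $C\|\partial u/\partial\nu\|_{L^2(\Delta(2r))}^2$. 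The interior error is $\le(C/r)\|v\|_{L^2(D(r))}\|\nabla u\|_{L^2(D(r))}\le(C/r)\int_{D(3r)}|\nabla u|^2$, using $\|v\|_{L^2(D(r))}\le\|\nabla u\|_{L^2(D(2r))}$ (Fubini again). Ellipticity $E(u)\ge\mu|\nabla u|^2$ closes the estimate.

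\textbf{Main obstacle.} The critical insight is the algebraic decomposition $Q(E(u))=2a\partial v\partial u+E(v)$. The more obvious expansion $Q(E(u))=a_{ij}^{\alpha\beta}\partial_i Q(u)^\alpha\,\partial_j(u+u^+)^\beta$ ultimately requires controlling $\|u+u^+\|_{L^2(\Delta(r))}$, whose Poincar\'e bound is of order $r\,\|\nabla u\|_{L^2(D(3r))}$; coupled with the $1/r$ gain from $|\rho'|,|\nabla\eta|\le C/r$ in the cutoff error, this leaves an $O(1)$ remainder instead of the required $O(1/r)$. Substituting $v$ for $u+u^+$ via the decomposition reduces the relevant boundary trace to $\|v\|_{L^2(\Delta)}^2\le C\int_{L_0}|\nabla u|^2$---exactly what absorption requires, and a substantively periodic phenomenon with no counterpart for non-periodic $A$.
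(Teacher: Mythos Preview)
Your approach via the difference operator $Q(u)=u(X+e_d)-u(X)$ and the identity $Q(E(u))=2a\,\partial v\,\partial u+E(v)$ (with $v=Q(u)$) is exactly the mechanism the paper uses. Two points, however, do not go through as written.

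\textbf{The $E(v)$ term.} The Caccioppoli bound $\|\nabla v\|_{L^2(D(r))}^2\le (C/r^2)\|\nabla u\|_{L^2(D(3r))}^2$ is not available: $D(r)$ meets the Lipschitz graph $\Delta(r)$, and $v$ carries no boundary condition there, so the boundary term in the Caccioppoli computation is uncontrolled. The fix is simpler than what you propose: do not take the absolute value. One has
\[
\int_{L_0(r)}E(u)\,dX \;=\; -\int_{D(r)}\rho(x_d)\,Q(E(u))\,dX \;+\; (\text{top average}),
\]
so the contribution of $E(v)$ is $-\int\rho\,E(v)\le 0$ by ellipticity and may simply be discarded. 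This is precisely what the paper does.

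\textbf{The tangential cutoff $\eta$.} After the decomposition you must handle $\int_{D(r)}\rho\,a\,\partial v\,\partial u$, but testing $\mathcal{L}(u)=0$ against $\eta^2\rho\,v$ only produces $\int_{D(r)}\eta^2\rho\,a\,\partial v\,\partial u$. The leftover $\int_{D(r)}(1-\eta^2)\rho\,a\,\partial v\,\partial u$, restricted to the strip $L_0(r)\cap\{r/2<|x'|<r\}$, is bounded only by $C\int_{L_0(r)}|\nabla u|^2$ --- the same size as the quantity being estimated --- and cannot be absorbed. The paper sidesteps this by not cutting off in $x'$ at all: it works on $\Omega_\rho=D(\rho)$ for $\rho\in(r,2r)$, integrates by parts $\int_{\partial\Omega_\rho}\frac{\partial u}{\partial\nu}\cdot Q(u)\,d\sigma=\int_{\Omega_\rho}a\,\partial u\cdot\partial(Q(u))$, uses the sign of $E(v)$ as above, applies \eqref{property-of-Q}, and then \emph{averages in $\rho$}. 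The side and top pieces of $\partial\Omega_\rho$ become $(C/r)\int_{D(3r)}|\nabla u|^2$ after averaging, while the $\Delta(\rho)$ piece is handled exactly as you indicate via $\|Q(u)\|_{L^2(\Delta(\rho))}^2\le C\int_{L_0(\rho)}|\nabla u|^2$ and absorption.

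In short: the core idea is right, but replace ``vertical weight $\rho(x_d)$ plus tangential cutoff $\eta$'' by ``integrate over $D(\rho)$ and average in $\rho\in(r,2r)$'', and drop the $E(v)$ term by sign rather than by Caccioppoli.
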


\begin{proof} 
By approximating the domain $D(3r)$ from inside, 
 we may assume that $u\in C^2(\overline{D(3r)})$.
Let $\Omega_\rho = D(\rho)$ for $\rho\in (r,2r)$.
 It follows from integration by parts
that
$$
\aligned
\int_{\partial\Omega_\rho} \frac{\partial u}{\partial\nu} \cdot Q(u)\, d\sigma
& =
\int_{\Omega_\rho}
a_{ij}^{\alpha\beta} \, \frac{\partial u^\beta}{\partial x_j} 
\cdot Q\left(\frac{\partial u^\alpha}{\partial x_i}\right)\, dX\\
& =\frac12
\int_{\Omega_\rho}
a_{ij}^{\alpha\beta}
\left\{ \frac{\partial u^\beta}{\partial x_j} \cdot Q\left(\frac{\partial u^\alpha}{\partial x_i}\right)
+
Q\left(\frac{\partial u^\beta}{\partial x_j} \right)
\cdot \frac{\partial u^\alpha}{\partial x_i}\right\}\, dX\\
&=\frac12
\int_{\Omega_\rho}
a_{ij}^{\alpha\beta}
\left\{ Q\left(\frac{\partial u^\beta}
{\partial x_j}\cdot \frac{\partial u^\alpha}{\partial x_i}\right)
-
Q\left(\frac{\partial u^\beta}{\partial x_j}\right) 
\cdot Q\left(\frac{\partial u^\alpha}{\partial x_i}\right)\right\}dX\\
&\le 
\frac12
\int_{\Omega_\rho}
a_{ij}^{\alpha\beta}\, 
Q\left(\frac{\partial u^\beta}{\partial x_j}
\cdot \frac{\partial u^\alpha}{\partial x_i}\right)\, dX,
\endaligned
$$
where we have used the symmetry condition (\ref{symmetry}), (\ref{product-rule}) 
and the ellipticity condition (\ref{ellipticity}).
This, together with the periodicity condition on $A(X)$, gives
$$
-\int_{\Omega_\rho}
Q\left( a_{ij}^{\alpha\beta}
\frac{\partial u^\beta}{\partial x_j}\cdot \frac{\partial u^\alpha}{\partial x_i}\right)\, dX
\le -2 \int_{\partial\Omega_\rho}
\frac{\partial u}{\partial \nu}
\cdot Q(u)\, d\sigma.
$$
In view of (\ref{property-of-Q}), we obtain
\begin{equation}\label{Neumann-Rellich-1}
\aligned
& \mu\int_{\substack{
|x^\prime|<\rho\\
\psi(x)<x_d<\psi(x)+1
}}
|\nabla u|^2\, dX\\
&   \le -2\int_{\partial\Omega_\rho}
\frac{\partial u}{\partial \nu} \cdot Q(u)\, d\sigma
+\frac{1}{\mu}\int_{\substack{
|x^\prime|<\rho\\
C\rho<x_d<C\rho+1
}}
 |\nabla u|^2\, dX\\
&\le \delta \int_{\Delta(\rho)} |Q(u)|^2\, d\sigma
+\frac{1}{\delta} \int_{\Delta(\rho)} |\frac{\partial u}{\partial \nu}|^2\,d \sigma\\
&\quad\quad  +C\int_{\partial\Omega_\rho\setminus \Delta(\rho)} |\nabla u| |Q(u)|\, d\sigma
+C \int_{\substack{
|x^\prime|<\rho\\
C\rho<x_d<C\rho+1
}}|\nabla u|^2\, dX
\endaligned
\end{equation}
for $\rho\in (r,2r)$,
 where $\delta\in (0,1)$ and we have used the Cauchy inequality.

Next, using $|Q(u)(x^\prime,x_d)|^2 
\le \int_{x_d}^{x_d+1} |\frac{\partial u}{\partial s}(x^\prime,s)|^2\, ds$, we see that
\begin{equation}\label{Neumann-Rellich-1.5}
\int_{\Delta(\rho)} |Q(u)|^2\, d\sigma
\le C \int_{\substack{
|x^\prime|<\rho \\
\psi (x^\prime)< x_d < \psi(x^\prime) +1
}}
|\nabla u|^2\, dX
\end{equation}
and
\begin{equation}\label{Neumann-Rellich-2}
\aligned
& \int_{\partial\Omega_\rho \setminus \Delta(\rho)} |Q(u)|^2\, d\sigma\\
&\quad\le C \int_{\partial B(0,\rho)} \int_{\psi(x^\prime)}^{\psi(x^\prime)+\rho+1}
|\nabla u (x^\prime, s)|^2\, ds d\sigma
+C \int_{\substack{
|x^\prime|<\rho\\
C\rho<x_d <C\rho +1
}}
 |\nabla u|^2\, dX,
\endaligned
\end{equation}
where $C$ depends only on $M$.

Finally we choose $\delta>0$ in (\ref{Neumann-Rellich-1})
so small that $\delta C\le (1/2)\mu$.
In view of (\ref{Neumann-Rellich-1}), (\ref{Neumann-Rellich-1.5}) and (\ref{Neumann-Rellich-2}),
we obtain
\begin{equation}\label{Neumann-Rellich-3}
\aligned
& \int_{\substack{
|x^\prime|<r\\
\psi (x^\prime)< x_d< \psi (x^\prime)+1
}}
 |\nabla u|^2\, dX \\
&\le  C\int_{\Delta(2r)}
\big|\frac{\partial u}{\partial \nu}\big|^2\, d\sigma
+
C \int_{\partial B(0,\rho)} \int_{\psi(x^\prime)}^{\psi(x^\prime)+2r+1}
|\nabla u (x^\prime, s)|^2\, ds d\sigma\\
& \qquad
+C\int_{|x^\prime|<2r} |\nabla u(x^\prime, C\rho)|^2\, dx^\prime
+C \int_{\substack{
|x^\prime|<2r\\
C\rho<x_d <C\rho +1
}}
 |\nabla u|^2\, dX,
\endaligned
\end{equation}
The desired estimate follows by integrating both sides of (\ref{Neumann-Rellich-3})
with respect to $\rho$ over the interval $(r,2r)$.
\end{proof}

\begin{lemma}\label{regularity-Rellich} Let $r>1$.
Under the same conditions on $A$ and $u$, we have
\begin{equation}\label{regularity-Rellich-estimate}
\int_{\substack{
 |x^\prime|<r\\
\psi(x^\prime)<x_d <\psi(x^\prime)+1
}}
|\nabla u|^2\, dX
\le C\int_{\Delta(2r)}
\big|\nabla_{tan} u|^2\, d\sigma
+\frac{C}{r}
\int_{D(3r)} |\nabla u|^2\, dX,
\end{equation}
where $C$ depends only on $\mu$ and $M$.
\end{lemma}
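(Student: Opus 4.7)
The plan is to mirror the proof of Lemma \ref{Neumann-Rellich}, starting from the same $Q$-based integral identity, but replacing its Neumann-type boundary pairing by one involving only tangential derivatives of $u$.

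First, approximate $D(3r)$ from inside so that $u \in C^2(\overline{D(\rho)})$ for every $\rho\in(r,2r)$, and set $\Omega_\rho = D(\rho)$. The initial integration-by-parts computation, combined with the symmetry $A^* = A$, the product rule (\ref{product-rule}), and ellipticity (\ref{ellipticity}), reproduces the inequality (\ref{Neumann-Rellich-1}) verbatim:
\[
\mu\!\int_{\substack{|x'|<\rho\\ \psi(x')<x_d<\psi(x')+1}}\!\! |\nabla u|^2\,dX
\;\le\; -2\!\int_{\partial\Omega_\rho}\!\! \frac{\partial u}{\partial\nu}\cdot Q(u)\, d\sigma + \frac{1}{\mu}\!\int_{\substack{|x'|<\rho\\ C\rho<x_d<C\rho+1}}\!\! |\nabla u|^2\, dX.
\]
The new task is to bound the first boundary integral on the right, restricted to $\Delta(\rho)$, by $\int_{\Delta(2r)}|\nabla_{tan} u|^2$ rather than by $\int|\partial u/\partial\nu|^2$.

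To this end, on $\Delta(\rho)$ decompose the conormal derivative as
\[
n_i a_{ij}^{\alpha\beta}\partial_j u^\beta \;=\; n_i a_{ij}^{\alpha\beta} n_j n_k\partial_k u^\beta + n_i a_{ij}^{\alpha\beta}(\delta_{jk}-n_j n_k)\partial_k u^\beta,
\]
where the second term is pointwise bounded by $|\nabla_{tan} u|$. For the first, genuinely normal piece, perform a tangential integration by parts along $\Delta(\rho)$ using $\mathcal{L}(u)=0$, which lets one rewrite $\partial_d$-derivatives of the conormal flux as divergences of tangential fluxes plus $\nabla A$ error terms supported in the boundary layer; this is the $Q$-based Rellich identity for $x_d$-periodic operators from \cite{kenig-shen-1}. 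The net effect is an estimate
\[
\bigl|\int_{\Delta(\rho)} (\partial u/\partial\nu)\cdot Q(u)\,d\sigma\bigr| \;\le\; C\!\int_{\Delta(2r)}\!|\nabla_{tan} u|\,|Q(u)|\,d\sigma + \text{(boundary term on } \partial\Delta(\rho)) + \text{(solid error)},
\]
with the solid error supported in the boundary layer and controlled by $C\int_{D(3r)}|\nabla u|^2$. Apply Cauchy's inequality with a small $\delta>0$, and use (\ref{Neumann-Rellich-1.5}) to bound $\int_{\Delta(\rho)}|Q(u)|^2$ by the integral of $|\nabla u|^2$ over the boundary layer itself; for $\delta$ small this is absorbed into the left-hand side. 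The contributions from the top and lateral faces of $\Omega_\rho$, as well as the $\partial\Delta(\rho)$ boundary term, are handled by integrating $\rho$ over $(r,2r)$, producing a factor of $r^{-1}\int_{D(3r)}|\nabla u|^2\,dX$ exactly as in the proof of Lemma \ref{Neumann-Rellich}.

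The main obstacle is the tangential integration-by-parts step on the Lipschitz surface $\Delta(\rho)$: one must carry it out for the elliptic \emph{system} (\ref{ellipticity}) with merely $C^2$ $x_d$-periodic coefficients, arranging that every factor landing on $\Delta$ is of tangential type (so it can be estimated by $|\nabla_{tan} u|$) while the $\nabla A$ error terms and $\partial\Delta(\rho)$ boundary contributions end up as solid integrals of $|\nabla u|^2$ over $D(3r)$ that the $\rho$-averaging converts into the $r^{-1}\int_{D(3r)}|\nabla u|^2$ term on the right-hand side of (\ref{regularity-Rellich-estimate}), with constants depending only on $\mu$ and $M$.
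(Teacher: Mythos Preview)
Your proposal has the right overall architecture (same starting inequality, absorb via small $\delta$, average in $\rho$), but the core boundary manipulation is not on track. Decomposing $\partial u/\partial\nu$ on $\Delta(\rho)$ into a normal-normal piece $n_i a_{ij}^{\alpha\beta}n_j\,(n\!\cdot\!\nabla u^\beta)$ plus a tangential remainder does not help: the bad factor is $n\!\cdot\!\nabla u$, which lives on the surface and cannot be converted to tangential data by invoking $\mathcal{L}(u)=0$ in the interior. Your ``tangential integration by parts along $\Delta(\rho)$'' step is where the argument stalls, and the appearance of $\nabla A$ error terms is a warning sign, since the stated constant depends only on $\mu$ and $M$ and must not see $\|\nabla A\|$.

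The paper's device is different and cleaner. Since $\mathcal{L}(Q(u))=0$ as well, Green's identity swaps the pairing: $\int_{\partial\Omega_\rho}\frac{\partial u}{\partial\nu}\cdot Q(u)\,d\sigma=\int_{\partial\Omega_\rho} u\cdot\frac{\partial}{\partial\nu}Q(u)\,d\sigma$. Now the conormal sits on $Q(u)$, and by periodicity of $A$ in $x_d$ one writes
\[
n_i\, a_{ij}^{\alpha\beta}\,\partial_j Q(u^\beta)
= n_i\,\partial_{x_d}\!\left\{\int_0^1 a_{ij}^{\alpha\beta}(x',x_d+s)\,\partial_j u^\beta(x',x_d+s)\,ds\right\}.
\]
Because $\mathcal{L}(u)=0$ at every shifted point, the sum $\sum_i n_d\,\partial_i\{\cdots\}$ vanishes, so $n_i\partial_d$ may be replaced for free by the \emph{tangential} operator $n_i\partial_d-n_d\partial_i$. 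Integrating that by parts along $\partial\Omega_\rho$ moves it onto $u^\alpha$, producing a factor $|\nabla_{tan}u|$ on $\Delta(\rho)$ and an $\int_0^1|\nabla u(x',x_d+s)|\,ds$ factor that, after Cauchy with small $\delta$, lands back in the boundary layer and is absorbed. No $\nabla A$ terms are generated, and the remaining top/side contributions are disposed of by the $\rho$-average exactly as you describe. The step you are missing is this Green swap followed by the $\int_0^1$ representation; once you have it, the rest of your outline goes through.
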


\begin{proof}
Let $\Omega_\rho =D(\rho)$ for $\rho\in (r,2r)$. 
As in the proof of Lemma \ref{Neumann-Rellich},
 we have
\begin{equation}\label{regularity-Rellich-1}
\aligned
& \mu\int_{\substack{
|x^\prime|<\rho\\
\psi(x^\prime)<x_d<\psi(x^\prime)+1
}}
|\nabla u|^2\, dtdx\\
&   \le -2\int_{\partial\Omega_\rho}
\frac{\partial u}{\partial \nu} \cdot Q(u)\, d\sigma
+\frac{1}{\mu}\int_{\substack{
|x^\prime|<\rho\\
C\rho< x_d<C\rho +1
}}
|\nabla u|^2\, dX.
\endaligned
\end{equation}
To estimate the first term in the right-hand side of (\ref{regularity-Rellich-1}), we 
observe that
\begin{equation}\label{regularity-Rellich-2}
\aligned
& \int_{\partial\Omega_\rho} \frac{\partial u}{\partial \nu}\cdot Q(u)\, d\sigma
=\int_{\partial \Omega_\rho}
u\cdot \frac{\partial}{\partial\nu} \big\{ Q(u)\big\} \, d\sigma
=\int_{\partial \Omega_\rho}
u^\alpha \cdot n_i a_{ij}^{\alpha\beta} 
\frac{\partial }{\partial x_j} Q(u^\beta)\, d\sigma\\
&=\int_{\partial\Omega_\rho}
u^\alpha \cdot n_i \left\{ 
\int_0^1 \frac{\partial }{\partial s}
\left( a_{ij}^{\alpha\beta} (x^\prime,x_d+s) 
\frac{\partial u^\beta}{\partial x_j} (x^\prime, x_d+s)\right)\, ds\right\} d\sigma \\
&=\int_{\partial\Omega_\rho}
u^\alpha\cdot n_i 
\frac{\partial }{\partial x_d}
\left\{ \int_0^1 a_{ij}^{\alpha\beta} (x^\prime,x_d+s)
 \frac{\partial u^\beta}{\partial x_j}(x^\prime,x_d+s)\, ds \right\}
d\sigma\\
&=\int_{\partial\Omega_\rho}
u^\alpha\left( n_i\frac{\partial}{\partial x_d}
-n_{d} \frac{\partial}{\partial x_i}\right) 
\left\{ \int_0^1 a_{ij}^{\alpha\beta} (x^\prime,x_d+s) 
\frac{\partial u^\beta}{\partial x_j}(x^\prime,x_d+s)\, ds \right\}
d\sigma\\
&
=-\int_{\partial\Omega_\rho}
\left(n_i \frac{\partial}{\partial x_d}
-n_{d} \frac{\partial}{\partial x_i}\right)u^\alpha
\cdot 
\left\{ \int_0^1 a_{ij}^{\alpha\beta} (x,x_d+s) 
\frac{\partial u^\beta}{\partial x_j}(x,x_d+s)\, ds \right\}
d\sigma,
\endaligned
\end{equation}
where we have used the facts that $\mathcal{L}(Q(u))=0$ and
that $(n_i \frac{\partial}{\partial x_d} -n_{d} \frac{\partial}{\partial x_i})$ is 
a tangential derivative.
It follows that
\begin{equation}\label{regularity-Rellich-3}
\aligned
& \big|\int_{\partial\Omega_\rho} \frac{\partial u}{\partial \nu}\cdot Q(u)\, d\sigma\big|\\
& \le C\int_{\partial\Omega_\rho}
|\nabla_{tan}u (X) |\left\{ \int_0^1 |\nabla u(x^\prime,x_d+s)|ds\right\} d\sigma\\
&\le \delta \int_{\partial\Omega_\rho} \left\{ 
\int_0^1 |\nabla u(x^\prime,x_d+s)|^2\, ds\right\} d\sigma
+C_\delta \int_{\partial\Omega_\rho} |\nabla_{tan} u|^2\, d\sigma\\
&\le C\delta\int_{
\substack{
|x^\prime|<\rho\\
\psi(x^\prime)< x_d<\psi(x^\prime) +1
}}
|\nabla u|^2\, dX
+C_\delta \int_{\Delta(\rho)} |\nabla_{tan} u|^2\, d\sigma\\
& \quad\quad +C\int_{\partial\Omega_\rho\setminus \Delta(\rho)} |\nabla u|^2\, d\sigma
+C \int_{\partial\Omega_\rho\setminus \Delta(\rho)}
\left\{\int_0^1 |\nabla u(x^\prime,x_d+s)|^2\, ds \right\} d\sigma,
\endaligned
\end{equation}
where $\delta\in (0,1)$ and we have used the Cauchy inequality.

We now choose $\delta$ so that $C\delta<(1/4)\mu$.
 In view of (\ref{regularity-Rellich-1})
and (\ref{regularity-Rellich-3}), we obtain
\begin{equation}\label{regularity-Rellich-4}
\aligned
& 
\int_{
\substack{
|x^\prime|<r\\
\psi(x^\prime)< x_d<\psi(x^\prime) +1
}}
|\nabla u|^2\, dX
\\
& \le C\int_{\Delta(2r)} |\nabla_{tan} u|^2\, d\sigma
+C
\int_{
\substack{
|x^\prime|<2r\\
C\rho< x_d<C\rho +1
}}
|\nabla u|^2\, dX\\
& \quad\quad +C\int_{\partial\Omega_\rho\setminus \Delta(\rho)} |\nabla u|^2\, d\sigma
+C \int_{\partial\Omega_\rho\setminus \Delta(\rho)}
\left\{\int_0^1 |\nabla u(x^\prime,x_d+s)|^2\, ds \right\} d\sigma,
\endaligned
\end{equation}
for any $\rho\in (r,2r)$. Estimate (\ref{regularity-Rellich-estimate})
follows by integrating both sides of (\ref{regularity-Rellich-4})
with respect to $\rho$ over the interval $(r,2r)$.
\end{proof}

We now give the proof of Theorem \ref{periodic-Rellich-1}.

\noindent{\bf Proof of Theorem \ref{periodic-Rellich-1}.}
We may assume that $r>3$.
By covering $\Delta (r)$ with surface balls of small radius $c(M)$ on $\{ (x^\prime, \psi(x^\prime)): 
x^\prime\in \mathbb{R}^{d-1}\}$
and using the first inequality 
in (\ref{periodic-Rellich-estimate}) on each small surface ball, we obtain
\begin{equation}\label{p-R-1}
\int_{\Delta (r)}
|\nabla u|^2\, dX
\le C\int_{\Delta (2r)} \big|\frac{\partial u}{\partial \nu}\big|^2\, d\sigma
+ C\int_{\substack{
|x^\prime|<r+1\\
\psi (x^\prime)<x_d<\psi (x^\prime)+1
}} |\nabla u|^2\, dX.
\end{equation}
This, together with Lemma \ref{Neumann-Rellich}, gives
the first inequality in (\ref{periodic-Rellich-estimate}).
The second inequality in (\ref{periodic-Rellich-estimate})
follows from Lemma \ref{regularity-Rellich}
in a similar fashion.
\qed

\section{Proof of Theorems \ref{e-Dirichlet-theorem}, \ref{e-regularity-theorem}
and \ref{e-Neumann-theorem}}

Let $A\in \Lambda(\mu, \lambda, \tau)$ with $A^*=A$.
As we pointed out in the Introduction, by a simple rescaling argument, 
it suffices to prove
Theorems \ref{e-Dirichlet-theorem}, \ref{e-regularity-theorem}
and \ref{e-Neumann-theorem} for $\varep=1$, but with constant $C$
depending only on $d$, $m$, $\mu$, $\lambda$, $\tau$ and
the Lipschitz character of $\Omega$.
Moreover, note that the existence and uniqueness as well as representations by layer potentials 
of solutions to
the $L^2$ Dirichlet, regularity and
Neumann problems for $\mathcal{L}(u)=0$ in $\Omega$ 
were already given by Theorem \ref{local-solvability} and its proof.
As a result,
we only need to show that the operator norms of 
$((1/2)I+\mathcal{K}_A)^{-1}$ on $L_0^2(\partial\Omega, \mathbb{R}^m)$,
$(-(1/2)I +\mathcal{K}_A)^{-1}$ on $L^2(\partial\Omega, \mathbb{R}^m)$
and $\mathcal{S}_A^{-1}: W^{1,2}(\partial\Omega, \mathbb{R}^m)
\to L^2(\partial\Omega, \mathbb{R}^m)$
are bounded by a ``good'' constant.

To this end, 
for any $A\in \Lambda (\mu, \lambda, \tau)$, we choose
a sequence  $\{ A^k\}\subset\Lambda (\mu, \lambda/2, \eta)$, where
$\eta=\eta (\mu, \lambda, \tau)$, such that $A^k\in C^2(\mathbb{R}^d)$ and
$\| A^k -A\|_{C^{\lambda/2}(\mathbb{R}^d)} \to 0$.
By Theorem \ref{Rellich-imply-solvability} as well as its proof,
it follows from Theorem \ref{periodic-Rellich-1} by a simple
localization argument that $\mathcal{L}^k =-\text{div}(A^k\nabla)$
has the Rellich property in any Lipschitz
domain $\Omega$
 with constant $C(\Omega)$ depending only on $d$, $m$, $\mu$, $\lambda$, $\tau$
and the Lispchitz character of $\Omega$.
This implies that
the operator norms of $(\pm (1/2)I+\mathcal{K}_{A^k})^{-1}$ and
$\mathcal{S}_{A^k}^{-1}$ are bounded by a ``good'' constant $C_0$.
Since $\| \mathcal{K}_{A^k}-\mathcal{K}_A\|_{L^2\to L^2}
\to 0$ and $\| \mathcal{S}_{A^k}-\mathcal{S}_A\|_{L^2\to W^{1,2}} \to 0$
by Theorem \ref{operator-approximation-theorem}, we may conclude that
the operator norms of $(\pm (1/2)I+\mathcal{K}_{A})^{-1}$ and
$\mathcal{S}_{A}^{-1}$ are bounded by the same ``good'' constant $C_0$.
This completes the proof
of Theorems 
\ref{e-Dirichlet-theorem}, \ref{e-regularity-theorem}
and \ref{e-Neumann-theorem}.
\qed

\bibliography{ks2}

\small
\noindent\textsc{Department of Mathematics, 
University of Chicago, Chicago, IL 60637}\\
\emph{E-mail address}: \texttt{cek@math.uchicago.edu} \\

\noindent\textsc{Department of Mathematics, 
University of Kentucky, Lexington, KY 40506}\\
\emph{E-mail address}: \texttt{zshen2@email.uky.edu} \\

\noindent \today

\end{document}